\newtheorem{theorem}{Theorem}[subsection]
\newtheorem{lemma}[theorem]{Lemma}
\newtheorem{proposition}[theorem]{Proposition}
\newtheorem{remark}[theorem]{Remark}
\newtheorem{definition}[theorem]{Definition}
\newenvironment{proof}{\par $\triangleleft$}{$\triangleright$}
\begin{document}

\begin{center}

\Large \textbf{Multiplication operators on $L_p$ spaces and homological triviality of respective category of modules}\\[0.5cm]
\small {Norbert Nemesh}\\[0.5cm]

\end{center}
\thispagestyle{empty}

\begin{abstract}
We give complete characterisation of topologically injective (bounded below), topologically surjective (open mapping), isometric and coisometric (quotient mapping) multiplication operators between $L_p$ spaces defined on different $\sigma$-finite measure spaces. We prove that all such operators invertible from the left or from the right. As the consequence we prove that all objects of the category of $L_p$ spaces considered as left Banach modules over algebra of bounded measurable functions are metrically, extremelly and relatively projective, injective and flat.
\end{abstract}

\section{Preliminaries}

\subsection{Measure theoretic facts}

Let $(\Omega,\Sigma,\mu)$ be a measure space with $\sigma$-additive real valued measure. We say that $\Omega'\in\Sigma$ is an atom if $\mu(\Omega')>0$ and for every $E\in\Sigma$ such that $E\subset\Omega'$ either $\mu(E)=0$ or $\mu(\Omega'\setminus E)=0$. By $A(\Omega,\mu)$ we denote the set of atoms of $(\Omega,\Sigma,\mu)$. Now we present several standard facts from measure theory.

\begin{lemma}\label{AtomDescInSigmFinMeasSp} If $(\Omega,\Sigma,\mu)$ is a $\sigma$-finite measure space then all its atoms are of the finite measure.
\end{lemma}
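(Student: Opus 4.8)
The plan is to exploit $\sigma$-finiteness to cover $\Omega$ by countably many finite-measure sets and then show that the entire mass of the atom is concentrated in just one of them. So I would first fix a decomposition $\Omega=\bigcup_{n=1}^{\infty}\Omega_n$ with each $\mu(\Omega_n)<\infty$, which exists by the definition of a $\sigma$-finite measure space. Let $\Omega'$ be an arbitrary atom, so $\mu(\Omega')>0$ and for every measurable $E\subset\Omega'$ either $\mu(E)=0$ or $\mu(\Omega'\setminus E)=0$.

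Next I would restrict the covering to the atom, writing $\Omega'=\bigcup_{n=1}^{\infty}(\Omega'\cap\Omega_n)$. Each piece $\Omega'\cap\Omega_n$ is a measurable subset of $\Omega'$, so it is subject to the atom dichotomy. The key observation is that not all of these pieces can be null: by countable subadditivity $\mu(\Omega')\le\sum_{n=1}^{\infty}\mu(\Omega'\cap\Omega_n)$, so if every term vanished we would get $\mu(\Omega')=0$, contradicting $\mu(\Omega')>0$. Hence there is an index $n_0$ with $\mu(\Omega'\cap\Omega_{n_0})>0$.

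Now I would apply the atom property to the set $E=\Omega'\cap\Omega_{n_0}$. Since $\mu(E)>0$, the first alternative is excluded, so necessarily $\mu(\Omega'\setminus E)=0$, i.e.\ $\mu(\Omega'\setminus\Omega_{n_0})=0$. Finite additivity then gives $\mu(\Omega')=\mu(\Omega'\cap\Omega_{n_0})+\mu(\Omega'\setminus\Omega_{n_0})=\mu(\Omega'\cap\Omega_{n_0})\le\mu(\Omega_{n_0})<\infty$, which is exactly the claim.

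I do not expect any real obstacle here, as the statement is essentially a bookkeeping consequence of the definitions; the only point requiring a moment of care is the pigeonhole-type step that locates a single covering set carrying positive atomic mass, which I handle via countable subadditivity. One should only make sure the chosen covering consists of genuinely finite-measure sets (not merely an increasing exhaustion), but that is guaranteed by $\sigma$-finiteness, and if one prefers disjoint pieces they can be extracted in the standard way without affecting the argument.
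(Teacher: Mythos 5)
Your proof is correct and follows essentially the same route as the paper: both fix a countable covering of $\Omega$ by finite-measure sets, intersect it with the atom $\Omega'$, and apply the atom dichotomy to the pieces $\Omega'\cap\Omega_n$. The only difference is stylistic — the paper assumes $\mu(\Omega')=+\infty$ and derives a contradiction by forcing every piece to be null, while you argue directly by locating one piece of positive measure and showing it carries all of $\mu(\Omega')$; your version is if anything slightly cleaner, since it needs only subadditivity rather than a disjoint decomposition.
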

\begin{proof} Since $(\Omega,\Sigma,\mu)$ is $\sigma$-finite we have representaion $\Omega=\bigcup_{n\in\mathbb{N}} F_n$ as disjoint union of measurable sets of finite measure. Assume we have $\Omega'\in A(\Omega,\mu)$ of infinite measure. For $n\in\mathbb{N}$ define $\tilde{F}_n=F_n\cap \Omega_n\in\Sigma$. Fix $n\in\mathbb{N}$, then $\tilde{F}_n\subset\Omega'$ and either $\mu(\tilde{F}_n)=0$ or $\mu(\tilde{F}_n)=+\infty$. Since $\mu(\tilde{F}_n)\leq\mu(F_n)<+\infty$ we get $\mu(\tilde{F}_n)=0$. As $n\in\mathbb{N}$ is arbitrary we get $\mu(\Omega')=\mu(\bigcup_{n\in\mathbb{N}}\tilde{F}_n)=\sum_{n\in\mathbb{N}}\mu(\tilde{F}_n)=0$. Contradiction, so $\mu(\Omega')<+\infty$.
\end{proof}

\begin{lemma}\label{PureAtomSpDecomp} Let $(\Omega,\Sigma,\mu)$ be a purely atomic measure space, then there exist pairwise disjoint family of atoms $\{\Omega_\lambda:\lambda\in\Lambda\}$ such that
$\Omega=\bigcup_{\lambda\in\Lambda}\Omega_\lambda$. If $(\Omega,\Sigma,\mu)$ is $\sigma$-finite, then the family $\{\Omega_\lambda:\lambda\in\Lambda\}$ is at most countable.
\end{lemma}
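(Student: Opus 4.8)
The plan is to build the decomposition by a maximality argument and then read off countability from $\sigma$-finiteness. First I would consider the collection $\mathcal{F}$ of all pairwise disjoint families of atoms of $(\Omega,\Sigma,\mu)$, partially ordered by inclusion. The union of a chain in $\mathcal{F}$ is again a pairwise disjoint family of atoms, so every chain has an upper bound, and by Zorn's lemma $\mathcal{F}$ has a maximal element $\{\Omega_\lambda:\lambda\in\Lambda\}$.

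Next I would show this maximal family exhausts $\Omega$ up to a null set. Writing $N=\Omega\setminus\bigcup_{\lambda\in\Lambda}\Omega_\lambda$, I would use pure atomicity (every measurable set of positive measure contains an atom) to argue that no atom is contained in $N$: if $A\subseteq N$ were an atom, then $\{\Omega_\lambda:\lambda\in\Lambda\}\cup\{A\}$ would be a strictly larger disjoint family, contradicting maximality. Hence $N$ carries no atom, so $\mu(N)=0$. To upgrade the almost-everywhere equality to the literal equality $\Omega=\bigcup_{\lambda\in\Lambda}\Omega_\lambda$, I would (assuming $\Lambda\neq\varnothing$, the only alternative being the degenerate case $\mu\equiv 0$) fix one index $\lambda_0$ and replace $\Omega_{\lambda_0}$ by $\Omega_{\lambda_0}\cup N$; since $\mu(N)=0$, a short check confirms that $\Omega_{\lambda_0}\cup N$ is still an atom, and now the disjoint atoms cover all of $\Omega$.

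For the second assertion, suppose in addition that $(\Omega,\Sigma,\mu)$ is $\sigma$-finite and write $\Omega=\bigcup_{n\in\mathbb{N}}F_n$ as a countable union of sets of finite measure. By Lemma \ref{AtomDescInSigmFinMeasSp} every atom has finite measure, and in particular $\mu(\Omega_\lambda)>0$, so $\sum_{n}\mu(\Omega_\lambda\cap F_n)=\mu(\Omega_\lambda)>0$ forces $\mu(\Omega_\lambda\cap F_n)>0$ for at least one $n$. Thus it suffices to bound, for each fixed $n$, the number of $\lambda$ with $\mu(\Omega_\lambda\cap F_n)>0$. The sets $\Omega_\lambda\cap F_n$ are pairwise disjoint subsets of $F_n$ with total measure at most $\mu(F_n)<+\infty$, so for each $k$ the index set $\{\lambda:\mu(\Omega_\lambda\cap F_n)>1/k\}$ is finite; taking the union over $k$ shows $\{\lambda:\mu(\Omega_\lambda\cap F_n)>0\}$ is countable, and the union over $n$ then shows $\Lambda$ is at most countable.

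The step I expect to be the main obstacle is the passage from "covers up to a null set" to literal set equality, together with the measurability of $\bigcup_{\lambda\in\Lambda}\Omega_\lambda$ when $\Lambda$ is uncountable, since an arbitrary union of measurable sets need not lie in $\Sigma$. I would circumvent this by phrasing maximality purely as the nonexistence of an atom disjoint from the family (which never requires forming the uncountable union) and by absorbing the residual null part into a single atom as above. In the $\sigma$-finite case the difficulty evaporates once countability is established, because the union is then countable and hence automatically measurable.
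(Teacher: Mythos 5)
Your proposal is correct and follows essentially the same route as the paper's own proof: Zorn's lemma applied to pairwise disjoint families of atoms, pure atomicity plus maximality to show the residual set $N$ is null, absorption of $N$ into a single atom $\Omega_{\lambda_0}\cup N$, and countability in the $\sigma$-finite case by counting, for each finite-measure piece $F_n$, the finitely many atoms with $\mu(\Omega_\lambda\cap F_n)>1/k$. Your explicit attention to the measurability of the uncountable union (which the paper passes over silently when writing $\mu(\Omega_0)$) is a worthwhile refinement, but it does not change the structure of the argument.
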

\begin{proof}
Let $\mathcal{F}=\{F\subset A(\Omega,\mu):\Omega',\Omega''\in F\implies \Omega'\cap\Omega''=\varnothing\}$. For $F',F''\in\mathcal{F}$ we take by definition $F'\leq F''$ if $F'\subset F''$. In this case $(\mathcal{F},\leq)$ is a partially ordered set in which every totally ordered set have upper bound. By Zorn's lemma we have maximal element $\tilde{F}=\{\tilde{\Omega}_\lambda:\lambda\in\Lambda\}$. Define $\Omega_0=\Omega\setminus(\bigcup_{\lambda\in\Lambda}\tilde{\Omega}_\lambda)$. If $\mu(\Omega_0)>0$ then since $\Omega$ is purely atomic there exist $\Omega_1\in A(\Omega,\mu)$ such that $\Omega_1\subset\Omega_0$. Consider $F=\tilde{F}\cup \{\Omega_1\}\in\mathcal{F}$, then $\tilde{F}\leq F$ and $\tilde{F}\neq F$. This contradicts maximality of $\tilde{F}$, hence $\mu(\Omega_0)=0$. Now take any $\lambda_0\in\Lambda$, then define
$$
\Omega_\lambda=
\begin{cases}
\tilde{\Omega}_{\lambda_0}\cup\Omega_0\quad&\text{if}\quad\lambda=\lambda_0\\
\tilde{\Omega}_\lambda            \quad&\text{if}\quad\lambda\neq\lambda_0
\end{cases}
$$
Clearly $\tilde{\Omega}_{\lambda_0}\cup\Omega_0$ is an atom disjoint from atoms $\tilde{\Omega}_\lambda$ for $\lambda\neq\lambda_0$. Hence $\{\Omega_\lambda:\lambda\in\Lambda\}$ is the desired family. 

If $(\Omega,\Sigma,\mu)$ is $\sigma$-finite we have represeentation $\Omega=\bigcup_{n\in\mathbb{N}}E_n$ as disjoint union of measurable sets of finite measure. Define $\Omega_{\lambda, n}=\Omega_\lambda\cap E_n$, then for each $n\in\mathbb{N}$ the we have $E_n=\bigcup_{\lambda\in\Lambda}\Omega_{\lambda,n}$ and $\Omega_{\lambda',n}\cap\Omega_{\lambda'',n}=\varnothing$ for $\lambda'\neq\lambda''$. Since $\mu(E_n)<+\infty$, then the family $\{\lambda\in\Lambda:\mu(\Omega_{\lambda,n})>k^{-1}\}$ is finite for every $k\in\mathbb{N}$. Thus the family $\Lambda_n=\{\lambda\in\Lambda:\mu(\Omega_{\lambda,n})>0\}$ is at most countable. Since for all $\lambda\in\Lambda$ we have a representaion $\Omega_\lambda=\bigcup_{n\in\mathbb{N}}\Omega_{\lambda,n}$ where $\mu(\Omega_\lambda)>0$ and $\Omega_{\lambda,n}\cap\Omega_{\lambda,m}=\varnothing$, then $\mu(\Omega_{\lambda,n})>0$ for some $n\in\mathbb{N}$. In other words $\Lambda=\bigcup_{n\in\mathbb{N}}\Lambda_n$, so $\Lambda$ is at most countable as union at most countable sets $\Lambda_n$.
\end{proof}

\begin{theorem}[\cite{RoyJ}, 2.1]\label{MeasSpDecomp} Let $(\Omega,\Sigma,\mu)$ be a $\sigma$-finite measure space, then there exist purely atomic measure $\mu_1:\Sigma\to[0,+\infty]$ and non atomic measure $\mu_2:\Sigma\to[0,+\infty]$ such that $\mu=\mu_1+\mu_2$ and $\mu_1\perp\mu_2$.
\end{theorem}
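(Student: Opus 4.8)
The plan is to split $\Omega$ into an ``atomic part'' that carries all the atoms and a complementary ``diffuse part'', and to take $\mu_1,\mu_2$ to be the restrictions of $\mu$ to these two pieces. First I would reuse the Zorn's lemma construction from the proof of Lemma \ref{PureAtomSpDecomp}: let $\mathcal{F}$ be the set of pairwise disjoint families of atoms ordered by inclusion and extract a maximal element $\{\Omega_\lambda:\lambda\in\Lambda\}$. The very same counting argument as in Lemma \ref{PureAtomSpDecomp} (intersect each $\Omega_\lambda$ with the finite-measure pieces of a $\sigma$-finite exhaustion and bound how many atoms can exceed $k^{-1}$ in measure) shows that $\Lambda$ is at most countable, which is exactly what guarantees that $\Omega_a:=\bigcup_{\lambda\in\Lambda}\Omega_\lambda$ is measurable. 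Setting $\Omega_c:=\Omega\setminus\Omega_a$, I would then define
$$\mu_1(E):=\mu(E\cap\Omega_a),\qquad \mu_2(E):=\mu(E\cap\Omega_c)\qquad(E\in\Sigma).$$

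Both $\mu_1,\mu_2$ are clearly $\sigma$-additive measures into $[0,+\infty]$, and since $\Omega=\Omega_a\sqcup\Omega_c$ we immediately obtain $\mu=\mu_1+\mu_2$. Mutual singularity $\mu_1\perp\mu_2$ is witnessed by the partition $(\Omega_a,\Omega_c)$, because $\mu_1(\Omega_c)=\mu(\Omega_c\cap\Omega_a)=0$ and symmetrically $\mu_2(\Omega_a)=0$, so $\mu_1$ is concentrated on $\Omega_a$ and $\mu_2$ on $\Omega_c$. To see that $\mu_1$ is purely atomic, note first that for $E\subset\Omega_\lambda$ one has $\mu_1(E)=\mu(E)$, so each $\Omega_\lambda$ is a $\mu_1$-atom. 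If now $\mu_1(E)>0$, then $\mu(E\cap\Omega_\lambda)>0$ for some $\lambda$, whence $E\cap\Omega_\lambda$ differs from the atom $\Omega_\lambda$ by a null set and is itself a $\mu_1$-atom contained in $E$; thus every set of positive $\mu_1$-measure contains an atom.

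The step I expect to be the crux is verifying that $\mu_2$ is non-atomic, and this is precisely where maximality of $\{\Omega_\lambda:\lambda\in\Lambda\}$ enters. Suppose for contradiction that $E$ is a $\mu_2$-atom. Replacing $E$ by $E\cap\Omega_c$ changes neither $\mu_2(E)>0$ nor the atom property (since $E$ and $E\cap\Omega_c$ differ by the $\mu_2$-null set $E\cap\Omega_a$), so I may assume $E\subset\Omega_c$. On subsets of $\Omega_c$ the measure $\mu_2$ coincides with $\mu$, hence $E$ is in fact a $\mu$-atom. But $E\subset\Omega\setminus\Omega_a$ is disjoint from every $\Omega_\lambda$, so $\{\Omega_\lambda:\lambda\in\Lambda\}\cup\{E\}$ is a strictly larger member of $\mathcal{F}$, contradicting maximality. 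Therefore $\mu_2$ has no atoms.

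Finally I would record where $\sigma$-finiteness is genuinely used: without it the maximal family could be uncountable and $\Omega_a$ might fail to be measurable, which would break the very first step; once $\Omega_a,\Omega_c\in\Sigma$ are in hand, all the remaining verifications are elementary set manipulations. This also explains why the hypothesis matches that of the cited source.
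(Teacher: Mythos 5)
Your proof is correct, but note that the paper itself offers no proof of Theorem \ref{MeasSpDecomp} to compare against: it is quoted as a known result from Johnson's paper (the citation [\cite{RoyJ}, 2.1]). What you have written is essentially the standard argument, and in fact the one in the cited source: a maximal pairwise disjoint family of atoms via Zorn's lemma, countability of that family from $\sigma$-finiteness (so that $\Omega_a=\bigcup_{\lambda\in\Lambda}\Omega_\lambda$ is measurable), restriction of $\mu$ to $\Omega_a$ and its complement, and maximality to rule out atoms of $\mu_2$. Your proposal dovetails nicely with the paper because you reuse the exact Zorn-plus-counting machinery of Lemma \ref{PureAtomSpDecomp} rather than importing anything new, which makes the paper self-contained where it previously relied on an external reference. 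All the verifications are sound: the identification of $\mu_2$-atoms inside $\Omega_c$ with $\mu$-atoms is justified since $\mu_2$ agrees with $\mu$ on subsets of $\Omega_c$; your pure atomicity check correctly uses countable additivity over the countable index set $\Lambda$ to extract some $\lambda$ with $\mu(E\cap\Omega_\lambda)>0$, and the observation that $E\cap\Omega_\lambda$ is then itself an atom (being an atom minus a null set) matches the paper's working definition that every set of positive measure contains an atom. Your closing remark about where $\sigma$-finiteness enters is also accurate and worth keeping: without countability of $\Lambda$ the union $\Omega_a$ need not lie in $\Sigma$, and indeed the decomposition can fail for general measure spaces, which is why the hypothesis of the theorem (and of Johnson's Theorem 2.1) is stated as it is.
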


\begin{theorem}[\cite{SierpW}]\label{ContOfNonAtmMeas} Let $(\Omega,\Sigma,\mu)$ be nonatomic measure space. If $E\in\Sigma$  with $\mu(E)>0$, then for all $t\in[0,\mu(E)]$ there exist $F\in\Sigma$ such that $F\subset E$ and $\mu(F)=t$
\end{theorem}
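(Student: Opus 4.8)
The plan is to reduce to the case of finite $t$, to isolate the single nontrivial ingredient as a ``small subsets'' lemma, and then to reach the value $t$ by a greedy exhaustion that never overshoots. First I would dispose of the trivial case: if $t=\mu(E)$ put $F=E$. Hence assume $t<\mu(E)$; since $t\le\mu(E)$ this forces $t<+\infty$, so from now on $t$ is a finite nonnegative number while $\mu(E)$ may still be infinite.

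The key sublemma is: \emph{if $\mu(A)>0$, then for every $\varepsilon>0$ there is a measurable $B\subseteq A$ with $0<\mu(B)<\varepsilon$.} When $\mu(A)<+\infty$ I would prove this by iterated halving: because $A$ is not an atom there is measurable $A'\subseteq A$ with $\mu(A')>0$ and $\mu(A\setminus A')>0$, and replacing $A'$ by whichever of $A',A\setminus A'$ has the smaller measure gives a positive-measure subset of measure at most $\tfrac12\mu(A)$; iterating produces subsets of measure at most $2^{-n}\mu(A)$, eventually below $\varepsilon$. When $\mu(A)=+\infty$ I first extract a subset of finite positive measure, using that the ambient space is $\sigma$-finite (writing $\Omega=\bigcup_{n}E_n$ with $\mu(E_n)<+\infty$ and noting that some $A\cap E_n$ has positive measure), and then apply the finite case inside it. I would remark that this step genuinely requires semifiniteness: for a measure taking only the values $0$ and $+\infty$ the conclusion of the theorem itself fails, so $\sigma$-finiteness here is not a mere convenience.

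Next I construct an increasing sequence $\varnothing=F_0\subseteq F_1\subseteq\cdots$ inside $E$ with $\mu(F_n)\le t$. Given $F_n$, set $r_n=t-\mu(F_n)$; if $r_n=0$ I stop with $F=F_n$, and otherwise $E\setminus F_n$ has measure at least $r_n>0$. I then let
\[
s_n=\sup\{\mu(G): G\in\Sigma,\ G\subseteq E\setminus F_n,\ \mu(G)\le r_n\},
\]
which is positive by the sublemma and at most $r_n\le t<+\infty$, and I choose $G_n\subseteq E\setminus F_n$ with $\mu(G_n)\le r_n$ and $\mu(G_n)>s_n-2^{-n}$, putting $F_{n+1}=F_n\cup G_n$. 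Since $G_n$ is disjoint from $F_n$ and $\mu(G_n)\le r_n$, the bound $\mu(F_{n+1})\le t$ is preserved.

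Finally I set $F=\bigcup_{n}F_n$, so that $\mu(F)=\lim_n\mu(F_n)\le t$ by continuity from below, and the main point is to rule out $\mu(F)<t$. If $\mu(F)=t-\eta$ with $\eta>0$, then telescoping gives $\sum_{n}\mu(G_n)=\mu(F)<+\infty$, hence $\mu(G_n)\to0$ and $s_n<\mu(G_n)+2^{-n}\to0$. On the other hand $E\setminus F$ has positive measure, so the sublemma yields $H\subseteq E\setminus F$ with $0<\mu(H)<\eta$; since $H\subseteq E\setminus F_n$ and $\mu(H)<\eta\le r_n$ for every $n$, the set $H$ competes in the supremum defining $s_n$, giving $s_n\ge\mu(H)>0$ for all $n$ and contradicting $s_n\to0$. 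Therefore $\mu(F)=t$. I expect the delicate point to be exactly this final balancing act: choosing each $G_n$ close enough to the current supremum that the increments are summable, yet arranging that a persistent positive residual would keep those suprema bounded away from $0$. The small-subsets lemma, and with it the $\sigma$-finiteness hypothesis, is precisely what makes both halves of this argument run.
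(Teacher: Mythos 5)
Your proof is correct, and there is no argument in the paper to compare it against step by step: Theorem \ref{ContOfNonAtmMeas} is stated with a bare citation to Sierpinski's 1922 paper, so you have supplied the proof the paper omits. I checked the details and they hold: the halving sublemma is sound (for $\mu(A)<+\infty$ the two pieces of a non-atom have finite measures summing to $\mu(A)$, so the smaller has measure at most $\tfrac12\mu(A)$); the invariant $\mu(F_{n+1})=\mu(F_n)+\mu(G_n)\le t$ is preserved because $G_n$ is disjoint from $F_n$ and $\mu(G_n)\le r_n$; the near-optimal choice $\mu(G_n)>s_n-2^{-n}$ is always available since $0<s_n\le r_n\le t<+\infty$; and in the final contradiction the residual set $H$ is genuinely admissible for every $s_n$, since $\mu(F_n)\le\mu(F)$ gives $r_n=t-\mu(F_n)\ge\eta>\mu(H)$, while the telescoping bound $\sum_n\mu(G_n)=\mu(F)\le t<+\infty$ forces $s_n<\mu(G_n)+2^{-n}\to 0$.

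Where you add genuine value is the hypothesis bookkeeping. As literally stated in the paper --- nonatomic, no $\sigma$-finiteness, and $t$ ranging over $[0,\mu(E)]$ with $\mu(E)=+\infty$ allowed --- the theorem is false: take $\Omega=[0,1]$ with $\mu(A)=0$ for Lebesgue-null $A$ and $\mu(A)=+\infty$ otherwise; this measure is nonatomic (any non-null set splits into two non-null pieces), yet no set has measure $t=1$. Your proof correctly isolates semifiniteness (here supplied by $\sigma$-finiteness, and needed only when $\mu(E)=+\infty$; for $\mu(E)<+\infty$ your argument uses nothing beyond nonatomicity, which matches Sierpinski's original finite-measure setting) as the indispensable extra hypothesis. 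This is worth flagging because Proposition \ref{TopInjMultOpCharacOnNonAtomMeasSp}, which invokes the theorem to extract sets with $\mu(E_n)=2^{-n}$, likewise omits $\sigma$-finiteness; no harm results in the paper, since that proposition is only ever applied, via Theorem \ref{TopInjMultOpCharacOnMeasSp} and Proposition \ref{DescOfLpSpOnMeasSp}, to the nonatomic part of a $\sigma$-finite space --- but your formulation is the one that is true in the stated generality.
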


\begin{theorem}[\cite{RoyJ2}, 2.1]\label{LebMeasDecomp} Let $(\Omega,\Sigma,\mu)$, $(\Omega,\Sigma,\nu)$ be $\sigma$-finite measure spaces, then there exist a measurable function $\rho_{\nu,\mu}$ a $\sigma$-finite measure $\nu_s:\Sigma\to[0,+\infty]$ and a set $\Omega_s\in\Sigma$ such that

1) $\nu=\rho_{\nu,\mu}\cdot\mu+\nu_s$

2) $\mu\perp\nu_s$ i.e.  $\mu(\Omega_s)=\nu_s(\Omega_c)=0$, where $\Omega_c=\Omega\setminus\Omega_s$
\end{theorem}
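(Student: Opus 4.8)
The plan is to prove the combined Lebesgue--Radon--Nikodym decomposition via von Neumann's Hilbert space argument, first in the finite case and then reducing the $\sigma$-finite case to it. Since both $\mu$ and $\nu$ are $\sigma$-finite, I would begin by choosing a single partition $\Omega=\bigsqcup_{n\in\mathbb{N}}\Omega^{(n)}$ into disjoint measurable sets on each of which both $\mu$ and $\nu$ are finite; this is obtained by intersecting the two given $\sigma$-finite exhaustions and disjointifying. If the decomposition $\nu=\rho_n\mu+\nu_s^{(n)}$ with singular set $\Omega_s^{(n)}\subset\Omega^{(n)}$ is established on each block, the global objects are assembled by setting $\rho=\sum_n\rho_n\mathbf{1}_{\Omega^{(n)}}$, $\nu_s=\sum_n\nu_s^{(n)}$ and $\Omega_s=\bigcup_n\Omega_s^{(n)}$; the measure $\nu_s$ inherits $\sigma$-finiteness from $\nu_s\le\nu$, and the relations $\mu(\Omega_s)=0$, $\nu_s(\Omega\setminus\Omega_s)=0$ follow by countable additivity from the corresponding relations on the blocks.

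For the finite case I would set $\lambda=\mu+\nu$, a finite measure, and note that $f\mapsto\int_\Omega f\,d\nu$ is a bounded linear functional on $L_2(\Omega,\lambda)$ because $\big|\int f\,d\nu\big|\le\int|f|\,d\lambda\le\lambda(\Omega)^{1/2}\|f\|_{L_2(\lambda)}$. By the Riesz representation theorem there is $g\in L_2(\Omega,\lambda)$ with $\int f\,d\nu=\int fg\,d\lambda$ for all $f\in L_2(\lambda)$. Testing this identity against indicators of the sets $\{g<0\}$ and $\{g>1\}$ shows that $0\le g\le 1$ holds $\lambda$-almost everywhere, so after modification on a $\lambda$-null set I may assume $0\le g\le1$ everywhere.

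Next I would define the singular carrier $\Omega_s=\{g=1\}$ and $\Omega_c=\Omega\setminus\Omega_s=\{g<1\}$, and set $\nu_s(E)=\nu(E\cap\Omega_s)$. Rewriting the Riesz identity as $\int f(1-g)\,d\nu=\int fg\,d\mu$ and choosing $f=\mathbf{1}_{\Omega_s}$ gives $\mu(\Omega_s)=0$, which is the asserted singularity. For the absolutely continuous part I would insert the test functions $f=\mathbf{1}_{E\cap\Omega_c}(1+g+\cdots+g^n)$ into the same identity, obtaining $\int_{E\cap\Omega_c}(1-g^{n+1})\,d\nu=\int_{E\cap\Omega_c}(g+\cdots+g^{n+1})\,d\mu$; letting $n\to\infty$ and using that $g^{n+1}\to0$ on $\Omega_c$, dominated convergence on the left and monotone convergence on the right yield $\nu(E\cap\Omega_c)=\int_E\rho\,d\mu$ with $\rho=\dfrac{g}{1-g}\mathbf{1}_{\Omega_c}$. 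Combined with $\nu(E\cap\Omega_s)=\nu_s(E)$ this is exactly $\nu=\rho\mu+\nu_s$.

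The routine verifications (boundedness of the functional, the a.e.\ bounds on $g$, and the passage from the identity on indicators to all of $L_2$ by linearity and density) are standard. I expect the only genuinely delicate step to be the extraction of the density $\rho$: one must justify the two limit interchanges simultaneously and confirm that the level set $\{g=1\}$ captures precisely the part of $\nu$ that is singular with respect to $\mu$, so that no absolutely continuous mass leaks into $\nu_s$ and the resulting $\rho$ is finite $\mu$-a.e.\ (on each block $\int_{\Omega^{(n)}}\rho\,d\mu=\nu(\Omega^{(n)}\cap\Omega_c)<\infty$). Once the finite case is secured in this form, the $\sigma$-finite reduction described above is purely bookkeeping.
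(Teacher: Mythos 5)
Your proof is correct, but it is worth noting that the paper itself gives no argument for this statement at all: Theorem \ref{LebMeasDecomp} is quoted from Johnson's paper \cite{RoyJ2} as a known result, so there is no internal proof to match. What you supply is the classical von Neumann argument: represent $f\mapsto\int f\,d\nu$ on $L_2(\Omega,\mu+\nu)$ by a Riesz element $g$, show $0\le g\le 1$ a.e., take the singular carrier $\Omega_s=\{g=1\}$, and extract the density $\rho=\frac{g}{1-g}\mathbf{1}_{\{g<1\}}$ via the geometric-series test functions; the $\sigma$-finite case is then assembled blockwise from a common partition on which both measures are finite. All the steps check out: the bound $\bigl|\int f\,d\nu\bigr|\le\lambda(\Omega)^{1/2}\Vert f\Vert_{L_2(\lambda)}$ is valid, the test functions $\mathbf{1}_{E\cap\Omega_c}(1+g+\cdots+g^n)$ are bounded and hence lie in $L_2(\lambda)$ for $\lambda$ finite, the two limit passages (dominated convergence on the $\nu$-side since $\nu$ is finite on each block, monotone convergence on the $\mu$-side since the partial sums $g+\cdots+g^{n+1}$ increase to $g/(1-g)$ on $\{g<1\}$) are justified, and $\nu_s\le\nu$ yields $\sigma$-finiteness of the singular part while $\nu_s(\Omega_c)=\nu(\Omega_c\cap\Omega_s)=0$ gives the singularity relations exactly in the form the paper states them. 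Compared with the paper's bare citation, your route buys a self-contained and constructive proof (explicit formula for $\rho_{\nu,\mu}$ and for the singular set) at the cost of a page of Hilbert-space machinery; Johnson's treatment in \cite{RoyJ2} is a direct measure-theoretic derivation, but for the purposes of this paper, which only uses the existence of $\rho_{\nu,\mu}$, $\nu_s$ and $\Omega_s$ with properties 1) and 2), either is adequate and your version would serve as a legitimate substitute for the citation.
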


\subsection{Decompostion of $L_p$ spaces}

All linear spaces in this article are considered over field $\mathbb{C}$. By $L_0(\Omega,\mu)$ we denote the linear space of measurable functions on $\Omega$. If $p=\infty$ then we take by definition that $1/p=0$. All equalities and inequalities about $L_p$ functions are understood up to sets of measure zero. 

\begin{proposition}\label{LpSpDecomp} Let $(\Omega,\Sigma,\mu)$ be a measure space and $p\in[1,+\infty]$. Assume we have represetation $\Omega=\bigcup_{\lambda\in\Lambda}\Omega_\lambda$ where $\Omega_{\lambda'}\cap\Omega_{\lambda''}=\varnothing$ for $\lambda'\neq\lambda''$. Then the map 
$$
I_p:L_p(\Omega,\mu)\to \bigoplus\limits_p \left\{L_p(\Omega_\lambda,\mu|_{\Omega_\lambda}):\lambda\in\Lambda\right\}, f\mapsto (\lambda\mapsto f|_{\Omega_\lambda})
$$
is an isometric isomorphism.
\end{proposition}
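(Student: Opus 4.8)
The plan is to check, in turn, that $I_p$ is well defined, linear, isometric, and surjective; since an isometric linear map is automatically injective, these four verifications together yield the isometric isomorphism. Throughout I would keep in mind the definition of the norm on the $\ell_p$-sum: for $p<\infty$ one has $\|(x_\lambda)_\lambda\| = \big(\sum_{\lambda\in\Lambda}\|x_\lambda\|_p^p\big)^{1/p}$, where the unordered sum is understood as the supremum of its finite partial sums, while for $p=\infty$ one has $\|(x_\lambda)_\lambda\| = \sup_{\lambda\in\Lambda}\|x_\lambda\|_\infty$. Linearity of $I_p$ is immediate from the linearity of the restriction operation, and each coordinate $f|_{\Omega_\lambda}$ is $\mu|_{\Omega_\lambda}$-measurable, so the map at least produces families of the right kind.

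The heart of the matter, for $p<\infty$, is the single identity $\int_\Omega|f|^p\,d\mu = \sum_{\lambda\in\Lambda}\int_{\Omega_\lambda}|f|^p\,d\mu$, which says precisely that $\|f\|_p^p = \|I_p(f)\|^p$; this simultaneously establishes membership of $I_p(f)$ in the $\ell_p$-sum (well-definedness) and the isometric property. The inequality $\geq$ is the easy half: for any finite $F\subset\Lambda$ the sets $\{\Omega_\lambda\}_{\lambda\in F}$ are disjoint and contained in $\Omega$, so finite additivity and monotonicity of the integral give $\int_\Omega|f|^p\,d\mu \geq \sum_{\lambda\in F}\int_{\Omega_\lambda}|f|^p\,d\mu$, and passing to the supremum over all finite $F$ yields $\int_\Omega|f|^p\,d\mu \geq \sum_{\lambda\in\Lambda}\int_{\Omega_\lambda}|f|^p\,d\mu$. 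For surjectivity I would, given a family $(f_\lambda)_\lambda$ in the $\ell_p$-sum, glue the pieces by setting $f=\sum_{\lambda\in\Lambda}f_\lambda\mathbf{1}_{\Omega_\lambda}$ pointwise, check that $f|_{\Omega_\lambda}=f_\lambda$ so that $I_p(f)=(f_\lambda)_\lambda$, and conclude $\|f\|_p<\infty$ from the same additivity identity. The case $p=\infty$ runs in parallel, the identity being replaced by $\|f\|_\infty=\sup_{\lambda}\|f|_{\Omega_\lambda}\|_\infty$ (the essential supremum over $\Omega$ equals the supremum of the essential suprema over the pieces).

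I expect the main obstacle to lie exactly at the two points where the index set $\Lambda$ may be uncountable: the reverse inequality $\leq$ in the additivity identity, and the measurability of the glued function $f$. Both are genuine measure-theoretic subtleties rather than formalities. For the additivity, since $\|f\|_p^p<\infty$ the finite measure $E\mapsto\int_E|f|^p\,d\mu$ assigns positive mass to only countably many of the $\Omega_\lambda$ (the sets $\{\lambda:\int_{\Omega_\lambda}|f|^p\,d\mu>1/k\}$ are finite), so one reduces to a countable subfamily $\{\lambda_n\}$ and applies ordinary countable additivity, after arguing that the residual set $\Omega\setminus\bigcup_n\Omega_{\lambda_n}$ carries no mass. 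This residual step, and likewise the measurability of $f$ when gluing over an uncountable family, are where the measure-theoretic structure of the ambient ($\sigma$-finite) space must actually be used to ensure that only countably many pieces support $f$; I would make this reduction explicit rather than glossing over it, since it is precisely the place where a careless treatment of an uncountable partition could fail.
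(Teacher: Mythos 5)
Your main line (well-definedness, linearity, gluing for surjectivity, the additivity identity for the isometry) is exactly the paper's proof, which simply asserts $\int_\Omega|f|^p\,d\mu=\sum_{\lambda\in\Lambda}\int_{\Omega_\lambda}|f|^p\,d\mu$ and the analogous $\operatorname{essup}$ identity without comment. The genuine gap is in the repair you propose for uncountable $\Lambda$: the step ``argue that the residual set $\Omega\setminus\bigcup_n\Omega_{\lambda_n}$ carries no mass'' would fail, and no appeal to the ambient measure-theoretic structure can save it, because the statement itself is false for uncountable partitions. Take $\Omega=[0,1]$ with Lebesgue measure and the partition into singletons $\Omega_x=\{x\}$: every $\mu|_{\Omega_x}$ is the zero measure, so each $L_p(\Omega_x,\mu|_{\Omega_x})=\{0\}$ and the $\ell_p$-sum is $\{0\}$, while $L_p[0,1]\neq\{0\}$ and $I_p$ is the zero map. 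Here every piece carries zero mass of $|f|^p$, your countable subfamily is empty, and the residual set is all of $[0,1]$. Note this measure space is finite, hence $\sigma$-finite (a hypothesis the proposition does not even make), so your suggestion that $\sigma$-finiteness ensures only countably many pieces support $f$ is mistaken: $\sigma$-finiteness bounds the number of pieces of \emph{positive measure}, but the union of the null pieces can still carry positive measure. The same example kills the $p=\infty$ identity $\operatorname{essup}_{\Omega}|f|=\sup_\lambda\operatorname{essup}_{\Omega_\lambda}|f|$, and measurability of a function glued over an uncountable family is likewise irreparable in general.

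The correct resolution is not a cleverer residual-set argument but a countability hypothesis. When $\Lambda$ is at most countable, your additivity identity is literally countable additivity of the measure $E\mapsto\int_E|f|^p\,d\mu$, the glued function is a countable sum of measurable functions, and for $p=\infty$ a set is null iff each of its intersections with the $\Omega_\lambda$ is null; at that point your argument (and the paper's) goes through verbatim, with your observation that only countably many coordinates of an element of the $\ell_p$-sum are nonzero still useful for surjectivity. This restriction costs nothing downstream: every application of the proposition in the paper uses an at most countable partition — proposition \ref{DescOfLpSpOnPureAtomMeasSp} via the at most countable atom family of lemma \ref{PureAtomSpDecomp}, proposition \ref{DescOfLpSpOnMeasSp} with two pieces, and proposition \ref{MultOpDecompDecomp} with finite partitions. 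So you correctly located the soft spot that the paper glosses over, but the explicit reduction you promise to supply cannot be carried out as described; the proposition should instead be read (or restated) with $\Lambda$ at most countable.
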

\begin{proof}
If $f\in L_p(\Omega,\mu)$, then, of course, $f|_{\Omega_\lambda}\in L_p(\Omega_\lambda,\mu|_{\Omega_\lambda})$ for $\lambda\in\Lambda$. So $I_p$ is well defined and, obviously, it is linear. Now it is remains to prove that $I_p$ is surjective and isometric. Let $f_\lambda\in L_p(\Omega_\lambda,\mu|_{\Omega_\lambda})$ for $\lambda\in\Lambda$ then define $f(\omega)=f_\lambda(\omega)$ if $\omega\in \Omega_\lambda$. Clearly $I_p(f)_\lambda=f_\lambda$ so $I_p$ is surjective. Then for $p\in[1,+\infty)$ we have
$$
\Vert I_p(f)\Vert_{\bigoplus\limits_p \left\{L_p(\Omega_\lambda,\mu|_{\Omega_\lambda}):\lambda\in\Lambda\right\}}
=\left(\sum\limits_{\lambda\in\Lambda}\int_{\Omega_\lambda}|f|_{\Omega_\lambda}(\omega)|^p d\mu|_{\Omega_\lambda}(\omega)\right)^{1/p}
=\left(\int_{\Omega}|f(\omega)|^pd\mu(\omega)\right)^{1/p}
=\Vert f\Vert_{L_p(\Omega,\mu)}
$$
Similarly for $p=\infty$ we have
$$
\Vert I_\infty(f)\Vert_{\bigoplus\limits_p \left\{L_p(\Omega_\lambda,\mu|_{\Omega_\lambda}):\lambda\in\Lambda\right\}}
=\sup\limits_{\lambda\in\Lambda}\mathop{\operatorname{essup}}_{\omega\in\Omega_\lambda}|f|_{\Omega_\lambda}(\omega)|
=\mathop{\operatorname{essup}}_{\omega\in\Omega}|f(\omega)|
=\Vert f\Vert_{L_\infty(\Omega,\mu)}
$$
In both cases $I_p$ is isometric.
\end{proof}

\begin{lemma}\label{FuncDescOnAtom} Let $(\Omega,\Sigma,\mu)$ be a $\sigma$-finite measure space with atom $\Omega'$. Then

1) If $p\in[1,+\infty]$ and $f\in L_p(\Omega',\mu|_{\Omega'})$, then 
$$
f(\omega)=\mu(\Omega')^{-1}\int_{\Omega'} f(\omega')d\mu(\omega')
$$
for $\omega\in\Omega'$.

2) If $p\in[1,+\infty]$ the map 
$$
J_p:L_p(\Omega',\mu|_{\Omega'})\to \ell_p(\{1\}),f\mapsto\left(1\mapsto \mu(\Omega')^{1/p-1}\int_{\Omega'} f(\omega')d\mu(\omega')\right)
$$
is an isometric isomorphism.

\end{lemma}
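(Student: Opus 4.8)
The plan is to prove both parts by first establishing the key structural fact that every measurable function on an atom is essentially constant, and then reading off both statements as consequences. By Lemma \ref{AtomDescInSigmFinMeasSp} the atom $\Omega'$ has finite measure, and this finiteness will be essential for the continuity-of-measure argument below.

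For part 1), I would first treat a real-valued measurable $f$ on $\Omega'$. For each $t\in\mathbb{R}$ the level set $\{f\le t\}$ is a measurable subset of $\Omega'$, so by the atom property either $\mu(\{f\le t\})=0$ or $\mu(\Omega'\setminus\{f\le t\})=0$, i.e. $\mu(\{f\le t\})$ equals $0$ or $\mu(\Omega')$. Since $f$ is finite almost everywhere (being in $L_p$), the set $S=\{t:\mu(\{f\le t\})>0\}$ is a non-empty, upward-closed, proper subset of $\mathbb{R}$; set $c=\inf S$. Then $\mu(\{f\le t\})=0$ for $t<c$ and $\mu(\{f\le t\})=\mu(\Omega')$ for $t>c$. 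Writing $\{f<c\}=\bigcup_n\{f\le c-1/n\}$ gives $\mu(\{f<c\})=0$, while $\{f\le c\}=\bigcap_n\{f\le c+1/n\}$ together with continuity from above (here the finiteness $\mu(\Omega')<+\infty$ is used) gives $\mu(\{f\le c\})=\mu(\Omega')$. Hence $f=c$ almost everywhere. Applying this to the real and imaginary parts handles complex-valued $f$. Finally, integrating the constant value over $\Omega'$ yields $\int_{\Omega'}f\,d\mu=c\,\mu(\Omega')$, that is $c=\mu(\Omega')^{-1}\int_{\Omega'}f\,d\mu$, which is exactly the claimed formula.

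For part 2), part 1) shows that each $f\in L_p(\Omega',\mu|_{\Omega'})$ equals its average $c=\mu(\Omega')^{-1}\int_{\Omega'}f\,d\mu$ almost everywhere, so $J_p(f)=c\,\mu(\Omega')^{1/p}$. Linearity of $J_p$ is immediate from linearity of the integral. For injectivity, $J_p(f)=0$ forces $c=0$ (as $\mu(\Omega')>0$) and hence $f=0$ in $L_p$; for surjectivity, any scalar $a$ is the image of the constant function $a\,\mu(\Omega')^{-1/p}$, which lies in $L_p$ since $\mu(\Omega')<+\infty$. The isometry property reduces to the computation $\|f\|_p=|c|\,\mu(\Omega')^{1/p}$ for $p<\infty$ (by integrating $|c|^p$ over $\Omega'$) and $\|f\|_\infty=|c|$ for $p=\infty$, matching $\|J_p(f)\|=|c|\,\mu(\Omega')^{1/p}$ in both cases, since $\mu(\Omega')^{1/p}=1$ when $p=\infty$ by the convention $1/p=0$ fixed above.

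The main obstacle is the passage in part 1) from ``all level sets are null or co-null'' to ``$f$ is essentially constant''; everything else is a routine computation. This step is delicate precisely because it relies on continuity of measure from above, which would fail for an infinite-measure atom, so the appeal to Lemma \ref{AtomDescInSigmFinMeasSp} guaranteeing $\mu(\Omega')<+\infty$ is what makes the argument go through.
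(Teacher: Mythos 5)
Your proposal is correct, and both parts follow the same overall skeleton as the paper (prove $f$ is essentially constant on the atom, then read off part 2 by direct computation, with the $p=\infty$ case handled by the convention $1/p=0$). However, your mechanism for the key step in part 1) is genuinely different. The paper works with the average $k=\mu(\Omega')^{-1}\int_{\Omega'}f\,d\mu$ from the start: it first uses $\mu(\Omega')<+\infty$ to get $L_p(\Omega',\mu|_{\Omega'})\subset L_1(\Omega',\mu|_{\Omega'})$, then applies the atom property to the sets $\{f>k\}$ and $\{f<k\}$ and derives a contradiction from a strict inequality between $\int_{\Omega'}f\,d\mu$ and $k\,\mu(\Omega')$ when either set has full measure. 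You instead run a distribution-function argument: all level sets $\{f\le t\}$ have measure $0$ or $\mu(\Omega')$, you define $c=\inf\{t:\mu(\{f\le t\})>0\}$, and you pin down $f=c$ a.e. via continuity of measure from below and (using $\mu(\Omega')<+\infty$) from above, only afterwards identifying $c$ with the average by integrating. Your route buys a cleaner separation of concerns: essential constancy is established as a purely measure-theoretic fact, with no a priori appeal to integrability of $f$, and it makes explicit exactly where finiteness of the atom enters (continuity from above); it also sidesteps the strict-integral-inequality step, which in the paper's write-up contains sign and notation slips (the set $S_-=f^{-1}((k,+\infty])$ paired with the inequality $\int_{S_-}f<k\,\mu(S_-)$, which should point the other way). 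The paper's route is shorter once the $L_1$ inclusion is noted. Your treatment of part 2) (linearity, surjectivity via the constant $a\,\mu(\Omega')^{-1/p}$, and the isometry computation $\Vert f\Vert_p=|c|\,\mu(\Omega')^{1/p}$) matches the paper's essentially verbatim, and your explicit remark on injectivity is subsumed by the isometry in any case.
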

\begin{proof}
Since $(\Omega,\Sigma,\mu)$ is $\sigma$-finite, by lemma \ref{AtomDescInSigmFinMeasSp} we have $\mu(\Omega')<+\infty$.

1) Since $\mu(\Omega')<+\infty$, then $f\in L_p(\Omega',\mu|_{\Omega'})\subset L_1(\Omega',\mu|_{\Omega'})$. For the beginning assume that $f$ is a real valued function. Denote $k=\mu(\Omega')^{-1}\int_{\Omega'} f(\omega')d\mu(\omega')$, then consider set $S_-=f^{-1}((k,+\infty])$. Since $\Omega'$ is an atom then $\mu(S_-)=\mu(\Omega')$ or $\mu(S_-)=0$. In the first case we get
$$
\int_{\Omega'} f(\omega')\mu(\omega')
=\int_{S_-} f(\omega')\mu(\omega')
<\int_{S_-} c\mu(\omega')
=k\mu(S_-)
=k\mu(\Omega')
=\int_{\Omega'} f(\omega')\mu(\omega')
$$
Contradiction, hence $\mu(S_-)=0$. Similarly we get that $\mu(S_+)=0$ for $S_+=f^{-1}([-\infty,k))$. Hence $f(\omega)=k$ for $\mu$-almost all $\omega\in\Omega'$. If $f$ is complex valued we apply previous result to $\operatorname{Re}(f),\operatorname{Im}(f)\in L_1(\Omega',\mu|_{\Omega'})$ and get that
$$
f(\omega)
=\operatorname{Re}(f)(\omega)+i\operatorname{Im}(f)(\omega)
$$
$$
=\mu(\Omega')^{-1}\int_{\Omega'} \operatorname{Re}(f)(\omega')d\mu(\omega')+i\mu(\Omega')^{-1}\int_{\Omega'} \operatorname{Im}(f)(\omega')d\mu(\omega')
=\mu(\Omega')^{-1}\int_{\Omega'} f(\omega')d\mu(\omega')
$$
for $\mu$-almost all $\omega\in\Omega$

2) Obviously $J_p$ is linear. Take any $z\in\mathbb{C}$ and consider function $f=z\mu(\Omega_1)^{-1/p}\chi_{\Omega'}$, then $J_p(f)(1)=z$. Thus $J_p$ is surjective. Now for $p\in[1,+\infty]$ and all $f\in L_p(\Omega',\mu|_{\Omega'})$ we have
$$
\Vert J_p(f)\Vert_p
=\left|\mu(\Omega')^{1/p-1}\int_{\Omega'}f(\omega')d\mu(\omega')\right|
=\left|\mu(\Omega')^{1/p-1}k\mu(\Omega')\right|
=\mu(\Omega')^{1/p}|k|
=\Vert f\Vert_{L_p(\Omega',\mu|_{\Omega'})}
$$
Thus, the map $J_p$ is a surjective isometry, hence isometric isomorphism.
\end{proof}

\begin{proposition}\label{DescOfLpSpOnPureAtomMeasSp} Let $(\Omega,\Sigma,\mu)$ be a $\sigma$-finite purely atomic measure space, then for $p\in[1,+\infty]$, the map
$$
\tilde{I}_p:L_p(\Omega,\mu)\to \ell_p(\Lambda):f\mapsto\left (\lambda\mapsto J_p(f|_{\Omega_\lambda})(1)\right)
$$
is an isometric isomorphism. Here $\{\Omega_\lambda:\lambda\in\Lambda\}\subset A(\Omega,\mu)$ is at most countable family of pairwise disjoint atoms such that $\Omega=\bigcup_{\lambda\in\Lambda}\Omega_\lambda$.
\end{proposition}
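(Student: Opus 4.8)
The plan is to exhibit $\tilde{I}_p$ as a composition of two isometric isomorphisms that are already available from the preceding results. First I would invoke Lemma \ref{PureAtomSpDecomp} to fix an at most countable family $\{\Omega_\lambda : \lambda \in \Lambda\}$ of pairwise disjoint atoms with $\Omega = \bigcup_{\lambda \in \Lambda} \Omega_\lambda$; this is exactly the decomposition presupposed in the statement, and it guarantees that $\Lambda$ is at most countable, so that $\ell_p(\Lambda)$ is the correct target. By Proposition \ref{LpSpDecomp} the restriction map
$$I_p : L_p(\Omega, \mu) \to \bigoplus_p \{L_p(\Omega_\lambda, \mu|_{\Omega_\lambda}) : \lambda \in \Lambda\}, \quad f \mapsto (\lambda \mapsto f|_{\Omega_\lambda})$$
is an isometric isomorphism.

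Next I would assemble the atomwise maps $J_p$ of Lemma \ref{FuncDescOnAtom} into a single map on the direct sum. Define
$$J_p^{\oplus} : \bigoplus_p \{L_p(\Omega_\lambda, \mu|_{\Omega_\lambda}) : \lambda \in \Lambda\} \to \ell_p(\Lambda), \quad (g_\lambda)_{\lambda \in \Lambda} \mapsto (\lambda \mapsto J_p(g_\lambda)(1)).$$
The point is that $\ell_p(\Lambda)$ is isometrically the $\ell_p$-sum of the one-dimensional spaces $\ell_p(\{1\})$ over $\lambda \in \Lambda$, and $J_p^{\oplus}$ is nothing but the coordinatewise direct sum of the isometric isomorphisms $J_p : L_p(\Omega_\lambda, \mu|_{\Omega_\lambda}) \to \ell_p(\{1\})$. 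Linearity and bijectivity of $J_p^{\oplus}$ then follow coordinatewise from Lemma \ref{FuncDescOnAtom}, its inverse being the coordinatewise direct sum of the maps $J_p^{-1}$.

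The one computation to carry out is the isometry of $J_p^{\oplus}$, and this is where the $\ell_p$-sum norm does the work. For $p \in [1, +\infty)$ one has
$$\|J_p^{\oplus}((g_\lambda)_\lambda)\|_{\ell_p(\Lambda)}^p = \sum_{\lambda \in \Lambda} |J_p(g_\lambda)(1)|^p = \sum_{\lambda \in \Lambda} \|g_\lambda\|_{L_p(\Omega_\lambda, \mu|_{\Omega_\lambda})}^p = \|(g_\lambda)_\lambda\|^p,$$
using that each $J_p$ is an isometry; for $p = \infty$ the same identity holds with sums replaced by suprema. Finally I would observe that, by the very definition of the two maps, $\tilde{I}_p = J_p^{\oplus} \circ I_p$, so that $\tilde{I}_p$ is a composition of isometric isomorphisms and hence itself an isometric isomorphism. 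I expect no genuine obstacle here: the whole argument is a bookkeeping composition, and the only place demanding care is matching the normalisation factor $\mu(\Omega_\lambda)^{1/p - 1}$ hidden inside $J_p$ so that the coordinatewise isometry comes out clean, which Lemma \ref{FuncDescOnAtom} already supplies.
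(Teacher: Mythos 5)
Your proposal is correct and follows essentially the same route as the paper: fix the atom decomposition via Lemma \ref{PureAtomSpDecomp}, apply Proposition \ref{LpSpDecomp} to get $I_p$, apply Lemma \ref{FuncDescOnAtom} atomwise, and compose. The only difference is presentational --- you spell out explicitly the norm computation showing that the coordinatewise direct sum $J_p^{\oplus}$ of the isometric isomorphisms $J_p$ is itself an isometric isomorphism, a step the paper treats as immediate.
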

\begin{proof}
By lemma \ref{PureAtomSpDecomp} we have a family $\{\Omega_\lambda:\lambda\in\Lambda\}\subset A(\Omega,\mu)$ of pairwise disjoint atoms whose union is $\Omega$. By proposition \ref{LpSpDecomp} we have an isometric isomorphism 
$$
L_p(\Omega,\mu)
\cong_1 \bigoplus\limits_p \left\{L_p(\Omega_\lambda,\mu|_{\Omega_\lambda}):\lambda\in\Lambda\right\}
$$
via the map $I_p(f)=\oplus_p \{f|_{\Omega_\lambda}:\lambda\in\Lambda\}$. By lemma \ref{FuncDescOnAtom} we know that $L_p(\Omega_\lambda,\mu|_{\Omega_\lambda})\cong_1\ell_p(\{1\})$ via the map $J_p$. So we get
$$
L_p(\Omega,\mu)
\cong_1 \bigoplus\limits_p \left\{L_p(\Omega_\lambda,\mu|_{\Omega_\lambda}):\lambda\in\Lambda\right\}
\cong_1 \bigoplus\limits_p \left\{\ell_p(\{1\}):\lambda\in\Lambda\right\}
=\ell_p(\Lambda)
$$
via the map $\tilde{I}_p$. Since $(\Omega,\Sigma,\mu)$ is $\sigma$-finite by lemma \ref{PureAtomSpDecomp} we get that $\Lambda$ is at most countable.
\end{proof}

\begin{proposition}\label{DescOfLpSpOnMeasSp} Let $p\in[1,+\infty]$. Let $(\Omega,\Sigma,\mu)$ be $\sigma$-finite measure space, then there exist at most countable family of atoms $\{\Omega_\lambda:\lambda\in\Lambda\}$ and a set $\Omega_{na}=\Omega\setminus\left(\bigcup_{\lambda\in\Lambda}\Omega_\lambda\right)=\Omega\setminus\Omega_{a}$ such that the map
$$
\hat{I}_p:L_p(\Omega,\mu)\to L_p(\Omega_{na},\mu|_{\Omega_{na}})\bigoplus_{p}L_p(\Omega_{a},\mu|_{\Omega_{a}}),f\mapsto (f|_{\Omega_{na}},f|_{\Omega_{a}})
$$
is isometric isomorphism and $\mu|_{\Omega_{na}}$ is nonatomic. 
\end{proposition}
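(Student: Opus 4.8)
The plan is to combine the abstract measure decomposition of Theorem \ref{MeasSpDecomp} with the $L_p$-splitting of Proposition \ref{LpSpDecomp}, using $\Omega_a$ as the carrier of the atomic part and $\Omega_{na}$ as its complement. First I would invoke Theorem \ref{MeasSpDecomp} to write $\mu=\mu_1+\mu_2$ with $\mu_1$ purely atomic, $\mu_2$ nonatomic and $\mu_1\perp\mu_2$. Mutual singularity supplies a measurable set on which $\mu_1$ is concentrated; I would call its complement $\Omega_{na}$ and set $\Omega_a=\Omega\setminus\Omega_{na}$, so that $\mu_1(\Omega_{na})=0$ and $\mu_2(\Omega_a)=0$. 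Adding the two measures on subsets of each piece then yields the pointwise identifications $\mu|_{\Omega_a}=\mu_1|_{\Omega_a}$ and $\mu|_{\Omega_{na}}=\mu_2|_{\Omega_{na}}$.

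Next I would transfer the atomic and non-atomic character to these restrictions. Since $\mu_2$ has no atoms on $\Omega$, any hypothetical atom $E\subset\Omega_{na}$ of $\mu|_{\Omega_{na}}=\mu_2|_{\Omega_{na}}$ would, viewed inside $\Omega$, also be an atom of $\mu_2$, a contradiction; hence $\mu|_{\Omega_{na}}$ is nonatomic. Dually, $\mu_1$ is purely atomic and concentrated on $\Omega_a$, so every positive-measure subset of $\Omega_a$ still contains an atom of $\mu_1$ lying inside it, i.e. $\mu|_{\Omega_a}=\mu_1|_{\Omega_a}$ is purely atomic. It is also $\sigma$-finite, being the restriction of a $\sigma$-finite measure. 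Applying Lemma \ref{PureAtomSpDecomp} to $(\Omega_a,\Sigma|_{\Omega_a},\mu|_{\Omega_a})$ then produces an at most countable pairwise disjoint family of atoms $\{\Omega_\lambda:\lambda\in\Lambda\}$ with $\bigcup_{\lambda\in\Lambda}\Omega_\lambda=\Omega_a$.

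I would then verify that each $\Omega_\lambda$ is genuinely an atom of the original space $(\Omega,\Sigma,\mu)$ and not merely of the restriction: because $\Omega_\lambda\subset\Omega_a$ and $\mu_2(\Omega_a)=0$, one has $\mu(E)=\mu_1(E)$ for every measurable $E\subset\Omega_\lambda$, so the defining alternative for an atom of $\mu_1|_{\Omega_a}$ passes verbatim to $\mu$. Finally, with the two-element partition $\Omega=\Omega_{na}\cup\Omega_a$ in hand, Proposition \ref{LpSpDecomp} delivers the isometric isomorphism $L_p(\Omega,\mu)\cong_1 L_p(\Omega_{na},\mu|_{\Omega_{na}})\oplus_p L_p(\Omega_a,\mu|_{\Omega_a})$ implemented by exactly the restriction map $f\mapsto(f|_{\Omega_{na}},f|_{\Omega_a})$, which is $\hat{I}_p$, completing the argument.

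The only genuinely delicate point is the bookkeeping around mutual singularity: one must check that ``atom of a restricted measure'' and ``atom of the ambient measure'' coincide on each piece, and that pure atomicity and nonatomicity survive restriction to a set of full (respectively null) $\mu_1$-measure. Everything else is a mechanical concatenation of the cited results, so I expect no substantive obstacle beyond this verification.
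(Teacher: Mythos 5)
Your proposal is correct and follows essentially the same route as the paper: Theorem \ref{MeasSpDecomp} to split $\mu=\mu_1+\mu_2$ with $\mu_1\perp\mu_2$, the singularity set to define $\Omega_a$ and $\Omega_{na}$, and Proposition \ref{LpSpDecomp} applied to the two-element partition to obtain $\hat{I}_p$. You are in fact more careful than the paper, which silently omits both the explicit application of Lemma \ref{PureAtomSpDecomp} producing the countable family $\{\Omega_\lambda\}$ and the verification that atoms of the restricted measure are atoms of $(\Omega,\Sigma,\mu)$ --- the bookkeeping you rightly flag as the only delicate point.
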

\begin{proof} By theorem \ref{MeasSpDecomp} we have mutually singular purely atomic measure $\mu_1$ and nonatomic measure $\mu_2$ whose sum is $\mu$. Since they are singular there exist $\Omega_a\in\Sigma$ such that $\mu_2(\Omega_a)=\mu_1(\Omega\setminus\Omega_a)=0$. Thus $\mu|_{\Omega_a}=\mu_1$ is purely atomic and $\mu|_{\Omega_{na}}=\mu_2$ is nonatomic. Here $\Omega_{na}=\Omega\setminus\Omega_a$. By proposition \ref{LpSpDecomp} 
$$
\hat{I}_p:L_p(\Omega,\mu)\to L_p(\Omega_{na},\mu|_{\Omega_{na}})\bigoplus_{p}L_p(\Omega_{a},\mu|_{\Omega_{a}}),f\mapsto (f|_{\Omega_{na}},f|_{\Omega_{a}})
$$
is an isometric isomorphism.
\end{proof}

\begin{proposition}\label{ChngOfDenst} Let $(\Omega,\Sigma,\mu)$ be a $\sigma$-finite measure space. Let $\rho\in L_0(\Omega,\mu)$ be a  positive function. Then
$$
\bar{I}_p:L_p(\Omega,\mu)\to L_p(\Omega,\rho\cdot \mu), f\mapsto\rho^{-1/p}\cdot f
$$
is an isometric isomorphism for all $p\in[1,+\infty]$.
\end{proposition}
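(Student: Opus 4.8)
The plan is to verify the four defining properties of an isometric isomorphism — well-definedness, linearity, isometry, and surjectivity — by direct computation, treating the finite-$p$ and the $p=\infty$ cases separately, since the convention $1/\infty=0$ makes $\bar{I}_\infty$ degenerate into the identity map. Throughout, the two features of $\rho$ that do the work are that it is measurable (so $\rho^{\pm 1/p}$ are again measurable and positive) and that it is \emph{strictly} positive (so that $\mu$ and $\rho\cdot\mu$ are mutually absolutely continuous).

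For $p\in[1,+\infty)$ I would start from the definition $(\rho\cdot\mu)(E)=\int_E\rho\,d\mu$, which gives $\int_\Omega |g|^p\,d(\rho\cdot\mu)=\int_\Omega |g|^p\rho\,d\mu$ for every measurable $g$. Applying this to $g=\rho^{-1/p}\cdot f$ and cancelling $\rho^{-1}\cdot\rho=1$ pointwise yields $\Vert \bar{I}_p(f)\Vert_{L_p(\Omega,\rho\cdot\mu)}^p=\int_\Omega|f|^p\,d\mu=\Vert f\Vert_{L_p(\Omega,\mu)}^p$. This single identity simultaneously shows that $\bar{I}_p$ is well defined (its image lies in $L_p(\Omega,\rho\cdot\mu)$) and isometric; linearity is immediate because $\bar{I}_p$ is multiplication by the fixed function $\rho^{-1/p}$. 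For surjectivity I would exhibit the candidate inverse $g\mapsto\rho^{1/p}\cdot g$ and run the analogous computation to check that $\rho^{1/p}g\in L_p(\Omega,\mu)$ and $\bar{I}_p(\rho^{1/p}g)=g$. Being an isometry, $\bar{I}_p$ is automatically injective, which finishes the finite-$p$ case.

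For $p=\infty$ the convention $1/p=0$ gives $\rho^{-1/p}=1$, so $\bar{I}_\infty$ is literally the identity on function classes and the entire content is that the two norms coincide. Here I would use that $\rho>0$ forces $(\rho\cdot\mu)(E)=0\iff\mu(E)=0$, since an integral of a strictly positive function over $E$ vanishes exactly when $\mu(E)=0$; thus the two measures share the same null sets and hence the same notion of essential supremum, giving $\Vert f\Vert_{L_\infty(\Omega,\rho\cdot\mu)}=\Vert f\Vert_{L_\infty(\Omega,\mu)}$, while the same equivalence of null sets identifies the two spaces as sets.

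The computation is routine; the only point demanding care is the $p=\infty$ case, where the argument rests entirely on the equivalence of $\mu$ and $\rho\cdot\mu$ rather than on a norm identity, and where one must confirm that it is strict positivity of $\rho$ — not mere non-negativity — that guarantees the null sets coincide.
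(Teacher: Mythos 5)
Your proposal is correct and follows essentially the same route as the paper's proof: the direct computation $\int_\Omega|\rho^{-1/p}f|^p\rho\,d\mu=\int_\Omega|f|^p\,d\mu$ for finite $p$, surjectivity via the explicit inverse $g\mapsto\rho^{1/p}\cdot g$, and for $p=\infty$ the observation that strict positivity of $\rho$ makes $\mu$ and $\rho\cdot\mu$ share the same null sets, so the identity map preserves the essential supremum. Your remarks that the one isometry identity also yields well-definedness, and that it is strict positivity (not mere non-negativity) doing the work at $p=\infty$, are accurate refinements of the same argument.
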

\begin{proof} Obviously $\bar{I}_p$ is linear. Let $p\in[1,+\infty)$, then for all $f\in L_p(\Omega,\mu)$ we have
$$
\Vert \bar{I}_p(f)\Vert_{L_p(\Omega,\rho\cdot\mu)}
=\left(\int_{\Omega}|\rho^{-1/p}(\omega)f(\omega)|^p\rho(\omega)d\mu(\omega) \right)^{1/p}
=\left(\int_{\Omega}|f(\omega)|^pd\mu(\omega) \right)^{1/p}
=\Vert f\Vert_{L_p(\Omega,\mu)}
$$
so $\bar{I}_p$ is an isometry. Now for arbitrary $f\in L_p(\Omega,\rho\cdot\mu)$ consider $h=\rho^{1/p}\cdot f$, then
$$
\Vert h\Vert_{L_p(\Omega,\mu)}
=\left(\int_{\Omega}|\rho^{1/p}(\omega)f(\omega)|^pd\mu(\omega) \right)^{1/p}
=\left(\int_{\Omega}|f(\omega)|^p\rho(\omega)d\mu(\omega) \right)^{1/p}
=\Vert f\Vert_{L_p(\Omega,\rho\cdot\mu)}
$$
So $h\in L_p(\Omega,\mu)$ and obviously $\bar{I}_p(h)=f$. Thus $\bar{I}_p$
 is a surjective isometry, i.e.  isometric isomorphism. Now let $p=+\infty$, then $\bar{I}_\infty$ is an identity map. Hence it is enough to show it is isometric. Let $E\in\Sigma$ with $\mu(E)=0$, then $(\rho\cdot\mu)(E)=\int_E\rho(\omega)d\mu(\omega)=0$. On the other hand if $(\rho\cdot\mu)(E)=\int_E\rho(\omega)d\mu(\omega)=0$ then from positivity of $\rho$ it follows that $\mu(E)=0$. So for all $f\in L_\infty(\Omega,\mu)$ we have
$$
\Vert\bar{I}_\infty(f)\Vert_{L_\infty(\Omega,\rho\cdot\mu)}
=\inf\{C>0:(\rho\cdot\mu)(|f|^{-1}((C,+\infty)))=0\}
$$
$$
=\inf\{C>0:\mu(|f|^{-1}((C,+\infty)))=0\}
=\Vert f\Vert_{L_\infty(\Omega,\mu)}
$$
Thus, $\bar{I}_\infty$ is an isometry.
\end{proof}

\begin{proposition}\label{FinDimlpEquivNorms} Let $\Lambda$ be a finite set and $p,q\in[1,+\infty]$, then there exist $C_{p,q}$ such that $\Vert x\Vert_{\ell_p(\Lambda)}\leq C_{p,q}\Vert x\Vert_{\ell_q(\Lambda)}$ for all $x\in\mathbb{C}^\Lambda$.
\end{proposition}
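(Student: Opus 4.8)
The plan is to avoid any appeal to the abstract equivalence of norms on a finite-dimensional space and instead exhibit an explicit constant through two elementary estimates that pass through the $\ell_\infty$-norm. Write $n$ for the number of elements of $\Lambda$ and adopt the convention $n^{1/\infty}=n^0=1$ consistent with the paper's convention $1/\infty=0$. The entire statement will follow from the single chain
$$
\Vert x\Vert_{\ell_p(\Lambda)}\leq n^{1/p}\Vert x\Vert_{\ell_\infty(\Lambda)}\leq n^{1/p}\Vert x\Vert_{\ell_q(\Lambda)},
$$
so that $C_{p,q}=n^{1/p}$ is an admissible choice for every pair $p,q\in[1,+\infty]$.

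The first inequality I would prove by bounding each coordinate by the maximum: for finite $p$ one has $\sum_{\lambda\in\Lambda}|x_\lambda|^p\leq n\,(\max_{\lambda}|x_\lambda|)^p$, and taking $p$-th roots gives $\Vert x\Vert_{\ell_p(\Lambda)}\leq n^{1/p}\Vert x\Vert_{\ell_\infty(\Lambda)}$, while for $p=\infty$ the inequality is a trivial identity. The second inequality is the dual elementary fact that the sup-norm is dominated by every $\ell_q$-norm: for finite $q$ we have $\max_\lambda|x_\lambda|=(\max_\lambda|x_\lambda|^q)^{1/q}\leq(\sum_\lambda|x_\lambda|^q)^{1/q}=\Vert x\Vert_{\ell_q(\Lambda)}$, and again equality when $q=\infty$. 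Composing the two bounds yields the displayed chain.

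There is no genuine obstacle here; the only point requiring care is a uniform treatment of the endpoints $p=\infty$ and $q=\infty$, which is handled cleanly by the convention $1/\infty=0$ so that both estimates and their composition remain valid without a separate case analysis. I would also remark that the constant $n^{1/p}$ is not optimal — a sharper argument via Hölder's inequality with exponents $q/p$ and $q/(q-p)$ yields the best constant $n^{\max(1/p-1/q,\,0)}$ — but since the proposition only asserts the existence of some finite $C_{p,q}$, the simpler chain above suffices.
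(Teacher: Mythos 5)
Your proof is correct, and it takes a genuinely different route from the paper: the paper disposes of the proposition in one line by citing the abstract fact that all norms on a finite-dimensional space are equivalent, whereas you construct an explicit constant by factoring the comparison through the $\ell_\infty$-norm, via $\Vert x\Vert_{\ell_p(\Lambda)}\leq n^{1/p}\Vert x\Vert_{\ell_\infty(\Lambda)}$ and $\Vert x\Vert_{\ell_\infty(\Lambda)}\leq\Vert x\Vert_{\ell_q(\Lambda)}$, both of which you verify correctly, including the endpoint cases under the convention $1/\infty=0$. The paper's argument is shorter and requires no computation, but it is purely existential: it gives no information about $C_{p,q}$. Your argument is self-contained (no appeal to compactness of the unit sphere or equivalence of norms) and effective, which would matter if one wished to track constants quantitatively — for instance, the constant $C_{p,q}$ propagates into the topological injectivity and surjectivity constants $c=c'C_{p,q}^{-1}$ and $c=c'C_{p,q}$ in the proof of Proposition \ref{MultOpDecompDecomp}, so your version turns those qualitative statements into ones with explicit bounds depending only on $\operatorname{Card}(\Lambda)$, $p$, and $q$. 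Your closing remark is also accurate: Hölder's inequality with exponents $q/p$ and $q/(q-p)$ gives the sharp constant $n^{\max(1/p-1/q,\,0)}$, though as you note the proposition only demands existence, so the cruder $n^{1/p}$ suffices for everything the paper uses it for.
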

\begin{proof} Since $(\mathbb{C}^\Lambda,\Vert\cdot\Vert_{\ell_r(\Lambda)})$ is a normed space of finite dimension equal to $\operatorname{Card}(\Lambda)$ for every $r\in[1,+\infty]$, then all norms $\left\{\Vert\cdot\Vert_{\ell_r(\Lambda)}:r\in[1,+\infty]\right\}$ on $\mathbb{C}^\Lambda$ are equivalent. So we get the desired inequality.
\end{proof}

\subsection{Mutiplication operators}

Let $(\Omega,\Sigma,\mu)$ and $(\Omega,\Sigma,\nu)$ be two measure spaces with the same $\sigma$-algebra of measurable sets. For a given $g\in L_0(\Omega,\mu)$ and $p,q\in[1,+\infty]$ we define the multiplication operator
$$
M_g:L_p(\Omega,\mu)\to L_q(\Omega,\nu), f\mapsto g\cdot f
$$ 
Of course certain restrictions on $g$, $\mu$ and $\nu$ are required for $M_g$ to be well defined. For a given $E\in\Sigma$ by $M_g^E$ we will denote the linear operator
$$
M_g^E:L_p(E,\mu|_E)\to L_q(E,\nu|_E),f\mapsto g|_E\cdot f
$$
It is well defined because $f|_{\Omega\setminus E}=0$ implies $M_g(f)|_{\Omega\setminus E}=0$. 

\begin{proposition}\label{MultpOpSurjInjDesc} Let $(\Omega,\Sigma,\mu)$ be a measure space and $g\in L_0(\Omega,\mu)$. Denote $Z_g=g^{-1}(\{0\})$, then for the operator $M_g:L_p(\Omega,\mu)\to L_q(\Omega,\mu)$ we have

1) $\operatorname{Ker}(M_g)=\{f\in L_p(\Omega,\mu):f|_{\Omega\setminus {Z_g}}=0\}$, so $M_g$ is injective if and only if $\mu(Z_g)=0$

2) $\operatorname{Im}(M_g)\subset\{h\in L_q(\Omega,\mu): h|_{Z_g}=0\}$,so if $M_g$ is surjective then $\mu(Z_g)=0$.

\end{proposition}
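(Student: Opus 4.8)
The plan is to reduce both parts to the pointwise identity $M_g(f)(\omega)=g(\omega)f(\omega)$ together with the partition $\Omega=Z_g\cup(\Omega\setminus Z_g)$, on which $g$ vanishes identically, resp.\ is everywhere nonzero. Throughout, equalities of functions are read $\mu$-almost everywhere, as in the conventions of this subsection.

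For part 1) I would compute the kernel directly. If $f|_{\Omega\setminus Z_g}=0$, then $g\cdot f$ vanishes on $\Omega\setminus Z_g$ (because $f=0$ there) and on $Z_g$ (because $g=0$ there), so $f\in\operatorname{Ker}(M_g)$. Conversely, if $g\cdot f=0$ almost everywhere, then at each $\omega\in\Omega\setminus Z_g$ we have $g(\omega)\neq 0$, forcing $f(\omega)=0$; hence $f|_{\Omega\setminus Z_g}=0$. This yields the stated description of the kernel. To obtain the injectivity criterion I would identify $\{f\in L_p(\Omega,\mu):f|_{\Omega\setminus Z_g}=0\}$ with $L_p(Z_g,\mu|_{Z_g})$ via proposition \ref{LpSpDecomp} applied to the decomposition $\Omega=Z_g\cup(\Omega\setminus Z_g)$: the kernel is precisely the first summand. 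Then $M_g$ is injective iff this space is trivial, and $L_p(Z_g,\mu|_{Z_g})=\{0\}$ exactly when $\mu(Z_g)=0$.

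For part 2) the containment is immediate: for every $f$, the function $M_g(f)=g\cdot f$ vanishes on $Z_g$ since $g|_{Z_g}=0$, so $\operatorname{Im}(M_g)\subset\{h\in L_q(\Omega,\mu):h|_{Z_g}=0\}$. For the surjectivity consequence I would argue by contraposition: assuming $\mu(Z_g)>0$, I would pick a nonzero $h\in L_q(\Omega,\mu)$ supported in $Z_g$, so that $h|_{Z_g}\neq 0$; by the containment just proved $h\notin\operatorname{Im}(M_g)$, whence $M_g$ is not surjective. Contraposing gives that surjectivity forces $\mu(Z_g)=0$.

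The one delicate point, common to both parts, is producing the nonzero witness functions, i.e.\ establishing that $L_r(Z_g,\mu|_{Z_g})=\{0\}$ iff $\mu(Z_g)=0$ for $r\in\{p,q\}$. When $\mu(Z_g)>0$ and $r=\infty$ one may simply take $\chi_{Z_g}$, which has norm $1$; for $r<\infty$ one instead needs a subset $E\subset Z_g$ with $0<\mu(E)<+\infty$, whose existence is guaranteed by the $\sigma$-finiteness standing in the ambient setting of the paper, and then $\chi_E$ is a nonzero element of norm $\mu(E)^{1/r}$. I expect this case bookkeeping across the range of $p$ and $q$ to be the only genuine obstacle; the algebraic content of both statements is otherwise a single pointwise observation about where $g$ vanishes.
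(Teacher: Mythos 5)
Your proof is correct and follows essentially the same route as the paper: the same pointwise observation that $g\cdot f=0$ a.e.\ forces $f=0$ a.e.\ off $Z_g$, and the same containment $\operatorname{Im}(M_g)\subset\{h:h|_{Z_g}=0\}$ for part 2). The only difference is that you spell out the witness functions $\chi_{Z_g}$ and $\chi_E$ (and the $\sigma$-finiteness needed to find $E\subset Z_g$ with $0<\mu(E)<+\infty$ when $r<\infty$), a step the paper compresses into ``clearly, $\mu(Z_g)=0$''; this is a legitimate and welcome filling-in of detail, not a different argument.
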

\begin{proof}
1) We have the following chain of equivalences
$$
f\in\operatorname{Ker}(M_g)
\Longleftrightarrow g\cdot f=0
\Longleftrightarrow f|_{\Omega\setminus Z_g}=0
$$
And we get the desired equality. 

2) Since $g|_{Z_g}=0$ then for all $f\in L_p(\Omega,\mu)$ we have $M_g(f)|_{Z_g}=(g\cdot f)|_{Z_g}=0$, thus we get the inclusion. If $M_g$ is surjective then, clearly,  $\mu(Z_g)=0$.
\end{proof}

We want to classify multiplication operators according to following definitions

\begin{definition}\label{DefNorOpType} Let $ T:E\to F$ be bounded linear operator between normed spaces $E$ and $F$, then $ T$ is called
\newline
1) \textit{$c$-topologically injective}, if there exist $c > 0$ such that for all $x \in E$ holds $\Vert  T(x)\Vert_F\geq c\Vert x\Vert_E$. 
\newline
2) isometric, if $\Vert T(x)\Vert_F=\Vert x\Vert_E$ for all $x\in E$.
\newline
3) \textit{(strictly) $c$-topologically surjective}, if for all $c'>c$ and  $y\in F$ there exist $x \in E$ such that $ T(x) = y$ and $\Vert x \Vert_E < c' \Vert y \Vert_F$ ($\Vert x \Vert_E \leq c \Vert y \Vert_F$). 
\newline
4) (strictly) coisometric, if it is contractive and (strictly) $1$-topologically surjective.
\end{definition}

\begin{remark} If the constant $c$ is out of interest then we will simply say that operator is topologically injective or topologically surjective. In this case also there is no difference between topologically surjective and strictly topologically surjective operators.
\end{remark}

For a given $E\subset \Omega$ and $f\in L_0(E,\mu|_{E})$ by $\tilde{f}$ we will denote the function such that $\tilde{f}(\omega)=f(\omega)$ if $\omega\in E$ and $\tilde{f}(\omega)=0$ otherwise.

\begin{proposition}\label{MultOpDecompDecomp} Let $(\Omega,\Sigma,\mu)$, $(\Omega,\Sigma,\nu)$ be measure spaces and $p,q\in[1,+\infty]$. Assume we have a represetation $\Omega=\bigcup_{\lambda\in\Lambda}\Omega_\lambda$ of $\Omega$ as finite disjoint union of measurable sets. Then 

1) operator $M_g$ be $c$-topologically injective for some $c>0$ if and only if operators $M_g^{\Omega_\lambda}$ are $c'$-topologically injective for all $\lambda\in\Lambda$ and some $c'>0$

2) operator $M_g$ is $c$-topologically surjective for some $c>0$ if and only if operators $M_g^{\Omega_\lambda}$ are $c'$-topologically surjective for all $\lambda\in\Lambda$ and some $c'>0$

3) if operator $M_g$ is isometric then so are $M_g^{\Omega_\lambda}$ for all $\lambda\in\Lambda$

4) if operator $M_g$ is coisometric then so are $M_g^{\Omega_\lambda}$ for all $\lambda\in\Lambda$

\end{proposition}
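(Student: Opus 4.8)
The plan is to reduce every assertion to Proposition~\ref{LpSpDecomp}, which identifies $L_p(\Omega,\mu)$ isometrically with the finite direct sum $\bigoplus_p\{L_p(\Omega_\lambda,\mu|_{\Omega_\lambda}):\lambda\in\Lambda\}$ through $f\mapsto(f|_{\Omega_\lambda})_{\lambda}$, and likewise for $L_q(\Omega,\nu)$. Under these identifications $M_g$ becomes the diagonal operator $(f_\lambda)_\lambda\mapsto(M_g^{\Omega_\lambda}f_\lambda)_\lambda$, since $M_g(f)|_{\Omega_\lambda}=(g\cdot f)|_{\Omega_\lambda}=g|_{\Omega_\lambda}\cdot f|_{\Omega_\lambda}=M_g^{\Omega_\lambda}(f|_{\Omega_\lambda})$ pointwise. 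Hence the two scalar quantities I would track throughout are $\|f\|_{L_p(\Omega,\mu)}=\bigl\|(\|f|_{\Omega_\lambda}\|_{L_p})_\lambda\bigr\|_{\ell_p(\Lambda)}$ and $\|M_gf\|_{L_q(\Omega,\nu)}=\bigl\|(\|M_g^{\Omega_\lambda}(f|_{\Omega_\lambda})\|_{L_q})_\lambda\bigr\|_{\ell_q(\Lambda)}$.

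For the necessity directions---the forward implication of 1) and 2) and the whole of 3) and 4)---I would use zero-extension and restriction, neither of which needs $\Lambda$ finite. Given $f_\lambda\in L_p(\Omega_\lambda,\mu|_{\Omega_\lambda})$, its extension $\tilde f_\lambda$ by zero satisfies $\|\tilde f_\lambda\|_{L_p(\Omega)}=\|f_\lambda\|_{L_p(\Omega_\lambda)}$ and $M_g\tilde f_\lambda=\widetilde{M_g^{\Omega_\lambda}f_\lambda}$, so a lower bound, an isometry, or a contraction estimate for $M_g$ descends to each $M_g^{\Omega_\lambda}$ with the \emph{same} constant. Dually, for surjectivity I would take $h_\lambda\in L_q(\Omega_\lambda,\nu|_{\Omega_\lambda})$, extend to $\tilde h_\lambda$, pull it back through $M_g$ to some $f$ with $\|f\|<c''\|\tilde h_\lambda\|=c''\|h_\lambda\|$, and restrict: then $M_g^{\Omega_\lambda}(f|_{\Omega_\lambda})=h_\lambda$ while $\|f|_{\Omega_\lambda}\|_{L_p}\le\|f\|_{L_p(\Omega)}$, so $M_g^{\Omega_\lambda}$ inherits $c$-topological surjectivity. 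The coisometric half of 4) is exactly the case $c=1$ of this together with the contractivity already obtained.

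The substantive content is the sufficiency directions of 1) and 2), and here the finiteness of $\Lambda$ is indispensable. Abbreviating $a_\lambda=\|f|_{\Omega_\lambda}\|_{L_p}$ and $b_\lambda=\|M_g^{\Omega_\lambda}(f|_{\Omega_\lambda})\|_{L_q}$, a coordinatewise bound $b_\lambda\ge c'a_\lambda$ gives $\|(b_\lambda)\|_{\ell_q(\Lambda)}\ge c'\|(a_\lambda)\|_{\ell_q(\Lambda)}$ by monotonicity of the $\ell_q$-norm on nonnegative vectors; but the domain is measured in $\ell_p(\Lambda)$, not $\ell_q(\Lambda)$. At this point I invoke Proposition~\ref{FinDimlpEquivNorms}: because $\Lambda$ is finite, $\|(a_\lambda)\|_{\ell_p(\Lambda)}\le C_{p,q}\|(a_\lambda)\|_{\ell_q(\Lambda)}$, so $\|M_gf\|_{L_q}=\|(b_\lambda)\|_{\ell_q}\ge c'C_{p,q}^{-1}\|f\|_{L_p}$ and $M_g$ is $c'C_{p,q}^{-1}$-topologically injective. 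The surjectivity case runs in parallel: solve $M_g^{\Omega_\lambda}f_\lambda=h|_{\Omega_\lambda}$ coordinatewise with $\|f_\lambda\|<c''\|h|_{\Omega_\lambda}\|$, assemble $f=(f_\lambda)_\lambda$, and push the resulting $\ell_p$-estimate back through $\|\cdot\|_{\ell_p(\Lambda)}\le C_{p,q}\|\cdot\|_{\ell_q(\Lambda)}$ to conclude that $M_g$ is $c'C_{p,q}$-topologically surjective.

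The remaining care is bookkeeping. In the surjectivity arguments one must respect the strict inequalities of Definition~\ref{DefNorOpType}, choosing the auxiliary constant $c''$ strictly between $c'$ and the prescribed target and handling the trivial vector $h=0$ separately by $f=0$; and one should note that all constants are uniform over the finitely many indices $\lambda$. I expect the genuine crux to be the comparison of the $\ell_p$- and $\ell_q$-norms on $\mathbb{C}^\Lambda$, which is precisely why the hypothesis insists on a \emph{finite} partition: for infinite $\Lambda$ the constant $C_{p,q}$ need not exist and the sufficiency directions break down.
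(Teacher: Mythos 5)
Your proposal is correct and follows essentially the same route as the paper's proof: the necessity directions (and parts 3 and 4) via zero-extension and restriction with the same constant, and the sufficiency directions of 1) and 2) by viewing $M_g$ as the diagonal of the $M_g^{\Omega_\lambda}$ under Proposition~\ref{LpSpDecomp} and passing between the $\ell_p(\Lambda)$- and $\ell_q(\Lambda)$-norms via Proposition~\ref{FinDimlpEquivNorms}, yielding the same constants $c'C_{p,q}^{-1}$ and $c'C_{p,q}$. Your explicit handling of the strict-inequality bookkeeping and the $h|_{\Omega_\lambda}=0$ case is slightly more careful than the paper's, but the argument is the same.
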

\begin{proof}
1) Let $M_g$ is $c$-topologically injective. Fix $\lambda\in\Lambda$ and $f\in L_p(\Omega_\lambda,\mu|_{\Omega_\lambda})$, then 
$$
\Vert M_g^{\Omega_\lambda}(f)\Vert_{L_q(\Omega_\lambda,\nu|_{\Omega_\lambda})}
=\Vert g\cdot \tilde{f}\Vert_{L_q(\Omega,\nu)}
\geq c\Vert\tilde{f}\Vert_{L_p(\Omega,\mu)}
=c\Vert f\Vert_{L_p(\Omega_\lambda,\mu|_{\Omega_\lambda})}
$$
So $M_g^{\Omega_\lambda}$ is $c$-topologically injective for all $\lambda\in\Lambda$. 

Conversely, assume operators $\{M_g^{\Omega_\Lambda}:\lambda\in\Lambda\}$ are $c'$-topologically injective. Let $f\in L_p(\Omega,\mu)$. Using propositions \ref{LpSpDecomp}, \ref{FinDimlpEquivNorms} we get
$$
\Vert M_g(f)\Vert_{L_q(\Omega,\nu)}
=\left\Vert\left(\Vert M_g^{\Omega_\lambda}(f|_{\Omega_\lambda})\Vert_{L_q(\Omega_\lambda,\nu|_{\Omega_\lambda})}:\lambda\in\Lambda\right)\right\Vert_{\ell_q(\Lambda)}
\geq c'\left\Vert\left(\Vert f|_{\Omega_\lambda}\Vert_{L_p(\Omega_\lambda,\mu|_{\Omega_\lambda})}:\lambda\in\Lambda\right)\right\Vert_{\ell_q(\Lambda)}
$$
$$
\geq c' C_{p,q}^{-1}\left\Vert\left(\Vert f|_{\Omega_\lambda}\Vert_{L_p(\Omega_\lambda,\mu|_{\Omega_\lambda})}:\lambda\in\Lambda\right)\right\Vert_{\ell_p(\Lambda)}
=c'C_{p,q}^{-1}\Vert f\Vert_{L_p(\Omega,\mu)}
$$
Since $f$ is arbitrary $M_g$ is $c$-topologically injective for $c=c'C_{p,q}^{-1}>0$.

2) Let $M_g$ is $c$-topologically surjective. Fix $\lambda\in\Lambda$ and $h\in L_q(\Omega_\lambda,\nu|_{\Omega_\lambda})$. Then there exist $\tilde{f}\in L_p(\Omega,\mu)$ such that $M_g(\tilde{f})=\tilde{h}$ and $\Vert \tilde{f}\Vert_{L_p(\Omega,\mu)}\leq c\Vert \tilde{h}\Vert_{L_q(\Omega,\nu)}$. Consider $f=\tilde{f}|_{\Omega_\lambda}$, then $M_g^{\Omega_\lambda}(f)=\tilde{h}|_{\Omega_\lambda}=h$ and $\Vert f\Vert_{L_p(\Omega_\lambda,\mu|_{\Omega_\lambda})}\leq \Vert \tilde{f}\Vert_{L_p(\Omega,\mu)}\leq c\Vert\tilde{h}\Vert_{L_q(\Omega,\nu)}=c\Vert h\Vert_{L_q(\Omega_\lambda,\nu|_{\Omega_\lambda})}$. Since $h$  is arbitrary then $M_g^{\Omega_\lambda}$ is $c$-topologically surjective for all $\lambda\in\Lambda$.

Conversely, assume operators $\{M_g^{\Omega_\Lambda}:\lambda\in\Lambda\}$ are $c'$-topologically surjective. Let $h\in L_q(\Omega,\nu)$. From assumption for each $\lambda\in\Lambda$ we have $f_\lambda\in L_p(\Omega_\lambda,\mu|_{\Omega_\lambda})$ such that $M_g^{\Omega_\lambda}(f_\lambda)=h|_{\Omega_\lambda}$ and $\Vert f_\lambda\Vert_{L_p(\Omega_\lambda,\mu|_{\Omega_\lambda})}\leq c'\Vert h|_{\Omega_\lambda}\Vert_{L_q(\Omega_\lambda,\nu|_{\Omega_\lambda})}$. Define $f\in L_0(\Omega,\mu)$ such that $f(\omega)=f_\lambda(\omega)$ if $\omega\in\Omega_\lambda$.  Using propositions \ref{LpSpDecomp}, \ref{FinDimlpEquivNorms} we get
$$
\Vert f\Vert_{L_p(\Omega,\mu)}
=\left\Vert\left(\Vert f_\lambda\Vert_{L_p(\Omega_\lambda,\mu|_{\Omega_\lambda})}:\lambda\in\Lambda\right)\right\Vert_{\ell_p(\Lambda)}
\leq c'\left\Vert\left(\Vert h|_{\Omega_\lambda}\Vert_{L_q(\Omega_\lambda,\nu|_{\Omega_\lambda})}:\lambda\in\Lambda\right)\right\Vert_{\ell_p(\Lambda)}
$$
$$
\leq c'C_{p,q}\left\Vert\left(\Vert h|_{\Omega_\lambda}\Vert_{L_q(\Omega_\lambda,\nu|_{\Omega_\lambda})}:\lambda\in\Lambda\right)\right\Vert_{\ell_q(\Lambda)}
=c'C_{p,q}\Vert h\Vert_{L_q(\Omega,\nu)}
$$
Obviously, $M_g(f)=h$. Since $h$ is arbitrary we get that $M_g$ is $c$-topologiclly surjective for $c=c'C_{p,q}>0$.

3) Fix $\lambda\in\Lambda$ and $f\in L_p(\Omega_\lambda,\mu|_{\Omega_\lambda})$, then 
$$
\Vert M_g^{\Omega_\lambda}(f)\Vert_{L_q(\Omega_\lambda,\nu|_{\Omega_\lambda})}
=\Vert g\cdot \tilde{f}\Vert_{L_q(\Omega,\nu)}
=\Vert\tilde{f}\Vert_{L_p(\Omega,\mu)}
=\Vert f\Vert_{L_p(\Omega_\lambda,\mu|_{\Omega_\lambda})}
$$
So $M_g^{\Omega_\lambda}$ is isometric for all $\lambda\in\Lambda$

4) Fix $\lambda\in\Lambda$. Since $M_g$ is coisometric it is $1$-topologically surjective and contractive. So from paragraph 2) we see that $M_g^{\Omega_\lambda}$ is $1$-topologically surjective. Let $f\in L_p(\Omega_\lambda,\mu|_{\Omega_\lambda})$. Since $M_g$ is contractive we get
$$
\Vert M_g^{\Omega_\lambda}(f)\Vert_{L_q(\Omega_\lambda,\nu|_{\Omega_\lambda})}
=\Vert M_g(\tilde{f})\chi_{\Omega_\lambda}\Vert_{L_q(\Omega,\nu)}
=\Vert M_g(\tilde{f}\chi_{\Omega_\lambda})\Vert_{L_q(\Omega,\nu)}
\leq \Vert\tilde{f}\chi_{\Omega_\lambda}\Vert_{L_p(\Omega,\mu)}
=\Vert f\Vert_{L_p(\Omega_{\lambda},\mu|_{\Omega_\lambda})}
$$
Since $M_g^{\Omega_\lambda}$ is contractive and $1$-topologically injective it is coisometric.
\end{proof}

\begin{proposition}\label{MultOpCharacBtwnTwoSingMeasSp} Let $(\Omega,\Sigma,\mu)$ and $(\Omega,\Sigma,\nu)$ be two $\sigma$-finite measure spaces. Let $p,q\in[1,+\infty]$ and $g\in L_0(\Omega,\mu)$. If $\mu\perp\nu$ then $M_g:L_p(\Omega,\mu)\to L_q(\Omega,\nu_s)$ is zero operator.
\end{proposition}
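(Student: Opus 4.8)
The plan is to reduce everything to the singularity set of the pair $(\mu,\nu)$ and to exploit that the source and target norms are supported on complementary sets. First I would apply Theorem \ref{LebMeasDecomp} to $\mu$ and $\nu$ to obtain the singular part $\nu_s$ together with a set $\Omega_s\in\Sigma$ satisfying $\mu(\Omega_s)=0$ and $\nu_s(\Omega_c)=0$, where $\Omega_c=\Omega\setminus\Omega_s$. Under the hypothesis $\mu\perp\nu$ the absolutely continuous part of the decomposition vanishes, so in fact $\nu=\nu_s$ and the target $L_q(\Omega,\nu_s)$ is literally $L_q(\Omega,\nu)$. The geometric content I want to extract is that $\mu$ is concentrated on $\Omega_c$ while $\nu_s$ is concentrated on $\Omega_s$, and these two sets are disjoint.

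Next I would fix an arbitrary $f\in L_p(\Omega,\mu)$ and, using $\mu(\Omega_s)=0$, replace its representative by the one vanishing identically on $\Omega_s$; this does not change $f$ as an element of $L_p(\Omega,\mu)$. For this representative the pointwise product $g\cdot f$ also vanishes on $\Omega_s$. Since $\nu_s$ lives on $\Omega_s$, i.e. $\nu_s(\Omega_c)=0$, the function $g\cdot f$ is $\nu_s$-almost everywhere zero, so $M_g(f)=0$ in $L_q(\Omega,\nu_s)$; concretely, for $q<\infty$ one has $\Vert M_g(f)\Vert_{L_q(\nu_s)}^q=\int_{\Omega_s}|g\cdot f|^q\,d\nu_s=0$, with the obvious essential-supremum variant when $q=\infty$. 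As $f$ is arbitrary, this yields $M_g=0$.

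The step needing the most care — and the genuine crux of the statement — is the legitimacy of replacing the representative of $f$ by one vanishing on $\Omega_s$. The difficulty is that $\Omega_s$ is $\mu$-null yet carries all the mass of $\nu_s$, so altering a function there is invisible in the source $L_p(\Omega,\mu)$ but can be decisive in the target $L_q(\Omega,\nu_s)$. I would settle this by noting that the mere well-definedness of $M_g$ as a map into $L_q(\Omega,\nu_s)$ already pins down $g$: since $\mu(\Omega_s)=0$ the indicator $\chi_{\Omega_s}$ vanishes $\mu$-almost everywhere and hence represents $0\in L_p(\Omega,\mu)$, so its image must coincide with the image of $0$, forcing $g\cdot\chi_{\Omega_s}=0$, i.e. $g=0$, $\nu_s$-almost everywhere. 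That single identity gives $M_g(f)=g\cdot f=0$ for every $f$ outright, so the well-definedness tacitly assumed in naming the operator $M_g$ and the conclusion $M_g=0$ are, in the end, the same fact.
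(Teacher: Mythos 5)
Your proposal is correct and follows essentially the paper's own route: both arguments extract from singularity a set $\Omega_s$ with $\mu(\Omega_s)=0$ and $\nu_s(\Omega\setminus\Omega_s)=0$, and then exploit well-definedness of $M_g$ on $\mu$-equivalence classes — the paper via $M_g(f)=M_g(f\cdot\chi_{\Omega_c})=g\cdot f\cdot\chi_{\Omega_c}=0$ in $L_q(\Omega,\nu)$, you via $M_g(\chi_{\Omega_s})=M_g(0)=0$ forcing $g=0$ $\nu_s$-almost everywhere, which is the same mechanism. Your explicit discussion of why changing representatives on the $\mu$-null, $\nu_s$-conull set is the genuine crux is a welcome clarification of a point the paper passes over silently, but it does not change the substance of the argument.
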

\begin{proof} Since $\mu\perp\nu$, then there exist $\Omega_s\in\Sigma$ such that $\mu(\Omega_s)=\nu(\Omega_c)=0$, where $\Omega_c=\Omega\setminus\Omega_a$. Since $\mu(\Omega_s)=0$, then $\chi_{\Omega_c}=\chi_{\Omega}$ in $L_p(\Omega,\mu)$ and $\chi_{\Omega_c}=0$ in $L_q(\Omega,\nu)$. Now for all $f\in L_p(\Omega,\mu)$ we have $M_g(f)=M_g(f\cdot \chi_{\Omega})=M_g(f\cdot \chi_{\Omega_c})=g\cdot f\cdot\chi_{\Omega_c}=0$. Since $f$ is arbitrary $M_g=0$.
\end{proof}

\section{Properties of multiplication operators}

\subsection{Topologically injective operators}

This is the main section for subsequent study of multiplication operators. In the end we will show that topologically injective multiplication operators are isomorphisms or invertible from the left.

\begin{proposition}\label{MulpOpPropIfPeqqualsQ} Let $(\Omega,\Sigma,\mu)$ be a measure space and $g\in L_0(\Omega,\mu)$. Let $p=q$, then 

1) $M_g\in\mathcal{B}(L_p(\Omega,\mu))$ if and only if $g\in L_\infty(\Omega,\mu)$. 

2) $M_g$ is an isomorphism if and only if $C\geq |g|\geq c$ for some $C,c>0$.
\end{proposition}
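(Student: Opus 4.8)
The plan is to prove both statements by comparing $M_g$ with the pointwise multiplier estimate, handling $p<\infty$ and $p=\infty$ in parallel, and then to obtain the isomorphism criterion by exhibiting $M_{1/g}$ as an explicit two-sided inverse.

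For part 1 the easy implication is ($\Leftarrow$): if $g\in L_\infty(\Omega,\mu)$ then $|g\cdot f|\leq\|g\|_\infty|f|$ $\mu$-a.e., whence $\|M_g f\|_p\leq\|g\|_\infty\|f\|_p$ for every $f\in L_p(\Omega,\mu)$ (for $p=\infty$ this is immediate from the essential-supremum bound on a product), so $M_g$ is bounded with $\|M_g\|\leq\|g\|_\infty$. For the reverse implication I would argue by contraposition, producing a test function that violates the operator bound. If $M_g$ is bounded with $\|M_g\|=K$ but $\|g\|_\infty>K$, choose $\varepsilon>0$ so that $E=\{|g|>K+\varepsilon\}$ has $\mu(E)>0$. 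When $p=\infty$ the indicator $\chi_E$ already gives $\|M_g\chi_E\|_\infty\geq(K+\varepsilon)\|\chi_E\|_\infty$, a contradiction. When $p<\infty$ I first pass to a subset $F\subset E$ with $0<\mu(F)<\infty$ — this is exactly where $\sigma$-finiteness (semifiniteness) of $\mu$ enters — and then $f=\chi_F\in L_p$ satisfies $\|M_g f\|_p\geq(K+\varepsilon)\mu(F)^{1/p}=(K+\varepsilon)\|f\|_p$, again contradicting $\|M_g\|=K$. Thus $\|g\|_\infty\leq\|M_g\|<\infty$, so $g\in L_\infty$; the same computation simultaneously records the sharp identity $\|M_g\|=\|g\|_\infty$.

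For part 2 the direction ($\Leftarrow$) is purely structural: if $c\leq|g|\leq C$ $\mu$-a.e. with $c,C>0$, then $g\in L_\infty$ and $1/g$ is well defined $\mu$-a.e. with $|1/g|\leq c^{-1}$, so $1/g\in L_\infty$ as well. By part 1 both $M_g$ and $M_{1/g}$ are bounded, and since $M_g M_{1/g}=M_{g\cdot g^{-1}}=\mathrm{id}=M_{1/g}M_g$, the operator $M_{1/g}$ is a two-sided inverse, so $M_g$ is an isomorphism. For ($\Rightarrow$), suppose $M_g$ is an isomorphism. Boundedness together with part 1 gives $|g|\leq C:=\|g\|_\infty<\infty$ a.e., with $C>0$ since $M_g\neq0$. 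For the lower bound I would use that an isomorphism is bounded below: with $c:=\|M_g^{-1}\|^{-1}>0$ one has $\|M_g f\|_p\geq c\|f\|_p$ for all $f$. I then reuse the level-set argument of part 1: if $\mu(\{|g|<c\})>0$ then some $E=\{|g|<c-1/n\}$ has positive measure, and taking $f=\chi_E$ (for $p=\infty$) or $f=\chi_F$ with $F\subset E$, $0<\mu(F)<\infty$ (for $p<\infty$) yields $\|M_g f\|_p\leq(c-1/n)\|f\|_p<c\|f\|_p$, contradicting the lower bound. Hence $|g|\geq c$ a.e., which combined with the upper bound gives $c\leq|g|\leq C$.

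The main obstacle is the converse half of part 1 and its reuse in part 2: passing from a global operator-norm bound to a pointwise essential bound on $g$ requires test functions concentrated on the level sets $\{|g|>\lambda\}$, and for finite $p$ these functions must be supported on a set of finite positive measure. This is the only step needing measure-theoretic structure beyond "measure space," and it is exactly what $\sigma$-finiteness supplies. Everything else reduces to the elementary multiplier algebra $M_g M_h=M_{gh}$, which makes the inverse in part 2 completely explicit.
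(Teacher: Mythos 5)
Your proposal is correct, and its skeleton coincides with the paper's: boundedness of $M_g$ is equivalent to $g\in L_\infty$ via indicator test functions on level sets of $|g|$, and the isomorphism criterion comes from the multiplier identity $M_gM_{1/g}=M_{1/g}M_g=\mathrm{id}$. There are two points where you diverge, both to your advantage. First, in the converse half of part 1 the paper simply takes $\chi_E$ for an arbitrary positive-measure set $E$ on which $|g|>\Vert M_g\Vert$, tacitly assuming $\chi_E\in L_p$; for $p<\infty$ this needs $\mu(E)<\infty$, which you secure by passing to a subset $F\subset E$ with $0<\mu(F)<\infty$, correctly identifying semifiniteness ($\sigma$-finiteness, which the paper assumes everywhere else but omits from this proposition's hypotheses) as the exact ingredient; the paper's version as written has a latent gap for non-semifinite measures, where $M_g$ can be bounded (even zero, when $L_p=\{0\}$) with $g\notin L_\infty$. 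Second, for the forward direction of part 2 the paper identifies $M_g^{-1}=M_{1/g}$ ``provided $g$ is invertible'' and applies part 1 to $1/g$; this requires knowing $\mu(g^{-1}(\{0\}))=0$, which follows from injectivity via Proposition \ref{MultpOpSurjInjDesc} but is not cited. You instead use that an isomorphism is bounded below by $c=\Vert M_g^{-1}\Vert^{-1}$ and rerun the level-set argument on $\{|g|<c-1/n\}$, obtaining $|g|\geq c$ directly without ever having to prove that the set-theoretic inverse of $M_g$ is again a multiplication operator. The paper's route is shorter and yields the formula for the inverse as a byproduct; yours is more self-contained and patches the two small gaps, at the cost of repeating the test-function computation.
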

\begin{proof}
1) Assume $M_g\in\mathcal{B}(L_p(\Omega,\mu))$. Assume there exist $E\in\Sigma$ with $\mu(E)>0$ such that $|g|_E|>\Vert M_g\Vert$, then
$$
\Vert M_g(\chi_E)\Vert_{L_p(\Omega,\mu)}
=\Vert g\cdot\chi_E\Vert_{L_p(\Omega,\mu)}
>\Vert M_g\Vert\Vert\chi_E\Vert_{L_p(\Omega,\mu)}
$$
Contradiction, hence for all $E\in\Sigma$ with $\mu(E)>0$ we have $|g|_E|\leq \Vert M_g\Vert$ i.e.  $|g|\leq \Vert M_g\Vert$. Thus $g\in L_\infty(\Omega,\mu)$

Conversely. let $g\in L_\infty(\Omega,\mu)$ then $|g|\leq C$ for some $C>0$. Now for any $p\in[1,+\infty]$ we have and all $f\in L_p(\Omega,\mu)$ we have
$$
\Vert M_g(f)\Vert_{L_p(\Omega,\mu)}
=\Vert g\cdot f\Vert_{L_p(\Omega,\mu)}
\leq C\Vert f\Vert_{L_p(\Omega,\mu)}
$$
Hence $M_g\in\mathcal{B}(L_p(\Omega,\mu))$

2) Note that $M_g^{-1}=M_{1/g}$ as linear maps provided $g$ is invertible. Now $M_g$ is an isomorphism if and only if $M_g$ and $M_g^{-1}$ are bounded operators. From previous paragraph and equality $M_g^{-1}=M_{1/g}$ we see that it is equivalent to boundedness of $g$ and $1/g$. This is equivalent to $C\geq|g|\geq c$ for some $C,c>0$
.
\end{proof}

\begin{proposition}\label{EquivMultOp} Let $(\Omega,\Sigma,\mu)$ be a $\sigma$-finite purely atomic measure space. Let $p,q\in[1,+\infty]$ and $g\in L_0(\Omega,\mu)$, then the operator $\tilde{M}_{\tilde{g}}:=\tilde{I}_q M_g\tilde{I}_p^{-1}\in\mathcal{B}(\ell_p(\Lambda),\ell_q(\Lambda))$ is a multiplication operator by the function $\tilde{g}:\Lambda\to\mathbb{C},\lambda\mapsto \mu(\Omega_\lambda)^{1/q-1/p-1}\int_{\Omega_\lambda}f(\omega)d\mu(\omega)$ where $\{\Omega_\lambda:\lambda\in\Lambda\}$ is at most countable decomposition of $\Omega$ into pairwise disjoint atoms guranteed by proposition \ref{DescOfLpSpOnPureAtomMeasSp}.
\end{proposition}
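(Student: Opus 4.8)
The plan is to unwind the two isometric isomorphisms $\tilde I_p$ and $\tilde I_q$ from Proposition \ref{DescOfLpSpOnPureAtomMeasSp} completely explicitly and then simply compose them with $M_g$. The first observation, which makes everything tractable, is that by part (1) of Lemma \ref{FuncDescOnAtom} every function that is integrable on an atom is $\mu$-almost everywhere constant there; in particular $g$ is constant on each $\Omega_\lambda$ with value $g_\lambda := \mu(\Omega_\lambda)^{-1}\int_{\Omega_\lambda} g\, d\mu$. Thus multiplication by $g$ acts atomwise as multiplication by the scalar $g_\lambda$, which is exactly the structure needed to produce a diagonal operator on the sequence spaces.

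Next I would compute $\tilde I_p^{-1}$ explicitly. Given $x\in\ell_p(\Lambda)$, I claim $f:=\tilde I_p^{-1}(x)$ is the function taking the constant value $\mu(\Omega_\lambda)^{-1/p}x(\lambda)$ on each $\Omega_\lambda$. Indeed, by the definitions of $\tilde I_p$ and of $J_p$, one has $\tilde I_p(f)(\lambda)=\mu(\Omega_\lambda)^{1/p-1}\int_{\Omega_\lambda} f\, d\mu$, and since $f$ is constant on the atom equal to some $f_\lambda$ this reads $\tilde I_p(f)(\lambda)=\mu(\Omega_\lambda)^{1/p}f_\lambda$; setting this equal to $x(\lambda)$ gives $f_\lambda=\mu(\Omega_\lambda)^{-1/p}x(\lambda)$, and part (1) of Lemma \ref{FuncDescOnAtom} guarantees this is the only such $f$.

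Then I would push this $f$ through $M_g$ and $\tilde I_q$. Multiplying $f$ by $g$ produces the function that is constant on $\Omega_\lambda$ with value $g_\lambda\mu(\Omega_\lambda)^{-1/p}x(\lambda)$, and applying $\tilde I_q$ multiplies the atom value by $\mu(\Omega_\lambda)^{1/q}$, so that $\tilde M_{\tilde g}(x)(\lambda)=g_\lambda\,\mu(\Omega_\lambda)^{1/q-1/p}x(\lambda)$. Substituting $g_\lambda=\mu(\Omega_\lambda)^{-1}\int_{\Omega_\lambda} g\, d\mu$ collapses the exponents to $\mu(\Omega_\lambda)^{1/q-1/p-1}\int_{\Omega_\lambda}g\, d\mu=\tilde g(\lambda)$, giving $\tilde M_{\tilde g}(x)(\lambda)=\tilde g(\lambda)x(\lambda)$ for every $\lambda$, which is precisely the assertion that $\tilde M_{\tilde g}$ is the multiplication operator by $\tilde g$.

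There is no genuine obstacle here; the whole statement is a bookkeeping computation, and the only place demanding care is keeping the three powers of $\mu(\Omega_\lambda)$ (from $\tilde I_p^{-1}$, from averaging $g$, and from $\tilde I_q$) straight so that they combine into the single exponent $1/q-1/p-1$. I would also remark that the integrand in the displayed formula for $\tilde g$ must be $g$ rather than $f$ (the statement contains an evident misprint), that well-definedness of the atomwise description rests on Lemma \ref{FuncDescOnAtom}, and that membership of $\tilde M_{\tilde g}$ in $\mathcal B(\ell_p(\Lambda),\ell_q(\Lambda))$ is inherited from $M_g$ together with the fact that $\tilde I_p$ and $\tilde I_q$ are isometric isomorphisms.
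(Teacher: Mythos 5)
Your proposal is correct and follows essentially the same route as the paper's proof: both unwind $\tilde I_p^{-1}$ atomwise to the constant function $\mu(\Omega_\lambda)^{-1/p}x(\lambda)\chi_{\Omega_\lambda}$, push it through $M_g$ and then through $J_q$, and collect the three powers of $\mu(\Omega_\lambda)$ into the exponent $1/q-1/p-1$; your extra preliminary step of replacing $g$ by its constant atom value $g_\lambda$ via Lemma \ref{FuncDescOnAtom} is harmless but not needed, since the paper simply leaves $\int_{\Omega_\lambda}g\,d\mu$ in place (and note the lemma is stated for $L_p$ functions, so one should observe, as boundedness of $M_g$ on $\chi_{\Omega_\lambda}$ gives, that $g$ is indeed integrable on each atom). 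You are also right that the $f$ in the displayed formula for $\tilde g$ is a misprint for $g$, as the paper's own computation confirms.
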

\begin{proof} Let $p,q\in[1,+\infty]$. For any $x\in\ell_p(\Lambda)$ we have
$$
\tilde{M}_{\tilde{g}}(x)(\lambda)
=(\tilde{I}_q((M_g\tilde{I}_p^{-1})(x))(\lambda)
=J_q(M_g(\tilde{I}_p^{-1}(x))|_{\Omega_\lambda})(1)
$$
$$
=J_q((g\cdot\tilde{I}_p^{-1}(x))|_{\Omega_\lambda})(1)
=\mu(\Omega_\lambda)^{1/q-1}\int_{\Omega_\lambda}(g|_{\Omega_\lambda}\cdot \tilde{I}_p^{-1}(x)|_{\Omega_\lambda})(\omega)d\mu(\omega)
$$
$$
=\mu(\Omega_\lambda)^{1/q-1}\int_{\Omega_\lambda}(g\cdot \mu(\Omega)^{-1/p}x(\lambda)\chi_{\Omega_{\lambda}})(\omega)d\mu(\omega)
=x(\lambda)\mu(\Omega_\lambda)^{1/q-1/p-1}\int_{\Omega_\lambda} g(\omega)d\mu(\omega)
$$
Thus $\tilde{M}_{\tilde{g}}$ is a multiplication operator where $\tilde{g}(\lambda)=\mu(\Omega_\lambda)^{1/q-1/p-1}\int_{\Omega_\lambda} g(\omega)d\mu(\omega)$
\end{proof}

Since $\tilde{I}_p$ and $\tilde{I}_q$ are isometric isomorphisms then $M_g$ is topologically injective if and only if $\tilde{M}_{\tilde{g}}$ is topologically injective. 

\begin{proposition}\label{TopInjMultOpCharacOnPureAtomMeasSp} Let $(\Omega,\Sigma,\mu)$ be $\sigma$-finite purely atomic measure space, $p,q\in[1,+\infty]$ and $g\in L_0(\Omega,\mu)$ then the following are equivalent

i) $M_g\in\mathcal{B}(L_p(\Omega,\mu),L_q(\Omega,\mu))$ is topologically injective 

ii) $|g|\geq c$ for some $c>0$ and if $p\neq q$ the space $(\Omega,\Sigma,\mu)$ have finitely many atoms.
\end{proposition}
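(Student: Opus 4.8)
The plan is to transport the problem to sequence spaces, where a multiplication operator becomes a diagonal operator whose behaviour is classical. By Proposition \ref{DescOfLpSpOnPureAtomMeasSp} we identify $L_p(\Omega,\mu)\cong_1\ell_p(\Lambda)$ and $L_q(\Omega,\mu)\cong_1\ell_q(\Lambda)$, and by Proposition \ref{EquivMultOp} together with the remark following it, $M_g$ is topologically injective if and only if the diagonal operator $\tilde{M}_{\tilde{g}}\colon\ell_p(\Lambda)\to\ell_q(\Lambda)$ is. Using Lemma \ref{FuncDescOnAtom}(1), the symbol $g$ is $\mu$-a.e. equal to a constant $g_\lambda$ on each atom $\Omega_\lambda$, so the diagonal symbol simplifies to $|\tilde{g}(\lambda)|=|g_\lambda|\,\mu(\Omega_\lambda)^{1/q-1/p}$. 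This isolates the essential object and makes visible that the extra factor $\mu(\Omega_\lambda)^{1/q-1/p}$, which is trivial exactly when $p=q$, is what drives the dichotomy in condition (ii).

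For the implication (i)$\Rightarrow$(ii) I would first test on the standard basis vectors: topological injectivity with constant $c$ forces $|\tilde{g}(\lambda)|=\|\tilde{M}_{\tilde{g}}(e_\lambda)\|_q\geq c$ for every $\lambda$, hence $\inf_\lambda|\tilde{g}(\lambda)|\geq c>0$; when $p=q$ this is already exactly $|g|\geq c$. The heart of the matter is to show that $p\neq q$ forces $\Lambda$ to be finite. Suppose $\Lambda$ is infinite. If $p>q$, then boundedness of a diagonal operator $\ell_p\to\ell_q$ forces (by Hölder) $\tilde{g}\in\ell_r$ with $1/r=1/q-1/p>0$, so $\tilde{g}(\lambda)\to0$, contradicting $|\tilde{g}(\lambda)|\geq c$. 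If $p<q$, I would instead use the spread test vectors $x^{(n)}=\sum_{i=1}^{n}e_{\lambda_i}$ on $n$ distinct atoms: then $\|x^{(n)}\|_p=n^{1/p}$ while $\|\tilde{M}_{\tilde{g}}(x^{(n)})\|_q\leq\|\tilde{M}_{\tilde{g}}\|\,n^{1/q}$, so the ratio $\|\tilde{M}_{\tilde{g}}(x^{(n)})\|_q/\|x^{(n)}\|_p\leq\|\tilde{M}_{\tilde{g}}\|\,n^{1/q-1/p}\to0$, again contradicting the bounded-below property. Thus $p\neq q$ implies $\Lambda$ finite, and then the finitely many factors $\mu(\Omega_\lambda)^{1/p-1/q}$ are bounded above, so $\inf_\lambda|\tilde{g}(\lambda)|\geq c$ upgrades to $\inf_\lambda|g_\lambda|>0$, that is $|g|\geq c'$ for some $c'>0$.

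For (ii)$\Rightarrow$(i) I would split on the two regimes that (ii) already separates. If $p=q$, the symbol is $\tilde{g}(\lambda)=g_\lambda$ with $\inf_\lambda|g_\lambda|\geq c$, whence $\|\tilde{M}_{\tilde{g}}(x)\|_p\geq c\|x\|_p$ immediately gives topological injectivity (boundedness being the standing $M_g\in\mathcal{B}$ requirement, equivalently $g\in L_\infty$ by Proposition \ref{MulpOpPropIfPeqqualsQ}). If $p\neq q$ with finitely many atoms, then $\ell_p(\Lambda)$ and $\ell_q(\Lambda)$ are finite-dimensional with equivalent norms by Proposition \ref{FinDimlpEquivNorms}; the hypothesis $|g|\geq c$ makes every $g_\lambda\neq0$, hence every $\tilde{g}(\lambda)\neq0$, so $\tilde{M}_{\tilde{g}}$ is an injective operator on a finite-dimensional space and is therefore automatically bounded and bounded below. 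Transporting back through the isometric isomorphisms $\tilde{I}_p,\tilde{I}_q$ yields (i).

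I expect the main obstacle to be the $p\neq q$, infinite-$\Lambda$ case, precisely because its two sub-directions demand genuinely different arguments: for $p>q$ the summability coming from boundedness destroys $\inf_\lambda|\tilde{g}(\lambda)|$, whereas for $p<q$ that infimum can perfectly well stay positive (the inclusion $\ell_p\hookrightarrow\ell_q$ being the model case), so one must instead exploit the mismatch between the $\ell_p$ and $\ell_q$ norms on spreading supports rather than any pointwise decay. The only other delicate point is tracking constants when passing between $\inf_\lambda|\tilde{g}(\lambda)|\geq c$ and $|g|\geq c'$ through the atom-measure factors, but this is routine once $\Lambda$ is known to be finite.
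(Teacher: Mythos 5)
Your proof is correct, and while it shares the paper's overall strategy (transport to $\ell_p(\Lambda)$ via Propositions \ref{DescOfLpSpOnPureAtomMeasSp} and \ref{EquivMultOp}, then analyse the diagonal operator $\tilde{M}_{\tilde{g}}$), it handles the crucial step --- ruling out infinite $\Lambda$ when $p\neq q$ --- by a genuinely different and more elementary route. The paper splits this into three subcases: for $p,q<+\infty$ it invokes Pitt's theorem (no isomorphic embedding of $\ell_p$ into $\ell_q$), for $q=+\infty$ it runs a cardinality estimate on $\sum_{\lambda\in F}e_\lambda$, and for $p=+\infty$ it uses a separability argument. You instead exploit the fact that the operator is \emph{diagonal}: for $p>q$, boundedness forces $\tilde{g}\in\ell_r$ with $1/r=1/q-1/p>0$, hence decay of the symbol, contradicting $\inf_\lambda|\tilde{g}(\lambda)|\geq c$; for $p<q$, the spreading vectors $x^{(n)}=\sum_{i=1}^n e_{\lambda_i}$ satisfy $\Vert\tilde{M}_{\tilde{g}}(x^{(n)})\Vert_q\leq\Vert\tilde{M}_{\tilde{g}}\Vert\, n^{1/q}$ (via $\sup_\lambda|\tilde{g}(\lambda)|\leq\Vert\tilde{M}_{\tilde{g}}\Vert$) while bounded-belowness demands $c\,n^{1/p}$, and $n^{1/q-1/p}\to 0$. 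Both of your cases uniformly cover the endpoints $p=+\infty$ and $q=+\infty$, so you avoid the paper's three-way case split and its reliance on a deep theorem; the trade-off is that the paper's Pitt argument proves more (no topologically injective operator of \emph{any} kind exists between $\ell_p$ and $\ell_q$ for distinct finite $p,q$), whereas yours is tailored to multiplication operators --- which is all the proposition needs. One small point of hygiene: your phrase ``by H\"older'' for the $p>q$ step is really the \emph{converse} direction of H\"older's inequality; the implication ``bounded diagonal $\Rightarrow\tilde{g}\in\ell_r$'' needs the standard test vectors $x_F=\sum_{\lambda\in F}|\tilde{g}(\lambda)|^{r/p}e_\lambda$ over finite $F$, which yields $\left(\sum_{F}|\tilde{g}|^r\right)^{1/q-1/p}\leq\Vert\tilde{M}_{\tilde{g}}\Vert$; this is classical but worth writing out. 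The remaining steps (basis-vector test for $|\tilde{g}(\lambda)|\geq c$, the conversion $|g_\lambda|=|\tilde{g}(\lambda)|\mu(\Omega_\lambda)^{1/p-1/q}$ with finitely many atoms, and the finite-dimensional argument for the converse) match the paper's in substance, and your explicit remark that boundedness of $M_g$ is the standing hypothesis rather than a consequence of $|g|\geq c$ when $p=q$ is a point the paper leaves implicit.
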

\begin{proof}
$i)\implies ii)$ Assume $M_g$ is topologically injective, then so is $\tilde{M}_{\tilde{g}}$, i.e. $\Vert\tilde{M}_{\tilde{g}}(x)\Vert_{\ell_q(\Lambda)}\geq c'\Vert x\Vert_{\ell_p(\Lambda)}$ for all $x\in\ell_p(\Lambda)$ and some $c'>0$. Now we use at most countable decomposition $\{\Omega_\lambda:\lambda\in\Lambda\}$ of $\Omega$ into pairwise disjoint atoms of $\Omega$ given by proposition \ref{DescOfLpSpOnPureAtomMeasSp}. Now we will consider two big cases

1) Let $p\neq q$. Assume $\Lambda$ is countable. 

1.1) Consider subcase $p,q<+\infty$. If $\Lambda$ is countable, then we get a  contradiction, because by Pitt's theorem (\cite{AlbiacKalton}, 2.1.6) there is no embeddings between $\ell_p(\Lambda)$ and $\ell_q(\Lambda)$ spaces for countable $\Lambda$ and $p,q\in[1,+\infty)$, $p\neq q$. 

1.2) Consider subcase $q=+\infty$. Take any finite family $F\subset\Lambda$, then 
$$
\sup_{\lambda\in\Lambda}|\tilde{g}(\lambda)|
\geq\max_{\lambda\in F}|\tilde{g}(\lambda)|
=\left\Vert\tilde{M}_{\tilde{g}}\left(\sum_{\lambda\in F}e_\lambda\right)\right\Vert_{\ell_\infty(\Lambda)}
\geq c'\left\Vert\sum_{\lambda\in F}e_\lambda\right\Vert_{\ell_p(\Lambda)}
=c'\operatorname{Card}(F)
$$
Since $\Lambda$ is countable $\sup_{\lambda\in\Lambda}|\tilde{g}(\lambda)|\geq c'\sup_{F\subset\Lambda}\operatorname{Card}(F)=+\infty$. On the other hand, since $\tilde{M}_{\tilde{g}}$ is bounded we have 
$$\sup_{\lambda\in\Lambda}|\tilde{g}(\lambda)|
=\sup_{\lambda\in\Lambda}\Vert\tilde{M}_{\tilde{g}}(e_\lambda)\Vert_{\ell_\infty(\Lambda)}
\leq\Vert\tilde{M}_{\tilde{g}}\Vert\Vert e_\lambda\Vert_{\ell_p(\Lambda)}
=\Vert\tilde{M}_{\tilde{g}}\Vert<+\infty
$$
Contradiction.

1.3) Consider subcase $p=+\infty$. Since $\Lambda$ is countable then $\ell_\infty(\Lambda)$ is non separable and $\ell_q(\Lambda)$ is separable. As $\tilde{M}_{\tilde{g}}$ is topologically injective, then $\operatorname{Im}(\tilde{M}_{\tilde{g}})$ is non separable subspace of $\ell_q(\Lambda)$. Contradiction.

In all subcases we got contradiction, hence $\Lambda$ is finite i.e.  $(\Omega,\Sigma,\mu)$ have finitely many atoms. From lemma \ref{FuncDescOnAtom} we see that $g$ is completely determined by its values $k_\lambda\in\mathbb{C}$ on atoms $\{\Omega_\lambda:\lambda\in\Lambda\}$. By proposition \ref{MultpOpSurjInjDesc} the function $g$ is zero only on sets of measre zero, so $k_\lambda\neq 0$ for all $\lambda\in\Lambda$. Since $\Lambda$ is finite we conclude $|g|\geq c$ for $c=\min_{\lambda\in\Lambda}|k_\lambda|$.

2) Let $p=q$. Fix $\lambda\in\Lambda$, then
$$
|\tilde{g}(\lambda)|
=\Vert \tilde{g}\cdot e_\lambda\Vert_{\ell_q(\Lambda)}
=\Vert \tilde{M}_{\tilde{g}}(e_\lambda)\Vert_{\ell_q(\Lambda)}
\geq c'\Vert e_\lambda\Vert_{\ell_p(\Lambda)}
=c'
$$
From lemma \ref{FuncDescOnAtom} for $\mu$-almost all $\omega\in\Omega_\lambda$ we have
$$
|g(\omega)|
=\left|\mu(\Omega_\lambda)^{-1}\int_{\Omega_\lambda}g(\omega)d\mu(\omega)\right|
=\left|\mu(\Omega_\lambda)^{-1}\mu(\Omega_\lambda)^{1+1/p-1/p}\tilde{g}(\lambda)\right|
=|\tilde{g}(\lambda)|\geq c'
$$
Since $\lambda\in\Lambda$ is arbitrary and $\Omega=\bigcup_{\lambda\in\Lambda}\Omega_\lambda$, then $|g|\geq c'$.

$ii)\implies i)$ Assume $|g|\geq c$ for $c>0$. Then from proposition \ref{EquivMultOp} we see that $|\tilde{g}|\geq c$.

1) Let $p\neq q$. Then we additionally assume that $(\Omega,\Sigma,\mu)$ have finitely many atoms. Now from proposition \ref{DescOfLpSpOnPureAtomMeasSp} we get that $L_p(\Omega,\mu)$ is finite dimensional. From assumption on $g$ we see it is have no zero values. Hence operator $M_g$ is topologically injective. 

2) Let $p=q$, then for all $x\in\ell_p(\Lambda)$ we have
$$
\Vert \tilde{M}_{\tilde{g}}\Vert_{\ell_p(\Lambda)}=\Vert g\cdot x\Vert_{\ell_p(\Lambda)}\geq c\Vert x\Vert_{\ell_p(\Lambda)}
$$
so $\tilde{M}_{\tilde{g}}$ is topologically injective and so does $M_g$.
\end{proof}

\begin{proposition}\label{TopInjMultOpCharacOnNonAtomMeasSp} Let $(\Omega,\Sigma,\mu)$ be a nonatomic measure space, $p,q\in[1,+\infty]$ and $g\in L_0(\Omega,\mu)$ then the following are equivalent

i) $M_g\in\mathcal{B}(L_p(\Omega,\mu),L_q(\Omega,\mu))$ is topologically injective 

ii) $|g|\geq c$ for some $c>0$ and $p=q$.
\end{proposition}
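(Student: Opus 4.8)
The plan is to prove the equivalence by establishing both implications, with the bulk of the work residing in the direction $i)\implies ii)$, specifically in showing that topological injectivity forces $p=q$ on a nonatomic space. For the easy direction $ii)\implies i)$, I would assume $|g|\geq c$ and $p=q$; then for all $f\in L_p(\Omega,\mu)$ we have $\Vert M_g(f)\Vert_{L_p}=\Vert g\cdot f\Vert_{L_p}\geq c\Vert f\Vert_{L_p}$ by a pointwise estimate $|g\cdot f|\geq c|f|$ and monotonicity of the integral, so $M_g$ is $c$-topologically injective.

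For the harder direction, I would first extract from topological injectivity that $|g|\geq c$ for some $c>0$. Since $\mu(Z_g)=0$ by Proposition \ref{MultpOpSurjInjDesc} (injectivity forces the zero set to be null), $g$ is nonzero almost everywhere; I would then argue that the set $\{|g|<c'\}$ must be null for suitable $c'$, for otherwise, using Theorem \ref{ContOfNonAtmMeas} to carve out a subset $E$ of small but positive and finite measure inside this set, the normalised indicator $\chi_E/\Vert\chi_E\Vert_{L_p}$ would violate the lower bound $\Vert M_g(f)\Vert_{L_q}\geq c\Vert f\Vert_{L_p}$. This uses nonatomicity crucially to produce test functions supported on sets where $g$ is small.

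The main obstacle is proving $p=q$. The plan is to suppose $p\neq q$ and derive a contradiction by exploiting the nonatomic structure to embed a sequence of disjoint indicator functions whose norm behaviour under $M_g$ is incompatible in the two cases $p<q$ and $p>q$. Concretely, since $\mu(\Omega)$ restricted to a positive-measure piece can be split by Theorem \ref{ContOfNonAtmMeas} into $n$ pairwise disjoint sets of equal measure, I would build for each $n$ a normalised function (a scaled sum of indicators of these pieces) and compare $\Vert M_g(f)\Vert_{L_q}$ with $c\Vert f\Vert_{L_p}$. The scaling exponents $1/p$ and $1/q$ enter the norm of such a function differently, so letting $n\to\infty$ (or choosing the sets' measures to tend to $0$ or $\infty$) makes the ratio $\Vert M_g(f)\Vert_{L_q}/\Vert f\Vert_{L_p}$ tend to $0$, contradicting the lower bound. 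The delicate point is handling the endpoint cases $p=\infty$ or $q=\infty$ separately, where the essential-supremum norm replaces the integral norm and the contradiction must be arranged through either non-separability of the image or an explicit unbounded-ratio computation, analogous to subcases 1.2) and 1.3) in the proof of Proposition \ref{TopInjMultOpCharacOnPureAtomMeasSp} but now realised via nonatomic test sets rather than atoms.
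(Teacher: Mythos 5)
Your direction $ii)\implies i)$ and your extraction of $|g|\geq c$ in the case $p=q$ coincide with the paper's argument, and your ruling out of $p>q$ via small indicator sets is essentially the paper's case 1. The genuine gap is in the case $p<q$, and it already infects your preliminary step. If $E\subset\{|g|<c'\}$ has positive finite measure, then $\Vert M_g(\chi_E)\Vert_{L_q}/\Vert\chi_E\Vert_{L_p}\leq c'\mu(E)^{1/q-1/p}$, and since $1/q-1/p<0$ when $p<q$, carving out subsets of \emph{small} measure makes this upper bound blow up rather than drop below $c$; so small sets inside $\{|g|<c'\}$ never violate the lower bound, and "$\{|g|<c'\}$ is null" cannot be established this way when $p<q$. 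Worse, your main device fails there too: for $f$ supported on a finite-measure set $E$ with $|g|\geq c'$ on $E$, H\"older gives $\Vert f\Vert_{L_p}\leq\mu(E)^{1/p-1/q}\Vert f\Vert_{L_q}$, hence $\Vert M_g(f)\Vert_{L_q}\geq c'\mu(E)^{1/q-1/p}\Vert f\Vert_{L_p}$ --- the ratio is bounded \emph{below} uniformly, and indeed your equal-measure splitting into $n$ pieces with constant coefficients yields a ratio independent of $n$ (the factors $m^{1/q-1/p}$ and $n^{1/q-1/p}$ cancel). So "letting $n\to\infty$ makes the ratio tend to $0$, contradicting the lower bound" is false for $p<q$, and your hedge about sending measures to $\infty$ does not help, since nothing guarantees infinite measure inside $\{|g|\geq c'\}$.

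The correct mechanism for $p<q$, which is the paper's case 3, runs in the opposite direction: if $|g|>c'$ on some set of positive measure, then indicators of subsets of measure $2^{-n}$ give $\Vert M_g(\chi_{E_n})\Vert_{L_q}/\Vert\chi_{E_n}\Vert_{L_p}\geq c'2^{n(1/p-1/q)}\to+\infty$, contradicting \emph{boundedness} of $M_g$, not topological injectivity; hence $g=0$ almost everywhere, and then Proposition \ref{MultpOpSurjInjDesc} contradicts injectivity. You would need to restructure your plan around this (the paper never proves $|g|\geq c$ outside the case $p=q$; in the other two cases it derives outright contradictions). Two smaller points: for $p>q$ your ratio-to-zero computation needs an \emph{upper} bound $|g|\leq C$ on the test sets, which the paper extracts from well-definedness of $M_g$ (some set $\{|g|\leq C\}$ of positive measure exists), not the lower bound from your first step; and the non-separability device you import from subcase 1.3 of Proposition \ref{TopInjMultOpCharacOnPureAtomMeasSp} is unnecessary here --- with the convention $1/\infty=0$ the paper's single computation via Theorem \ref{ContOfNonAtmMeas} covers the endpoint cases uniformly.
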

\begin{proof}
$i)\implies ii)$ Assume $M_g$ is topologically injective i.e. $\Vert M_g(f)\Vert_{L_q(\Omega,\mu)}\geq c\Vert f\Vert_{L_p(\Omega,\mu)}$ for some $c>0$ and all $f\in L_p(\Omega,\mu)$. We will consider three cases.

1)  Let $p>q$. There exist $C>0$ and $E\in\Sigma$ with $\mu(E)>0$ such that $|g|_E|\leq C$, otherwise $M_g$ is not well defined. By theorem \ref{ContOfNonAtmMeas} we have a sequence $\{E_n:n\in\mathbb{N}\}\subset\Sigma$ of subsets of $E$ such that $\mu(E_n)=2^{-n}$. Then since $p>q$ we get
$$
c
\leq\frac{\Vert M_g(\chi_{E_n})\Vert_{L_q(\Omega,\mu)}}{\Vert \chi_{E_n}\Vert_{L_p(\Omega,\mu)}}
\leq\frac{C\chi_{E_n}\Vert_{L_q(\Omega,\mu)}}{\Vert \chi_{E_n}\Vert_{L_p(\Omega,\mu)}}
\leq C\mu(E_n)^{1/q-1/p}
$$
$$
c
\leq\inf_{n\in\mathbb{N}}C\mu(E_n)^{1/q-1/p}
=C\inf_{n\in\mathbb{N}} 2^{n(1/p-1/q)}=0
$$
Contradiction, so in this case $M_g$ can not be topologically injective.

2) Let $p=q$. Fix $c'<c$. Assume there exist $E\in\Sigma$ with $\mu(E)>0$ and $|g|_{E}|<c'$, then
$$
\Vert M_g(\chi_{E})\Vert_{L_p(\Omega,\mu)}
=\Vert g \cdot\chi_{E}\Vert_{L_p(\Omega,\mu)}
\leq c' \Vert \chi_{E}\Vert_{L_p(\Omega,\mu)}
<c\Vert \chi_{E}\Vert_{L_p(\Omega,\mu)}
$$
Contradiction. Since $c'<c$ is arbitrary we conclude $|g|_E|\geq c$ for any $E\in\Sigma$ with $\mu(E)>0$. Thus $|g|\geq c$.

3) Let $p<q$. Assume we have some $c'>0$ and $E\in\Sigma$ such that $\mu(E)>0$, $|g|_E|>c'$. By theorem \ref{ContOfNonAtmMeas} we have a sequence  $\{E_n:n\in\mathbb{N}\}\subset\Sigma$ of subsets of $E$ such that $\mu(E_n)=2^{-n}$. Then from inequality $p<q$ we get
$$
\Vert M_g\Vert
\geq\frac{\Vert M_g(\chi_{E_n})\Vert_{L_q(\Omega,\mu)}}{\Vert \chi_{E_n}\Vert_{L_p(\Omega,\mu)}}
\geq\frac{c'\Vert\chi_{E_n}\Vert_{L_q(\Omega,\mu)}}{\Vert \chi_{E_n}\Vert_{L_p(\Omega,\mu)}}
\geq c'\mu(E_n)^{1/q-1/p}
$$
$$
\Vert M_g\Vert
\geq\sup_{n\in\mathbb{N}}c'\mu(E_n)^{1/q-1/p}
\geq c'\sup_{n\in\mathbb{N}}2^{n(1/p-1/q)}
=+\infty
$$
Contradiction, hence $g=0$. In this case by proposition \ref{MultpOpSurjInjDesc} operator $M_g$ is not topologically injective.

$ii)\implies i)$ Conversely, assume $|g|\geq c$ for $c>0$ and $p=q$. Then for all $f\in L_p(\Omega,\mu)$ we have
$$
\Vert M_g(f)\Vert_{L_p(\Omega,\mu)}
=\Vert g\cdot f\Vert_{L_p(\Omega,\mu)}
\geq c\Vert f\Vert_{L_p(\Omega,\mu)}
$$
So $M_g$ is topologically injective.
\end{proof}

\begin{theorem}\label{TopInjMultOpCharacOnMeasSp} Let $(\Omega,\Sigma,\mu)$ be a $\sigma$-finite measure space, $p,q\in[1,+\infty]$ and $g\in L_0(\Omega,\Sigma,\mu)$, then the following are equivalent 

i) $M_g\in\mathcal{B}(L_p(\Omega,\mu),L_q(\Omega,\mu))$ is toplogically injective 

ii) $M_g$ is an isomorphism 

iii) $|g|\geq c$ for some $c>0$, if $p\neq q$ the space $(\Omega,\Sigma,\mu)$ consist of finitely many atoms
\end{theorem}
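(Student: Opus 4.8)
The plan is to reduce the general $\sigma$-finite case to the purely atomic and nonatomic cases already settled in Propositions \ref{TopInjMultOpCharacOnPureAtomMeasSp} and \ref{TopInjMultOpCharacOnNonAtomMeasSp}, using the decomposition of the measure from Proposition \ref{DescOfLpSpOnMeasSp}. The implication $ii)\Rightarrow i)$ needs no work, since an isomorphism is bounded below and hence topologically injective.

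For $i)\Leftrightarrow iii)$ I would first write $\Omega=\Omega_{na}\cup\Omega_{a}$ as the disjoint union from Proposition \ref{DescOfLpSpOnMeasSp}, with $\mu|_{\Omega_{na}}$ nonatomic and $\mu|_{\Omega_{a}}$ purely atomic. Because this is a finite (two-element) disjoint decomposition, Proposition \ref{MultOpDecompDecomp}(1) tells us that $M_g$ is topologically injective exactly when both restrictions $M_g^{\Omega_{na}}$ and $M_g^{\Omega_{a}}$ are. I would then split on whether $p=q$. If $p=q$, Propositions \ref{TopInjMultOpCharacOnNonAtomMeasSp} and \ref{TopInjMultOpCharacOnPureAtomMeasSp} each say the relevant restriction is topologically injective iff $|g|$ is bounded below on that piece, with no constraint on the number of atoms; taking the minimum of the two lower bounds gives $|g|\geq c$ globally, which is exactly condition iii) in this regime. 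If $p\neq q$, Proposition \ref{TopInjMultOpCharacOnNonAtomMeasSp} shows the nonatomic restriction can be topologically injective only when it acts on a trivial space, i.e. $\mu(\Omega_{na})=0$, while Proposition \ref{TopInjMultOpCharacOnPureAtomMeasSp} forces both $|g|\geq c$ and finitely many atoms on $\Omega_{a}$; together these are precisely the statement that $\Omega$ consists of finitely many atoms with $|g|\geq c$, matching iii).

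To close the loop I would prove $iii)\Rightarrow ii)$. When $p=q$, boundedness of $M_g$ forces $g\in L_\infty(\Omega,\mu)$ by Proposition \ref{MulpOpPropIfPeqqualsQ}(1), so $c\leq|g|\leq C$, and Proposition \ref{MulpOpPropIfPeqqualsQ}(2) gives that $M_g$ is an isomorphism. When $p\neq q$, the finitely-many-atoms hypothesis makes $L_p(\Omega,\mu)$ and $L_q(\Omega,\mu)$ both finite-dimensional of the common dimension $\operatorname{Card}(\Lambda)$ via Proposition \ref{DescOfLpSpOnPureAtomMeasSp}; since $|g|\geq c$ gives $\mu(Z_g)=0$ and hence injectivity by Proposition \ref{MultpOpSurjInjDesc}(1), $M_g$ is an injective linear map between finite-dimensional spaces of equal dimension, thus a bijection, and in finite dimensions a bounded bijection is automatically an isomorphism.

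The hard part will not be any single estimate, since the analytic content already lives in the earlier propositions; it will rather be the careful bookkeeping of the $p=q$ versus $p\neq q$ split and, within it, the degenerate possibilities that $\Omega_{na}$ or $\Omega_{a}$ is null. In those degenerate cases the corresponding $L_p$ space collapses to $\{0\}$ and the restricted operator is vacuously topologically injective, and I would need to verify that this reading stays consistent both with the cited propositions and with the way condition iii) is phrased.
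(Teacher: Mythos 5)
Your proposal is correct and follows essentially the same route as the paper: the same decomposition $\Omega=\Omega_{na}\cup\Omega_a$ from Proposition \ref{DescOfLpSpOnMeasSp}, reduction via Proposition \ref{MultOpDecompDecomp}, the characterisations in Propositions \ref{TopInjMultOpCharacOnPureAtomMeasSp} and \ref{TopInjMultOpCharacOnNonAtomMeasSp}, and for the isomorphism claim the identical split into Proposition \ref{MulpOpPropIfPeqqualsQ} when $p=q$ and the injective-map-between-equal-finite-dimensional-spaces argument when $p\neq q$ (your $iii)\Rightarrow ii)$ is the paper's $i)\Rightarrow ii)$, which also passes through iii)). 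Your explicit treatment of the degenerate case $\mu(\Omega_{na})=0$, where the restricted operator is vacuously topologically injective, is a detail the paper leaves implicit and is handled correctly.
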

\begin{proof} $i)\Longleftrightarrow iii)$ By proposition \ref{DescOfLpSpOnMeasSp} we have decomposition $\Omega=\Omega_{a}\cup\Omega_{na}$, where $(\Omega_{na},\Sigma|_{\Omega_{na}},\mu|_{\Omega_{na}})$ is a nonatomic measure space and $(\Omega_{a},\Sigma|_{\Omega_{a}},\mu|_{\Omega_{a}})$ is a purely atomic measure space. By proposition \ref{MultOpDecompDecomp} operator $M_g$ is topologically injective if and only if so does $M_g^{\Omega_{a}}$ and $M_g^{\Omega_{na}}$. Propositions \ref{TopInjMultOpCharacOnPureAtomMeasSp}, \ref{TopInjMultOpCharacOnNonAtomMeasSp} give necessary and sufficient conditions for $M_g^{\Omega_{a}}$ and $M_g^{\Omega_{na}}$ to be topologically injective. So we get the result. 

$i)\implies ii)$ Assume $M_g$ is topologically injective. If $p=q$ from considerations above it follows that $|g|\geq c$ for some $c>0$. Since $M_g$ is bounded from proposition \ref{MulpOpPropIfPeqqualsQ} we also have $C\geq |g|$ for some $C>0$. Now from the same proposition we conclude that $M_g$ is an isomorphism because $C\geq|g|\geq c$. Assume $p\neq q$, then from previous paragraph the space $(\Omega,\Sigma,\mu)$ consist of finite amount of atoms and $g$ is injective. Hence from proposition \ref{DescOfLpSpOnPureAtomMeasSp} we get $\operatorname{dim}(L_p(\Omega,\Sigma,\mu))=\operatorname{dim}(\ell_p(\Lambda))=\operatorname{Card}(\Lambda)<+\infty$. Similarly, $\operatorname{dim}(L_q(\Omega,\Sigma,\mu))=\operatorname{Card}(\Lambda)<+\infty$ Since $g$ is injective by proposition \ref{MultpOpSurjInjDesc} operator $M_g$ is injective. Thus $M_g$ is an injective operator between finite dimensional spaces of equal dimension. Hence it is an isomorphism.

$ii)\implies i)$ Conversely, if $M_g$ is an isomorphism, clearly, it is topologically injective.
\end{proof}

\begin{proposition}\label{TopInjMultOpCharacBtwnTwoContMeasSp} Let $(\Omega,\Sigma,\mu)$ be a $\sigma$-finite measure space, $p,q\in[1,+\infty]$ and $g,\rho\in L_0(\Omega,\mu)$ and $\rho$ is non negative then the following are equivalent 

i) $M_g\in\mathcal{B}(L_p(\Omega,\mu),L_q(\Omega,\rho\cdot\mu))$ is topologically injective 

ii) $M_g$ is an isomorphism 

iii) $\rho$ is  positive, $|g\cdot \rho^{1/q}|\geq c$ for some $c>0$, if $p\neq q$ the space $(\Omega,\Sigma,\mu)$ consist of finitely many atoms.
\end{proposition}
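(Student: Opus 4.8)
The plan is to reduce the statement to the already-proven Theorem \ref{TopInjMultOpCharacOnMeasSp} by absorbing the density $\rho$ into the symbol $g$ through the change-of-density isometry of Proposition \ref{ChngOfDenst}. That proposition, however, applies only to a strictly positive $\rho$, whereas here $\rho$ is merely non-negative; so the genuinely new work is to deduce the positivity of $\rho$ from topological injectivity of $M_g$, after which everything follows by transporting the known classification across an isometric isomorphism.

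First I would extract positivity of $\rho$ from $i)$. Suppose instead that $\mu(Z)>0$ for $Z=\rho^{-1}(\{0\})$. By $\sigma$-finiteness write $\Omega=\bigcup_{n\in\mathbb{N}}F_n$ with $\mu(F_n)<+\infty$; then $\mu(Z\cap F_n)>0$ for some $n$, and $E=Z\cap F_n$ satisfies $0<\mu(E)<+\infty$. Now $\chi_E$ is a nonzero element of $L_p(\Omega,\mu)$, whereas $M_g(\chi_E)=g\chi_E$ vanishes in $L_q(\Omega,\rho\cdot\mu)$: for $q<+\infty$ because $\int_E|g|^q\rho\,d\mu=0$, and for $q=+\infty$ because $(\rho\cdot\mu)(E)=\int_E\rho\,d\mu=0$ makes $E$ a $\rho\cdot\mu$-null set. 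This contradicts $c$-topological injectivity, so $\rho>0$ $\mu$-almost everywhere.

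With $\rho$ positive, Proposition \ref{ChngOfDenst} supplies an isometric isomorphism $\bar{I}_q:L_q(\Omega,\mu)\to L_q(\Omega,\rho\cdot\mu)$, $h\mapsto\rho^{-1/q}h$, whose inverse is $h\mapsto\rho^{1/q}h$. A direct computation then gives $\bar{I}_q^{-1}\circ M_g=M_{g\rho^{1/q}}$ as an operator $L_p(\Omega,\mu)\to L_q(\Omega,\mu)$ (for $q=+\infty$ one has $\rho^{1/q}=1$ and $\bar{I}_\infty=\mathrm{id}$, so this reads $M_g=M_{g\rho^{1/q}}$). Since $\bar{I}_q$ is an isometric isomorphism, $M_g$ is topologically injective, respectively an isomorphism, if and only if $M_{g\rho^{1/q}}$ has the corresponding property.

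Finally, for positive $\rho$, Theorem \ref{TopInjMultOpCharacOnMeasSp} applied to $M_{g\rho^{1/q}}:L_p(\Omega,\mu)\to L_q(\Omega,\mu)$ asserts that topological injectivity, invertibility, and the condition ``$|g\rho^{1/q}|\geq c$ for some $c>0$, with finitely many atoms when $p\neq q$'' are all equivalent. Transporting the first two properties back across $\bar{I}_q$ and adjoining the positivity of $\rho$ established above chains these into the desired equivalence $i)\Leftrightarrow ii)\Leftrightarrow iii)$, the implication $ii)\Rightarrow i)$ being immediate in any case. The only genuinely new step is forcing $\rho>0$ out of $i)$; everything else is a routine passage of the established classification through an isometric isomorphism.
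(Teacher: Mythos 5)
Your proposal is correct and follows essentially the same route as the paper: deduce positivity of $\rho$ by testing $M_g$ on a characteristic function supported in $\rho^{-1}(\{0\})$, then absorb the density via the isometric isomorphism $\bar{I}_q$ of Proposition \ref{ChngOfDenst} and invoke Theorem \ref{TopInjMultOpCharacOnMeasSp} for $M_{g\rho^{1/q}}$. Your version is in fact slightly more careful than the paper's at one point: by intersecting $\rho^{-1}(\{0\})$ with a finite-measure piece of the $\sigma$-finite decomposition you guarantee $\chi_E\in L_p(\Omega,\mu)$ even for $p<\infty$, a detail the paper's proof glosses over.
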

\begin{proof} $i)\implies iii)$ Consider set $E=\rho^{-1}(\{0\})$. Assume $\mu(E)>0$ then $\chi_E\neq 0$ in $L_p(\Omega,\mu)$. On the other hand $(\rho\cdot\mu)(E)=\int_E\rho(\omega)d\mu(\omega)=0$, so $\chi_E=0$ in $L_q(\Omega,\rho\cdot\mu)$ and $M_g(\chi_E)=g\cdot\chi_E=0$ in $L_q(\Omega,\rho\cdot\mu)$. Thus we see that $M_g$ is not injective and as the consequence it is not topologically injective. Contradiction, so $\mu(E)=0$ and $\rho$ is  positive. Hence by proposition \ref{ChngOfDenst} we have an isometric isomorphism $\bar{I}_q:L_q(\Omega,\mu)\to L_q(\Omega,\rho\cdot\mu),f\mapsto \rho^{-1/q}\cdot f$. Obviously $M_{g\cdot\rho^{1/q}}=\bar{I}_q^{-1} M_g\in\mathcal{B}(L_p(\Omega,\mu),L_q(\Omega,\mu))$. Since $\bar{I}_q$ is an isometric isomorphism and $M_g$ is topologically injective, then $M_{g\cdot \rho^{1/q}}$ is topologically injective. From theorem \ref{TopInjMultOpCharacOnMeasSp} we get that $|g\cdot\rho^{1/q}|\geq c$ for some $c>0$ and if $p\neq q$ the space is $(\Omega,\Sigma,\mu)$ consist of finite amount of atoms.

$iii)\implies i)$ By theorem \ref{TopInjMultOpCharacOnMeasSp} operator $M_{g\cdot\rho^{1/q}}$ is topologically injective. Since $\rho$ is positive by proposition \ref{ChngOfDenst} we have an isometric isomorohism $\bar{I}_q$. Then from equality $M_g=\bar{I}_q M_{g\cdot\rho^{1/q}}$ it follows that $M_g$ is also topologically injective.

$i)\implies ii)$ As we proved above this implies that $M_{g\cdot\rho^{1/q}}$ is topologically injective and $\bar{I}_q$ is an isometric isomorphism. By theorem \ref{TopInjMultOpCharacOnMeasSp} $M_{g\cdot\rho^{1/q}}$ is an isomorphism. Since $M_g=\bar{I}_q M_{g\cdot\rho^{1/q}}$ and $\bar{I}_q$ is an isometric isomorphism, then $M_g$ is also an isomorphism.

$ii)\implies i)$ If $M_g$ is an isomorphism, then, obviously, it is topologically injective.
\end{proof}

\begin{theorem}\label{TopInjMultOpCharacBtwnTwoMeasSp} Let $(\Omega,\Sigma,\mu)$, $(\Omega,\Sigma,\nu)$ be two $\sigma$-finite measure spaces, $p,q\in[1,+\infty]$ and $g\in L_0(\Omega,\mu)$, then the following are eqivalent

i) $M_g\in\mathcal{B}(L_p(\Omega,\mu), L_q(\Omega,\nu))$ is topologically injective 

ii) $M_g^{\Omega_c}$ is an isomorphism

iii) $\rho_{\nu,\mu}$ is positive, $|g\cdot\rho_{\nu,\mu}^{1/q}|_{\Omega_c}|\geq c$ for some $c>0$, if $p\neq q$ the space $(\Omega,\Sigma,\mu)$ consist of finitely many atoms.
\end{theorem}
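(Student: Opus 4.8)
The plan is to reduce the two-measure situation to the single-measure-with-density situation of Proposition \ref{TopInjMultOpCharacBtwnTwoContMeasSp} by splitting $\Omega$ along the Lebesgue decomposition of $\nu$ with respect to $\mu$. First I would invoke Theorem \ref{LebMeasDecomp} to obtain the density $\rho_{\nu,\mu}$, the singular measure $\nu_s$ and the set $\Omega_s$ with $\mu(\Omega_s)=\nu_s(\Omega_c)=0$, where $\Omega_c=\Omega\setminus\Omega_s$ and $\nu=\rho_{\nu,\mu}\cdot\mu+\nu_s$. On $\Omega_c$ the singular part vanishes, so $\nu|_{\Omega_c}=\rho_{\nu,\mu}|_{\Omega_c}\cdot\mu|_{\Omega_c}$; and since $\mu(\Omega_s)=0$, Proposition \ref{LpSpDecomp} identifies $L_p(\Omega,\mu)$ isometrically with $L_p(\Omega_c,\mu|_{\Omega_c})$, the $\Omega_s$-summand being the zero space.

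Next I would apply Proposition \ref{MultOpDecompDecomp} to the two-element decomposition $\Omega=\Omega_c\cup\Omega_s$: the operator $M_g$ is topologically injective if and only if both $M_g^{\Omega_c}$ and $M_g^{\Omega_s}$ are. The singular piece is harmless, since the domain $L_p(\Omega_s,\mu|_{\Omega_s})$ is the zero space (as $\mu(\Omega_s)=0$), so $M_g^{\Omega_s}$ is vacuously topologically injective. Here the only genuinely delicate point arises: because $g$ and representatives of $f$ are fixed only up to $\mu$-null sets, their product is a priori ambiguous on $\Omega_s$, so one must check that the component of $M_g$ landing in $L_q(\Omega_s,\nu|_{\Omega_s})$ contributes nothing. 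This is exactly what Proposition \ref{MultOpCharacBtwnTwoSingMeasSp} secures, applied to the restrictions on $\Omega_s$, where $\mu|_{\Omega_s}=0$ is trivially singular to $\nu|_{\Omega_s}$. Hence topological injectivity of $M_g$ is equivalent to that of $M_g^{\Omega_c}\colon L_p(\Omega_c,\mu|_{\Omega_c})\to L_q(\Omega_c,\rho_{\nu,\mu}|_{\Omega_c}\cdot\mu|_{\Omega_c})$.

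Now $M_g^{\Omega_c}$ is precisely a multiplication operator between the $L_p$ and $L_q$ spaces of the single $\sigma$-finite space $(\Omega_c,\Sigma|_{\Omega_c},\mu|_{\Omega_c})$ with a change of density by $\rho=\rho_{\nu,\mu}|_{\Omega_c}$, which is nonnegative. Thus Proposition \ref{TopInjMultOpCharacBtwnTwoContMeasSp} applies verbatim and yields the equivalence of ``$M_g^{\Omega_c}$ topologically injective'', ``$M_g^{\Omega_c}$ an isomorphism'', and the conditions that $\rho_{\nu,\mu}|_{\Omega_c}$ be positive, that $|g\cdot\rho_{\nu,\mu}^{1/q}|_{\Omega_c}|\geq c$ for some $c>0$, and, when $p\neq q$, that $(\Omega_c,\Sigma|_{\Omega_c},\mu|_{\Omega_c})$ have finitely many atoms. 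Combined with the previous paragraph this gives (i)$\iff$(ii) and (i)$\iff$(iii) at once.

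Finally I would dispose of two cosmetic translations between $\Omega_c$ and $\Omega$. Since $\mu(\Omega_s)=0$ and every atom has strictly positive measure, each atom of $(\Omega,\Sigma,\mu)$ coincides up to a $\mu$-null set with an atom contained in $\Omega_c$, so ``$\Omega_c$ has finitely many atoms'' is the same statement as ``$\Omega$ has finitely many atoms''; and positivity of $\rho_{\nu,\mu}$ is a $\mu$-a.e. assertion, hence unaffected by the $\mu$-null set $\Omega_s$. This matches (iii) as stated. I expect the bulk of the work to be purely bookkeeping once the reduction is set up; the one spot demanding care is the singular part, which is why the argument leans on Proposition \ref{MultOpCharacBtwnTwoSingMeasSp} to legitimise discarding the $\Omega_s$-component before citing Propositions \ref{MultOpDecompDecomp} and \ref{TopInjMultOpCharacBtwnTwoContMeasSp}.
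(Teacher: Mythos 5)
Your proposal is correct and follows essentially the same route as the paper's proof: split $\Omega=\Omega_c\cup\Omega_s$ along the Lebesgue decomposition of Theorem \ref{LebMeasDecomp}, use Proposition \ref{MultOpDecompDecomp} to reduce to the two restricted operators, dismiss $M_g^{\Omega_s}$ via Proposition \ref{MultOpCharacBtwnTwoSingMeasSp} together with $L_p(\Omega_s,\mu|_{\Omega_s})=\{0\}$, and conclude by applying Proposition \ref{TopInjMultOpCharacBtwnTwoContMeasSp} to $M_g^{\Omega_c}$. Your closing paragraph on transferring the atom count and the positivity of $\rho_{\nu,\mu}$ between $\Omega_c$ and $\Omega$ only makes explicit bookkeeping that the paper leaves implicit.
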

\begin{proof}
By proposition \ref{MultOpDecompDecomp} operator $M_g$ is topologically injective if and only if operators $M_g^{\Omega_c}:L_p(\Omega_c,\mu|_{\Omega_c})\to L_q(\Omega_c,\rho_{\nu,\mu}\cdot\mu|_{\Omega_c})$ and $M_g^{\Omega_s}:L_p(\Omega_s,\mu|_{\Omega_s})\to L_q(\Omega_s,\nu_s|_{\Omega_s})$ are topologically injective. By proposition \ref{MultOpCharacBtwnTwoSingMeasSp} operator $M_g^{\Omega_s}$ is zero. Since $\mu(\Omega_s)=0$, then $L_p(\Omega_s,\mu|_{\Omega_s})=\{0\}$. From these two facts we conclude that $M_g^{\Omega_s}$ is topologically injective. Thus topological injectivity of $M_g$ is equivalent to topological injectivity of  $M_g^{\Omega_c}$. It is remains to apply proposition \ref{TopInjMultOpCharacBtwnTwoContMeasSp}.
\end{proof}

\begin{theorem}\label{TopInjMultOpDescBtwnTwoMeasSp} Let $(\Omega,\Sigma,\mu)$, $(\Omega,\Sigma,\nu)$ be two $\sigma$-finite measure spaces, $p,q\in[1,+\infty]$ and $g\in L_0(\Omega,\mu)$, then the following are equivalent 

i) $M_g\in\mathcal{B}(L_p(\Omega,\mu),L_q(\Omega,\nu))$ is topologically injective 

ii) $M_{\chi_{\Omega_c}/g}\in\mathcal{B}(L_q(\Omega,\nu), L_p(\Omega,\mu))$ its left inverse topologically surjective operator
\end{theorem}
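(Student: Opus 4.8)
The plan is to exploit the Lebesgue-type decomposition of Theorem \ref{LebMeasDecomp}, which furnishes $\Omega=\Omega_c\cup\Omega_s$ with $\mu(\Omega_s)=\nu_s(\Omega_c)=0$ and $\nu|_{\Omega_c}=\rho_{\nu,\mu}\cdot\mu|_{\Omega_c}$, and to recognise $M_{\chi_{\Omega_c}/g}$ as the genuine inverse of $M_g$ on the part $\Omega_c$ where the two measures are mutually absolutely continuous, while it annihilates the singular part $\Omega_s$. Because $\mu(\Omega_s)=0$, Proposition \ref{LpSpDecomp} identifies $L_p(\Omega,\mu)$ with $L_p(\Omega_c,\mu|_{\Omega_c})$ and makes the restriction $h\mapsto h|_{\Omega_c}$ a contraction from $L_q(\Omega,\nu)$ onto $L_q(\Omega_c,\nu|_{\Omega_c})$; these identifications are used throughout.

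For $i)\implies ii)$ I would start from Theorem \ref{TopInjMultOpCharacBtwnTwoMeasSp}: topological injectivity of $M_g$ is equivalent to $M_g^{\Omega_c}\colon L_p(\Omega_c,\mu|_{\Omega_c})\to L_q(\Omega_c,\nu|_{\Omega_c})$ being an isomorphism, with $\rho_{\nu,\mu}$ positive and $g$ vanishing only on a $\mu$-null subset of $\Omega_c$, so that $\chi_{\Omega_c}/g$ is a well-defined measurable function. Since $\chi_{\Omega_c}$ vanishes on $\Omega_s$, the operator $M_{\chi_{\Omega_c}/g}$ factors as the restriction contraction onto $L_q(\Omega_c,\nu|_{\Omega_c})$ followed by $M_{1/g}^{\Omega_c}$; but $M_{1/g}^{\Omega_c}$ coincides with $(M_g^{\Omega_c})^{-1}$ and is therefore bounded. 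Hence $M_{\chi_{\Omega_c}/g}\in\mathcal{B}(L_q(\Omega,\nu),L_p(\Omega,\mu))$.

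It then remains to verify the two defining properties. The left-inverse identity is a one-line computation: for $f\in L_p(\Omega,\mu)$ we have $(M_{\chi_{\Omega_c}/g}M_g)(f)=(\chi_{\Omega_c}/g)\cdot g\cdot f=\chi_{\Omega_c}\cdot f=f$ in $L_p(\Omega,\mu)$, using $\mu(\Omega_s)=0$, so $M_{\chi_{\Omega_c}/g}M_g=\mathrm{id}$. Topological surjectivity is then immediate: given $f\in L_p(\Omega,\mu)$ the element $h:=M_g(f)$ satisfies $M_{\chi_{\Omega_c}/g}(h)=f$ with $\|h\|\leq\|M_g\|\,\|f\|$, so $M_{\chi_{\Omega_c}/g}$ is $\|M_g\|$-topologically surjective. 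For the converse $ii)\implies i)$ only boundedness of the left inverse is needed: for every $f$,
$$
\|f\|_{L_p(\Omega,\mu)}=\big\|M_{\chi_{\Omega_c}/g}(M_g(f))\big\|_{L_p(\Omega,\mu)}\leq\big\|M_{\chi_{\Omega_c}/g}\big\|\,\|M_g(f)\|_{L_q(\Omega,\nu)},
$$
which rearranges to topological injectivity of $M_g$.

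The step I expect to be the main obstacle is the boundedness of $M_{\chi_{\Omega_c}/g}$ in the forward direction; the two defining properties reduce to formal identities, but boundedness genuinely requires the structural conclusion of Theorem \ref{TopInjMultOpCharacBtwnTwoMeasSp} that $M_g^{\Omega_c}$ is an isomorphism (not merely injective), together with the measure bookkeeping $\nu|_{\Omega_c}=\rho_{\nu,\mu}\cdot\mu|_{\Omega_c}$ and $L_p(\Omega,\mu)=L_p(\Omega_c,\mu|_{\Omega_c})$ that lets the singular part be discarded.
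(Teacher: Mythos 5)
Your proposal is correct and follows essentially the same route as the paper's proof: the paper likewise reduces to $\Omega_c$ (via Propositions \ref{MultOpDecompDecomp} and \ref{TopInjMultOpCharacBtwnTwoContMeasSp}, which is exactly the content of Theorem \ref{TopInjMultOpCharacBtwnTwoMeasSp} that you cite), obtains boundedness of $M_{\chi_{\Omega_c}/g}$ from $(M_g^{\Omega_c})^{-1}=M_{1/g}^{\Omega_c}$ composed with the restriction contraction, verifies the left-inverse identity $M_{\chi_{\Omega_c}/g}M_g=\mathrm{id}$ using $\mu(\Omega_s)=0$, and gets topological surjectivity by taking $h=M_g(f)$. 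Your converse argument, rearranging $\Vert f\Vert\leq\Vert M_{\chi_{\Omega_c}/g}\Vert\,\Vert M_g(f)\Vert$, is also the paper's.
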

\begin{proof}
$i)\implies ii)$ By proposition \ref{MultOpDecompDecomp}  $M_g^{\Omega_c}$ is topologically injective. By proposition \ref{TopInjMultOpCharacBtwnTwoContMeasSp} operator $M_g^{\Omega_c}$ is invertible and $(M_g^{\Omega_c})^{-1}=M_{1/g}^{\Omega_c}$. Then for all $h\in L_q(\Omega,\nu)$ we have
$$
\Vert M_{\chi_{\Omega_c}/g}(h)\Vert_{L_p(\Omega,\mu)}=
\Vert M_{1/g}(h)\chi_{\Omega_c}\Vert_{L_p(\Omega,\mu)}=
\Vert M_{1/g}^{\Omega_c}(h|_{\Omega_c})\Vert_{L_p(\Omega_c,\mu|_{\Omega_c})}
$$
$$
\leq\Vert M_{1/g}^{\Omega_c}\Vert\Vert h|_{\Omega_c}\Vert_{L_q(\Omega_c,\nu|_{\Omega_c})}
\leq\Vert M_{1/g}^{\Omega_c}\Vert\Vert h\Vert_{L_q(\Omega,\nu)}
$$ 
So $M_{\chi_{\Omega_c}/g}$ is bounded. Now note that for all $f\in L_p(\Omega,\mu)$ we have 
$$
M_{\chi_{\Omega_c}/g}(M_g(f))
=M_{\chi_{\Omega_c}/g}(g\cdot f)
=(\chi_{\Omega_c}/g)\cdot g\cdot f
=f\cdot\chi_{\Omega_c}
$$
Since $\mu(\Omega\setminus\Omega_c)=0$, then $\chi_{\Omega_c}=\chi_{\Omega}$, so $M_{\chi_{\Omega_c}/g}(M_g(f))=f\cdot\chi_{\Omega_c}=f\cdot\chi_{\Omega}=f$. This means that $M_g$ have left inverse multiplication operator. Take any $f\in L_p(\Omega,\mu)$, then for $h=M_g(f)$ we have $M_{\chi_{\Omega_c}/g}(h)=f$ and $\Vert h\Vert_{L_q(\Omega,\nu)}\leq\Vert M_g\Vert\Vert f\Vert_{L_p(\Omega,\mu)}$. Since $h$ is arbitrary $M_{\chi_{\Omega_c}/g}$ is topologically surjective.

Conversely if $M_g$ have left inverse $M_{\chi_{\Omega_c}/g}$ then for all $f\in L_p(\Omega,\mu)$ we have 
$$
\Vert M_g(f)\Vert_{L_q(\Omega,\nu)}
\geq\Vert M_{\chi_{\Omega_c}/g}\Vert^{-1}\Vert M_{\chi_{\Omega_c}/g}(M_g(f))\vert_{L_p(\Omega,\mu)}
\geq\Vert M_{\chi_{\Omega_c}/g}\Vert^{-1}\Vert f\Vert_{L_p(\Omega,\mu)}
$$
So $M_g$ is topologically injective.
\end{proof}

\subsection{Isometric operators}

Obviously isometric operators are particular case of topologically injective ones, so we can use certain results of previous section.  

\begin{proposition}\label{IsomMultOpCharacOnMeasSp} Let $(\Omega,\Sigma,\mu)$ be a $\sigma$-finite measure space, $p,q\in[1,+\infty]$ and $g\in L_0(\Omega,\mu)$, then the following are equivalent 

i) $M_g\in\mathcal{B}(L_p(\Omega,\mu),L_q(\Omega,\mu))$ is an isometry 

ii) $|g|=\mu(\Omega)^{1/p-1/q}$, if $p\neq q$, then $(\Omega,\Sigma,\mu)$ consist of single atom.

\end{proposition}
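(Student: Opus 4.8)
The plan is to prove both implications while splitting throughout into the two cases $p=q$ and $p\ne q$; the case $p=q$ is elementary and the single-atom phenomenon lives entirely in the case $p\ne q$.

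For i)$\implies$ii) I would first observe that an isometry is in particular topologically injective, so Theorem~\ref{TopInjMultOpCharacOnMeasSp} applies and yields $|g|\ge c>0$ together with the fact that, when $p\ne q$, the space $(\Omega,\Sigma,\mu)$ is a finite disjoint union of atoms. If $p=q$ I would test the isometry on characteristic functions: for every $E\in\Sigma$ with $0<\mu(E)<+\infty$ the equality $\Vert M_g\chi_E\Vert_{L_p}=\Vert\chi_E\Vert_{L_p}$ reads $\int_E|g|^p\,d\mu=\mu(E)$ when $p<\infty$, and since $\Omega$ is a countable union of finite-measure sets this forces $|g|^p=1$ almost everywhere; for $p=\infty$ the same test gives that the essential supremum of $|g|$ over every such $E$ equals $1$, which likewise yields $|g|=1$. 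In either case $|g|=1=\mu(\Omega)^{1/p-1/q}$, as claimed.

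The substance is the case $p\ne q$. Writing $\Omega=\bigcup_{\lambda\in\Lambda}\Omega_\lambda$ as the finite disjoint union of its atoms, Proposition~\ref{MultOpDecompDecomp}(3) shows each restriction $M_g^{\Omega_\lambda}$ is again an isometry; since $L_p(\Omega_\lambda,\mu|_{\Omega_\lambda})$ is one-dimensional with $g$ constant on $\Omega_\lambda$ (Lemma~\ref{FuncDescOnAtom}), comparing the $L_p$- and $L_q$-norms of a function constant on $\Omega_\lambda$ forces $|g|_{\Omega_\lambda}|=\mu(\Omega_\lambda)^{1/p-1/q}$. Substituting these values back, the global isometry identity for an arbitrary $f=\sum_{\lambda}a_\lambda\chi_{\Omega_\lambda}$ collapses, after the change of variables $t_\lambda=|a_\lambda|\,\mu(\Omega_\lambda)^{1/p}$, to $\Vert t\Vert_{\ell_q(\Lambda)}=\Vert t\Vert_{\ell_p(\Lambda)}$ for every $t\in\mathbb{C}^\Lambda$. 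Because $p\ne q$ these two norms on $\mathbb{C}^\Lambda$ agree only when $\operatorname{Card}(\Lambda)=1$ (already the vector with two coordinates equal to $1$ gives $2^{1/p}\ne 2^{1/q}$), so $\Omega$ is a single atom and $|g|=\mu(\Omega)^{1/p-1/q}$. This reduction of the isometry condition to the plain equality of the $\ell_p$- and $\ell_q$-norms, together with the observation that equality fails the moment there are two atoms, is the one genuinely non-routine step; everything surrounding it is bookkeeping with constant functions on atoms.

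For the converse ii)$\implies$i) both cases are direct verifications. If $p=q$ then $|g|=1$ almost everywhere gives $\Vert g\cdot f\Vert_{L_p}=\Vert f\Vert_{L_p}$ at once (for $p=\infty$ because multiplication by a unimodular function preserves the essential supremum). If $p\ne q$ then $\Omega$ is a single atom of finite measure (Lemma~\ref{AtomDescInSigmFinMeasSp}), every $f$ equals some constant $a$, and using $\Vert f\Vert_{L_r}=|a|\,\mu(\Omega)^{1/r}$ together with $|g|=\mu(\Omega)^{1/p-1/q}$ one computes $\Vert M_g f\Vert_{L_q}=|a|\,\mu(\Omega)^{1/q}\cdot\mu(\Omega)^{1/p-1/q}=|a|\,\mu(\Omega)^{1/p}=\Vert f\Vert_{L_p}$, so $M_g$ is an isometry.
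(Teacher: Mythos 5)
Your proposal is correct and follows essentially the same route as the paper: both directions reduce to testing on characteristic functions in the case $p=q$, and for $p\neq q$ both obtain finitely many atoms from Theorem~\ref{TopInjMultOpCharacOnMeasSp} and then derive the single-atom conclusion from the same decisive computation, namely that two disjoint atoms would force $2^{1/p}=2^{1/q}$. Your reorganization (pinning down $|g|_{\Omega_\lambda}|=\mu(\Omega_\lambda)^{1/p-1/q}$ on each atom first and restating the isometry as $\Vert t\Vert_{\ell_p(\Lambda)}=\Vert t\Vert_{\ell_q(\Lambda)}$ on $\mathbb{C}^\Lambda$) is a cosmetic, slightly cleaner packaging of the paper's argument with the normalized functions $h_1,h_2$, not a different method.
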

\begin{proof} $i)\implies ii)$ Let $p=q$. Assume there exist $E\in\Sigma$ with $\mu(E)>0$ such that $|g|_E|<1$, then
$$
\Vert M_g(\chi_E)\Vert_{L_p(\Omega,\mu)}
=\Vert g\cdot\chi_E\Vert_{L_p(\Omega,\mu)}
<\Vert\chi_E\Vert_{L_p(\Omega,\mu)}
=\Vert M_g(\chi_E)\Vert_{L_p(\Omega,\mu)}
$$
Contradiction, hence for all $E\in\Sigma$ with $\mu(E)>0$ we have $|g|_E|\geq 1$ i.e.  $|g|\geq 1$. Assume there exist $E\in\Sigma$ with $\mu(E)>0$ such that $|g|_E|>1$, then
$$
\Vert M_g(\chi_E)\Vert_{L_p(\Omega,\mu)}
=\Vert g\cdot\chi_E\Vert_{L_p(\Omega,\mu)}
>\Vert\chi_E\Vert_{L_p(\Omega,\mu)}
=\Vert M_g(\chi_E)\Vert_{L_p(\Omega,\mu)}
$$
Contradiction, hence for all $E\in\Sigma$ with $\mu(E)>0$ we have $|g|_E|\leq 1$ i.e.  $|g|\leq 1$. From both inequalities we get $|g|=1=\mu(\Omega)^{1/p-1/q}$. Let $p\neq q$, then since $M_g$ is an isometry it is topologically injective. By theorem \ref{TopInjMultOpCharacOnMeasSp} the space $(\Omega,\Sigma,\mu)$ consist of finitely many atoms. Assume there is at least two disjoint atoms, say $\Omega_1$ and $\Omega_2$. By lemma \ref{AtomDescInSigmFinMeasSp} they are of finite measure, so we can consider respective normalized functions $h_k=\Vert\chi_{\Omega_k}\Vert_{L_p(\Omega,\mu)^{-1}}\chi_{\Omega_k}$ where $k\in\{1,2\}$. Since they these atoms are disjoint $h_1h_2=0$ and as the result $M_g(h_1)M_g(h_2)=0$. Note that for any $r\in[1,+\infty]$ and all $f_1,f_2\in L_r(\Omega,\mu)$ such that $f_1f_2=0$ we have 
$$
\Vert f_1+f_2\Vert_{L_r(\Omega,\mu)}
=\left\Vert\left(\Vert f_\lambda\Vert_{L_r(\Omega,\mu)}:\lambda\in\{1,2\}\right)\right\Vert_{\ell_r(\{1,2\})}
$$
Hence
$$
\Vert M_g(h_1+h_2)\Vert_{L_q(\Omega,\mu)}
=\Vert h_1+h_2\Vert_{L_p(\Omega,\mu)}
=\left\Vert\left( 1 :\lambda\in\{1,2\}\right)\right\Vert_{\ell_p(\{1,2\})}
=2^{1/p}
$$
But on the other hand
$$
\Vert M_g(h_1+h_2)\Vert_{L_q(\Omega,\mu)}
=\Vert M_g(h_1)+M_g(h_2)\Vert_{L_q(\Omega,\mu)}
=\left\Vert\left(\Vert M_g(h_\lambda)\Vert_{L_q(\Omega,\mu)}:\lambda\in\{1,2\}\right)\right\Vert_{\ell_q(\{1,2\})}
$$
$$
=\left\Vert\left(\Vert h_\lambda\Vert_{L_p(\Omega,\mu)}:\lambda\in\{1,2\}\right)\right\Vert_{\ell_q(\{1,2\})}
=\left\Vert\left(1:\lambda\in\{1,2\}\right)\right\Vert_{\ell_q(\{1,2\})}
=2^{1/q}
$$
Thus $2^{1/p}=2^{1/q}$. Contradiction, so $(\Omega,\Sigma,\mu)$ consist of single atom. In this case from lemma \ref{FuncDescOnAtom} it follows that for all $f\in L_p(\Omega,\mu)$ we have
$$
\Vert M_g(f)\Vert_{L_q(\Omega,\mu)}
=\Vert J_q(M_g(f))\Vert_{\ell_q(\{1\})}
=\Vert J_q(g\cdot f)\Vert_{\ell_q(\{1\})}
=\mu(\Omega)^{1/q-1}\left|\int_\Omega g(\omega) f(\omega)d\mu(\omega)\right|
$$
$$
\Vert f\Vert_{L_p(\Omega,\mu)}
=\Vert J_p(f)\Vert_{\ell_p(\{1\})}
=\mu(\Omega)^{1/p-1}\left|\int_\Omega f(\omega)d\mu(\omega)\right|
$$
By $c$ we denote the constant value of $g$, then
$$
\Vert M_g(f)\Vert_{L_q(\Omega,\mu)}
=\mu(\Omega)^{1/q-1}\left|\int_\Omega g(\omega) f(\omega)d\mu(\omega)\right|
=\mu(\Omega)^{1/q-1}|c|\left|\int_\Omega f(\omega)d\mu(\omega)\right|
$$
From this equality we conclude that in this case $M_g$ is an isometry if
$$
|g|=|c|=\mu(\Omega)^{1/p-1/q}
$$ 

$ii)\implies i)$. Let $p=q$, then $|g|=1$. So for all $f\in L_p(\Omega,\mu)$ we have
$$
\Vert M_g(f)\Vert_{L_p(\Omega,\mu)}
=\Vert g\cdot f\Vert_{L_p(\Omega,\mu)}
=\Vert |g|\cdot f\Vert_{L_p(\Omega,\mu)}
=\Vert f\Vert_{L_p(\Omega,\mu)}
$$
hence $M_g$ is an isometry. Let $p\neq q$, then $(\Omega,\Sigma,\mu)$ consist of single atom and we conclude
$$
\Vert M_g(f)\Vert_{L_q(\Omega,\mu)}
=\mu(\Omega)^{1/q-1}\left|\int_\Omega g(\omega) f(\omega)d\mu(\omega)\right|
=\mu(\Omega)^{1/q-1}|c|\left|\int_\Omega f(\omega)d\mu(\omega)\right|
$$
$$
=\mu(\Omega)^{1/p-1}\left|\int_\Omega f(\omega)d\mu(\omega)\right|
=\Vert f\Vert_{L_p(\Omega,\mu)}
$$
hence $M_g$ is isometric.
\end{proof}

\begin{proposition}\label{IsomMultOpCharacBtwnTwoContMeasSp} Let $(\Omega,\Sigma,\mu)$ be a $\sigma$-finite measure space, $p,q\in[1,+\infty]$ and $g,\rho\in L_0(\Omega,\mu)$, and $\rho$ is non negative, then the following are equivalent 

i) $M_g\in\mathcal{B}(L_p(\Omega,\mu), L_q(\Omega,\rho\cdot\mu))$ is isometric

ii) $M_g$ is isometric isomorphism

iii) $\rho$ is positive, $|g\cdot \rho^{1/q}|=\mu(\Omega)^{1/p-1/q}$ and if $p\neq q$ the sapce $(\Omega,\Sigma,\mu)$ consist of single atom.
\end{proposition}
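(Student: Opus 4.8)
The plan is to follow exactly the scheme of Proposition \ref{TopInjMultOpCharacBtwnTwoContMeasSp}, replacing the appeal to Theorem \ref{TopInjMultOpCharacOnMeasSp} by the isometric characterisation of Proposition \ref{IsomMultOpCharacOnMeasSp}. The whole argument rests on the change-of-density isometric isomorphism $\bar{I}_q$ supplied by Proposition \ref{ChngOfDenst} and on the factorisation $M_{g\cdot\rho^{1/q}}=\bar{I}_q^{-1}M_g$ as operators into $L_q(\Omega,\mu)$, exactly as in the topologically injective case.

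For $i)\implies iii)$ I would first establish positivity of $\rho$. Setting $E=\rho^{-1}(\{0\})$, if $\mu(E)>0$ then $\chi_E\neq 0$ in $L_p(\Omega,\mu)$ while $(\rho\cdot\mu)(E)=\int_E\rho\,d\mu=0$ forces $M_g(\chi_E)=g\cdot\chi_E=0$ in $L_q(\Omega,\rho\cdot\mu)$; since an isometry is injective this is a contradiction, so $\mu(E)=0$ and $\rho$ is positive. Hence $\bar{I}_q$ is an isometric isomorphism by Proposition \ref{ChngOfDenst}, and the composite $M_{g\cdot\rho^{1/q}}=\bar{I}_q^{-1}M_g\in\mathcal{B}(L_p(\Omega,\mu),L_q(\Omega,\mu))$ is isometric, being the composition of an isometry with an isometric isomorphism. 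Proposition \ref{IsomMultOpCharacOnMeasSp} then yields $|g\cdot\rho^{1/q}|=\mu(\Omega)^{1/p-1/q}$, together with the single-atom condition when $p\neq q$.

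For the reverse direction I would run the same factorisation forward. Given $iii)$, positivity of $\rho$ makes $\bar{I}_q$ available, and Proposition \ref{IsomMultOpCharacOnMeasSp} makes $M_{g\cdot\rho^{1/q}}$ an isometry into $L_q(\Omega,\mu)$; then $M_g=\bar{I}_qM_{g\cdot\rho^{1/q}}$ is isometric, which gives $i)$. To upgrade this to the isometric isomorphism of $ii)$ I would split on $p$: when $p=q$ the condition reads $|g\cdot\rho^{1/q}|=1$, so $g\cdot\rho^{1/q}$ is bounded above and below and $M_{g\cdot\rho^{1/q}}$ is invertible by Proposition \ref{MulpOpPropIfPeqqualsQ}, hence an isometric isomorphism; when $p\neq q$ the single-atom hypothesis makes both $L_p(\Omega,\mu)$ and $L_q(\Omega,\mu)$ one-dimensional by Proposition \ref{DescOfLpSpOnPureAtomMeasSp}, so the isometry $M_{g\cdot\rho^{1/q}}$ is automatically surjective. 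In either case $M_g=\bar{I}_qM_{g\cdot\rho^{1/q}}$ is a composition of isometric isomorphisms, and $ii)\implies i)$ is then immediate.

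I expect no serious obstacle here: once $\bar{I}_q$ is in place the reduction is mechanical, and the single genuinely delicate point is the passage from a mere isometry to an isometric isomorphism. This is handled by the invertibility criterion of Proposition \ref{MulpOpPropIfPeqqualsQ} in the $p=q$ case and by the finite-dimensionality forced by the single-atom condition when $p\neq q$, so the two cases must be treated separately rather than uniformly.
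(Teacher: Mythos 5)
Your proposal is correct and takes essentially the same route as the paper: both establish positivity of $\rho$, pass to the change-of-density isometric isomorphism $\bar{I}_q$ of Proposition \ref{ChngOfDenst}, factor $M_{g\cdot\rho^{1/q}}=\bar{I}_q^{-1}M_g$, and reduce to Proposition \ref{IsomMultOpCharacOnMeasSp}. The only (harmless) divergence is that for $i)\implies ii)$ the paper simply cites Proposition \ref{TopInjMultOpCharacBtwnTwoContMeasSp} --- an isometry is topologically injective, hence already an isomorphism --- so your case split on $p=q$ (via Proposition \ref{MulpOpPropIfPeqqualsQ}) versus $p\neq q$ (via one-dimensionality) re-proves invertibility that is already on record, just as your inline positivity argument duplicates the one inside that proposition's proof.
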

\begin{proof} $i)\implies iii)$ Since $M_g$ is isometric, it is topologically injective and by theorem \ref{TopInjMultOpCharacBtwnTwoMeasSp} we see that $\rho$ is positive. Hence by proposition \ref{ChngOfDenst} we have an isometric isomorphism $\bar{I}_q:L_q(\Omega,\mu)\to L_q(\Omega,\rho\cdot\mu),f\mapsto \rho^{-1/q}\cdot f$. Obviously $M_{g\cdot\rho^{1/q}}=\bar{I}_q^{-1} M_g\in\mathcal{B}(L_p(\Omega,\mu),L_q(\Omega,\mu))$. Since $\bar{I}_q$ is an isometric isomorphism and $M_g$ is isometric, then $M_{g\cdot \rho^{1/q}}$ is isometric too. It is remains to apply theorem \ref{IsomMultOpCharacOnMeasSp}.

$iii)\implies i)$ By theorem \ref{IsomMultOpCharacOnMeasSp} operator $M_{g\cdot\rho^{1/q}}$ is isometric. Since $\rho$ is positive by proposition \ref{ChngOfDenst} we have an isometric isomorohism $\bar{I}_q$. Then from equality $M_g=\bar{I}_q M_{g\cdot\rho^{1/q}}$ it follows that $M_g$ is also isometric.

$i)\implies ii)$ Since $M_g$ is isometric, it is topologically injective and by proposition \ref{TopInjMultOpCharacBtwnTwoContMeasSp} it is an isomorphism, which is isometric by assumption.

$ii)\implies i)$ Since $M_g$ is an isometric isomorphism, trivially, it is isometric.
\end{proof}

\begin{theorem}\label{IsomMultOpCharacBtwnTwoMeasSp} Let $(\Omega,\Sigma,\mu)$, $(\Omega,\Sigma,\nu)$ be two $\sigma$-finite measure spaces, $p,q\in[1,+\infty]$ and $g\in L_0(\Omega,\mu)$, then the following are equivalent 

i) $M_g$ is isometric 

ii) $M_g^{\Omega_c}$ is isometric 

iii) $\rho_{\nu,\mu}$ is positive, $|g\cdot \rho_{\nu,\mu}^{1/q}|_{\Omega_c}|=\mu(\Omega_c)^{1/p-1/q}$ and if  $p\neq q$ the space $(\Omega,\Sigma,\mu)$ consist of single atom.
\end{theorem}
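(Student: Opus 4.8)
The plan is to follow the template of Theorem~\ref{TopInjMultOpCharacBtwnTwoMeasSp}: split $\Omega$ into an absolutely continuous part and a singular part via the Lebesgue decomposition, discard the singular part, and transfer the problem on the continuous part to Proposition~\ref{IsomMultOpCharacBtwnTwoContMeasSp}. First I would invoke Theorem~\ref{LebMeasDecomp} to write $\nu=\rho_{\nu,\mu}\cdot\mu+\nu_s$ with $\mu(\Omega_s)=\nu_s(\Omega_c)=0$ and $\Omega_c=\Omega\setminus\Omega_s$. On $\Omega_c$ this gives $\nu|_{\Omega_c}=\rho_{\nu,\mu}\cdot\mu|_{\Omega_c}$, so that $M_g^{\Omega_c}$ is exactly a multiplication operator of the type treated in Proposition~\ref{IsomMultOpCharacBtwnTwoContMeasSp}; on $\Omega_s$ we have $\mu|_{\Omega_s}=0$, whence $L_p(\Omega_s,\mu|_{\Omega_s})=\{0\}$ and, by Proposition~\ref{MultOpCharacBtwnTwoSingMeasSp}, $M_g^{\Omega_s}=0$.

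For $i)\Rightarrow ii)$ I would simply apply part~3 of Proposition~\ref{MultOpDecompDecomp} to the finite partition $\Omega=\Omega_c\cup\Omega_s$: if $M_g$ is isometric, so is the restriction $M_g^{\Omega_c}$. The converse $ii)\Rightarrow i)$ needs a small direct argument, since part~3 of Proposition~\ref{MultOpDecompDecomp} is stated only in one direction. Given $f\in L_p(\Omega,\mu)$, I would use Proposition~\ref{LpSpDecomp} for the measure $\nu$ to write $\Vert M_g(f)\Vert_{L_q(\Omega,\nu)}$ as the $\ell_q$-combination of the norms of $(M_g f)|_{\Omega_c}$ and $(M_g f)|_{\Omega_s}$. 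The second term vanishes because $M_g^{\Omega_s}=0$ (equivalently because $f|_{\Omega_s}=0$ in the trivial space $L_p(\Omega_s,\mu|_{\Omega_s})$), while the first term equals $\Vert M_g^{\Omega_c}(f|_{\Omega_c})\Vert_{L_q(\Omega_c,\nu|_{\Omega_c})}=\Vert f|_{\Omega_c}\Vert_{L_p(\Omega_c,\mu|_{\Omega_c})}$ by the assumed isometry of $M_g^{\Omega_c}$. Since $\mu(\Omega_s)=0$ forces $\Vert f\Vert_{L_p(\Omega,\mu)}=\Vert f|_{\Omega_c}\Vert_{L_p(\Omega_c,\mu|_{\Omega_c})}$, this yields $\Vert M_g(f)\Vert_{L_q(\Omega,\nu)}=\Vert f\Vert_{L_p(\Omega,\mu)}$, and the argument is uniform over $p,q\in[1,+\infty]$.

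Finally, for $ii)\Leftrightarrow iii)$ I would apply Proposition~\ref{IsomMultOpCharacBtwnTwoContMeasSp} to the $\sigma$-finite space $(\Omega_c,\Sigma|_{\Omega_c},\mu|_{\Omega_c})$ with density $\rho_{\nu,\mu}|_{\Omega_c}$, and then rephrase its conclusion in terms of $(\Omega,\Sigma,\mu)$. Here the only points to verify are bookkeeping identities coming from $\mu(\Omega_s)=0$: positivity of $\rho_{\nu,\mu}$ and of its restriction agree $\mu$-almost everywhere; $\mu(\Omega_c)=\mu(\Omega)$, so the exponent $\mu(\Omega_c)^{1/p-1/q}$ is the required one; and the atoms of $(\Omega,\mu)$ coincide with those of $(\Omega_c,\mu|_{\Omega_c})$, so that ``$(\Omega,\Sigma,\mu)$ consists of a single atom'' is the correct translation of the single-atom condition on $\Omega_c$.

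I expect the main (though modest) obstacle to be precisely the converse step $ii)\Rightarrow i)$: because Proposition~\ref{MultOpDecompDecomp}.3 supplies only the forward implication for isometries, it must be supplemented by the explicit norm computation above, which in turn relies on the vanishing of the singular part $M_g^{\Omega_s}$ and on the uniform treatment of the two endpoint cases $p=+\infty$ and $q=+\infty$ through Proposition~\ref{LpSpDecomp}.
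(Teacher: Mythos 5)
Your proposal is correct and follows essentially the same route as the paper: the paper likewise obtains $i)\Rightarrow ii)$ from Proposition~\ref{MultOpDecompDecomp}, handles $ii)\Leftrightarrow iii)$ by Proposition~\ref{IsomMultOpCharacBtwnTwoContMeasSp}, and its step $iii)\Rightarrow i)$ contains exactly the direct norm computation you supply for $ii)\Rightarrow i)$, using $\mu(\Omega\setminus\Omega_c)=0$ (via $\chi_{\Omega_c}=\chi_\Omega$ in $L_p(\Omega,\mu)$) to reduce $\Vert M_g(f)\Vert_{L_q(\Omega,\nu)}$ to $\Vert M_g^{\Omega_c}(f|_{\Omega_c})\Vert_{L_q(\Omega_c,\nu|_{\Omega_c})}=\Vert f\Vert_{L_p(\Omega,\mu)}$. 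Your observation that Proposition~\ref{MultOpDecompDecomp}.3 is one-directional and must be supplemented is exactly what the paper's own argument does implicitly.
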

\begin{proof}
$i)\implies ii)\implies iii)$ Since $M_g$ is isometric, by proposition \ref{MultOpDecompDecomp} operator $M_g^{\Omega_c}$, is isometric. It is remains to apply proposition \ref{IsomMultOpCharacBtwnTwoContMeasSp}.

$iii)\implies i)$ By proposition \ref{IsomMultOpCharacBtwnTwoContMeasSp} operator $M_g^{\Omega_c}$ is isometric. Now take arbitrary $f\in L_p(\Omega,\mu)$. Since $\mu(\Omega\setminus\Omega_c)=0$, then $\chi_{\Omega_c}=\chi_{\Omega}$ in $L_p(\Omega,\mu)$. As the result $f=f\chi_{\Omega}=f\chi_{\Omega_c}=f\chi_{\Omega_c}\chi_{\Omega_c}$ in $L_p(\Omega,\mu)$ and $M_g(f)=M_g(f\chi_{\Omega_c})\chi_{\Omega_c}$. Thus using that $M_g^{\Omega_c}$ is isometric we get
$$
\Vert M_g(f)\Vert_{L_q(\Omega,\nu)}
=\Vert M_g(f\chi_{\Omega_c})\chi_{\Omega_c}\Vert_{L_q(\Omega,\nu)}
=\Vert M_g(f\chi_{\Omega_c})\Vert_{L_q(\Omega_c,\nu|_{\Omega_c})}
$$
$$
=\Vert M_g^{\Omega_c}(f|_{\Omega_c})\Vert_{L_q(\Omega_c,\nu|_{\Omega_c})}
=\Vert f|_{\Omega_c}\Vert_{L_p(\Omega_c,\mu|_{\Omega_c})}
$$
Since $\mu(\Omega\setminus\Omega_c)=0$ we have $\Vert f|_{\Omega_c}\Vert_{L_p(\Omega_c,\mu|_{\Omega_c})}=\Vert f\Vert_{L_p(\Omega,\mu)}$ so $\Vert M_g(f)\Vert_{L_q(\Omega,\nu)}=\Vert f\Vert_{L_p(\Omega,\mu)}$ i.e.  $M_g$ is isometric.
\end{proof}

\begin{theorem}\label{IsomMultOpDescBtwnTwoMeasSp} Let $(\Omega,\Sigma,\mu)$, $(\Omega,\Sigma,\nu)$ be two $\sigma$-finite measure spaces, $p,q\in[1,+\infty]$ and $g\in L_0(\Omega,\mu)$, then the following are equivalent 

i) $M_g\in\mathcal{B}(L_p(\Omega,\mu),L_q(\Omega,\nu))$ is isometric 

ii) $M_{\chi_{\Omega_c}/g}\in\mathcal{B}(L_q(\Omega,\nu), L_p(\Omega,\mu))$ its left inverse strictly coisometric operator.
\end{theorem}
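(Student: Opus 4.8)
My plan is to mirror the proof of Theorem~\ref{TopInjMultOpDescBtwnTwoMeasSp}, exploiting that an isometry is in particular topologically injective, and to sharpen each of its conclusions from the ``topological'' form to the ``isometric/strict'' form. Thus I expect to reuse, essentially verbatim, the construction showing that $M_{\chi_{\Omega_c}/g}$ is bounded and satisfies $M_{\chi_{\Omega_c}/g}M_g=\mathrm{id}$, and then to add the two extra estimates that promote ``topologically surjective left inverse'' to ``strictly coisometric left inverse''.

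For $i)\implies ii)$, I would first invoke Theorem~\ref{TopInjMultOpDescBtwnTwoMeasSp} to obtain that $M_{\chi_{\Omega_c}/g}$ is a bounded left inverse of $M_g$. To see it is contractive, I compute for $h\in L_q(\Omega,\nu)$ that $\Vert M_{\chi_{\Omega_c}/g}(h)\Vert_{L_p(\Omega,\mu)}=\Vert M_{1/g}^{\Omega_c}(h|_{\Omega_c})\Vert_{L_p(\Omega_c,\mu|_{\Omega_c})}$; by Theorem~\ref{IsomMultOpCharacBtwnTwoMeasSp} the operator $M_g^{\Omega_c}$ is isometric and by Proposition~\ref{IsomMultOpCharacBtwnTwoContMeasSp} it is an isometric isomorphism, so its inverse $M_{1/g}^{\Omega_c}$ is also an isometry, and the right-hand side equals $\Vert h|_{\Omega_c}\Vert_{L_q(\Omega_c,\nu|_{\Omega_c})}\le\Vert h\Vert_{L_q(\Omega,\nu)}$. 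For strict $1$-topological surjectivity I would, given $f\in L_p(\Omega,\mu)$, simply take $h:=M_g(f)$: then $M_{\chi_{\Omega_c}/g}(h)=f$ because $M_{\chi_{\Omega_c}/g}$ is a left inverse, and $\Vert h\Vert_{L_q(\Omega,\nu)}=\Vert M_g(f)\Vert_{L_q(\Omega,\nu)}=\Vert f\Vert_{L_p(\Omega,\mu)}$ because $M_g$ is isometric, which is exactly the strict estimate.

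For $ii)\implies i)$, I would extract the two opposite inequalities for $\Vert M_g(f)\Vert$. From $M_{\chi_{\Omega_c}/g}M_g=\mathrm{id}$ and contractivity of $M_{\chi_{\Omega_c}/g}$ I get $\Vert f\Vert_{L_p(\Omega,\mu)}=\Vert M_{\chi_{\Omega_c}/g}(M_g(f))\Vert_{L_p(\Omega,\mu)}\le\Vert M_g(f)\Vert_{L_q(\Omega,\nu)}$. For the reverse inequality I would use strict $1$-topological surjectivity to pick, for a given $f$, some $h\in L_q(\Omega,\nu)$ with $M_{\chi_{\Omega_c}/g}(h)=f$ and $\Vert h\Vert_{L_q(\Omega,\nu)}\le\Vert f\Vert_{L_p(\Omega,\mu)}$; applying $M_g$ and using the pointwise identity $g\cdot(\chi_{\Omega_c}/g)=\chi_{\Omega_c}$ yields $M_g(f)=M_g(M_{\chi_{\Omega_c}/g}(h))=\chi_{\Omega_c}\cdot h$, hence $\Vert M_g(f)\Vert_{L_q(\Omega,\nu)}=\Vert h|_{\Omega_c}\Vert_{L_q(\Omega_c,\nu|_{\Omega_c})}\le\Vert h\Vert_{L_q(\Omega,\nu)}\le\Vert f\Vert_{L_p(\Omega,\mu)}$. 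The two bounds combine to give $\Vert M_g(f)\Vert_{L_q(\Omega,\nu)}=\Vert f\Vert_{L_p(\Omega,\mu)}$, so $M_g$ is isometric.

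The hard part will be the reverse inequality in $ii)\implies i)$. The subtlety is that $M_{\chi_{\Omega_c}/g}$ is only a \emph{left} inverse: composing in the other order gives multiplication by $\chi_{\Omega_c}$, not the identity, since by Theorem~\ref{LebMeasDecomp} and Proposition~\ref{MultOpCharacBtwnTwoSingMeasSp} the singular part $\nu_s$ is carried on $\Omega_s$, where $\mu$ vanishes and $M_{\chi_{\Omega_c}/g}$ has a kernel. One therefore cannot merely invert the norm identity; instead the strict preimage $h$ must be pushed through $M_g$, landing on the truncation $\chi_{\Omega_c}\cdot h$, and the excess on $\Omega_s$ is discarded by the restriction bound $\Vert\chi_{\Omega_c}h\Vert_{L_q(\Omega,\nu)}\le\Vert h\Vert_{L_q(\Omega,\nu)}$. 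Keeping track of which of $M_g(f)$, $\chi_{\Omega_c}h$ and $h$ are supported on $\Omega_c$, and that the $L_q(\Omega,\nu)$-norm splits cleanly across $\Omega_c$ and $\Omega_s$ (including the $q=\infty$ case), is where the decomposition machinery of Proposition~\ref{MultOpDecompDecomp} really does the work.
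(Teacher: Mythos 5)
Your proposal is correct and follows essentially the same route as the paper's own proof: the same contractivity estimate via the isometry $M_{1/g}^{\Omega_c}=(M_g^{\Omega_c})^{-1}$ applied to $h|_{\Omega_c}$, the same choice $h=M_g(f)$ to witness strict $1$-topological surjectivity, and the same two-sided bound in $ii)\implies i)$ using $M_g(M_{\chi_{\Omega_c}/g}(h))=\chi_{\Omega_c}\cdot h$ together with contractivity of the left inverse. The only cosmetic difference is that you import boundedness and the left-inverse identity from Theorem~\ref{TopInjMultOpDescBtwnTwoMeasSp}, where the paper recomputes them directly via Proposition~\ref{MultOpDecompDecomp} and Proposition~\ref{IsomMultOpCharacBtwnTwoContMeasSp}.
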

\begin{proof}
$i)\implies ii)$ By proposition \ref{MultOpDecompDecomp} operator $M_g^{\Omega_c}$ is isometric and by proposition \ref{IsomMultOpCharacBtwnTwoContMeasSp} it is invertible with $(M_g^{\Omega_c})^{-1}=M_{1/g}^{\Omega_c}$. Since $M_g^{\Omega_c}$ is isometric then so does its inverse. Then for all $h\in L_q(\Omega,\nu)$ we have
$$
\Vert M_{\chi_{\Omega_c}/g}(h)\Vert_{L_p(\Omega,\mu)}=
\Vert M_{1/g}(h|_{\Omega_c})\Vert_{L_p(\Omega_c,\mu|_{\Omega_c})}=
\Vert M_{1/g}^{\Omega_c}(h|_{\Omega_c})\Vert_{L_p(\Omega_c,\mu|_{\Omega_c})}
$$
$$
=\Vert h|_{\Omega_c}\Vert_{L_q(\Omega_c,\nu|_{\Omega_c})}
\leq \Vert h \Vert_{L_q(\Omega,\nu)}
$$
So $M_{\chi_{\Omega_c}/g}$ is contractive. Now note that for all $f\in L_p(\Omega,\mu)$ we have 
$$
M_{\chi_{\Omega_c}/g}(M_g(f))
=M_{\chi_{\Omega_c}/g}(g\cdot f)
=(\chi_{\Omega_c}/g)\cdot g\cdot f
=f\cdot\chi_{\Omega_c}
$$
Since $\mu(\Omega\setminus\Omega_c)=0$, then $\chi_{\Omega_c}=\chi_{\Omega}$, so $M_{\chi_{\Omega_c}/g}(M_g(f))=f\cdot\chi_{\Omega_c}=f\cdot\chi_{\Omega}=f$. This means that $M_g$ have left inverse multiplication operator. Take any $f\in L_p(\Omega,\mu)$, then for $h=M_g(f)$ we have $M_{\chi_{\Omega_c}/g}(h)=f$ and $\Vert h\Vert_{L_q(\Omega,\nu)}\leq\Vert f\Vert_{L_p(\Omega,\mu)}$ i.e.  $M_{\chi_{\Omega_c}}/g$ is strictly $1$-topologigically surjective. Since $M_{\chi_{\Omega_c}/g}$ is also contractive, it is strictly coisometric.

$ii)\implies i)$ Take any $f\in L_p(\Omega,\mu)$, then there exist $h\in L_q(\Omega,\nu)$ such that $M_{\chi_{\Omega_c}/g}(h)=f$ and $\Vert h\Vert_{L_q(\Omega,\nu)}\leq \Vert f\Vert_{L_p(\Omega,\mu)}$. Hence
$$
\Vert M_g(f)\Vert_{L_q(\Omega,\nu)}
=\Vert M_g(M_{\chi_{\Omega_c}/g}(h))\Vert_{L_q(\Omega,\nu)}
=\Vert \chi_{\Omega_c}h\Vert_{L_q(\Omega,\nu)}
\leq\Vert h\Vert_{L_q(\Omega,\nu|)}
\leq\Vert f\Vert_{L_p(\Omega,\mu)}
$$
Since $M_{\chi_{\Omega_c}/g}$ is contractive and left inverse to $M_g$ then
$$
\Vert f\Vert_{L_p(\Omega,\mu)}
=\Vert M_{\chi_{\Omega_c}/g}(M_g(f))\Vert_{L_p(\Omega,\mu)}
\leq\Vert M_g(f)\Vert_{L_q(\Omega,\nu)}
$$
so $\Vert M_g(f)\Vert_{L_q(\Omega,\nu)}=\Vert f\Vert_{L_p(\Omega,\mu)}$. Since $f$ is arbitrary $M_g$ is isometric.
\end{proof}

\subsection{Topologically surjective operators}

Description of topologically surjective operators is slightly easier to obtain. We will show that all such operators are isomorphisms or invertible from the right. Most of the proofs goes along the lines of previous sections.

\begin{theorem}\label{TopSurMultOpCharacOnMeasSp} Let $(\Omega,\Sigma,\mu)$ be a $\sigma$-finite measure space, $p,q\in[1,+\infty]$ and $g\in L_0(\Omega,\mu)$, then the following are equivalent

i) $M_g\in\mathcal{B}(L_p(\Omega,\mu),L_q(\Omega,\mu))$ is topologically surjective 

ii) $M_g$ is an isomorphism

iii) $|g|\geq c$ for some $c>0$, if $p\neq q$ the space $(\Omega,\Sigma,\mu)$ consist of finitely many atoms.
\end{theorem}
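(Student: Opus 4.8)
The plan is to establish all equivalences through a short cycle, exploiting that condition (iii) here is \emph{verbatim} condition (iii) of Theorem \ref{TopInjMultOpCharacOnMeasSp} and that condition (ii) (``$M_g$ is an isomorphism'') is literally identical in both theorems. Consequently the equivalence (ii)$\Longleftrightarrow$(iii) is already proved and can simply be quoted from Theorem \ref{TopInjMultOpCharacOnMeasSp}, so none of the atomic/nonatomic analysis need be repeated. The implication (ii)$\Longrightarrow$(i) is immediate: if $M_g$ is an isomorphism then for every $h\in L_q(\Omega,\mu)$ the element $f=M_g^{-1}(h)$ satisfies $M_g(f)=h$ and $\Vert f\Vert_{L_p(\Omega,\mu)}\leq\Vert M_g^{-1}\Vert\,\Vert h\Vert_{L_q(\Omega,\mu)}$, which is exactly $c$-topological surjectivity for $c=\Vert M_g^{-1}\Vert$ in the sense of Definition \ref{DefNorOpType}. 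Thus the entire content of the theorem reduces to the single implication (i)$\Longrightarrow$(ii).

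For (i)$\Longrightarrow$(ii) I would argue directly, producing an explicit bounded inverse rather than reverifying (iii). Assume $M_g$ is $c$-topologically surjective. In particular $M_g$ is surjective, so Proposition \ref{MultpOpSurjInjDesc}(2) gives $\mu(Z_g)=0$, i.e. $g\neq 0$ almost everywhere; hence $1/g$ is a well-defined measurable function and the equation $g\cdot f=h$ has the \emph{unique} solution $f=h/g$ for every $h$. Fix any $c'>c$. Topological surjectivity supplies, for each $h\in L_q(\Omega,\mu)$, some $f\in L_p(\Omega,\mu)$ with $M_g(f)=h$ and $\Vert f\Vert_{L_p(\Omega,\mu)}\leq c'\Vert h\Vert_{L_q(\Omega,\mu)}$; by uniqueness this $f$ must equal $h/g$. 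Therefore $h/g\in L_p(\Omega,\mu)$ for all $h\in L_q(\Omega,\mu)$ with $\Vert h/g\Vert_{L_p(\Omega,\mu)}\leq c'\Vert h\Vert_{L_q(\Omega,\mu)}$, so the multiplication operator $M_{1/g}\in\mathcal{B}(L_q(\Omega,\mu),L_p(\Omega,\mu))$ is bounded with $\Vert M_{1/g}\Vert\leq c'$. Since $g\neq 0$ a.e. one checks $M_{1/g}M_g=\operatorname{id}_{L_p(\Omega,\mu)}$ and $M_gM_{1/g}=\operatorname{id}_{L_q(\Omega,\mu)}$, so $M_{1/g}=M_g^{-1}$ and $M_g$ is an isomorphism.

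The main obstacle is the well-definedness and boundedness of the candidate inverse $M_{1/g}$: a priori $1/g$ need not be essentially bounded, so $M_{1/g}$ might fail to map $L_q(\Omega,\mu)$ into $L_p(\Omega,\mu)$ at all. The key point is that topological surjectivity not only guarantees solvability of $g\cdot f=h$ but forces the solution to lie in $L_p(\Omega,\mu)$ with a \emph{uniform} norm bound, while the a.e.\ nonvanishing of $g$ coming from Proposition \ref{MultpOpSurjInjDesc} makes that solution unique, which is precisely what lets the individual estimates be assembled into a single operator-norm bound. This route deliberately bypasses the purely atomic and nonatomic case analysis used for the topologically injective theorem; one could instead mirror Propositions \ref{TopInjMultOpCharacOnPureAtomMeasSp} and \ref{TopInjMultOpCharacOnNonAtomMeasSp}, testing surjectivity against characteristic functions $\chi_{E_n}$ with $\mu(E_n)=2^{-n}$ to exclude $p\neq q$ on the nonatomic part and to force $|g|\geq c$, but the explicit-inverse argument is shorter and uniform in $p,q\in[1,+\infty]$.
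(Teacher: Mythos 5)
Your proposal is correct, and its overall skeleton coincides with the paper's: the paper likewise treats (ii)$\implies$(i) as immediate and disposes of the equivalence with (iii) by citing Theorem \ref{TopInjMultOpCharacOnMeasSp}, so the whole content sits in (i)$\implies$(ii). Where you genuinely diverge is in how that implication is proved. The paper's argument is soft: surjectivity plus Proposition \ref{MultpOpSurjInjDesc} gives $\mu(Z_g)=0$, hence injectivity, hence bijectivity, and then completeness of the $L_p$ spaces together with the open mapping (bounded inverse) theorem yields the isomorphism. You instead exploit the quantitative norm control built into Definition \ref{DefNorOpType}: since the kernel is trivial, the solution of $g\cdot f=h$ supplied by $c$-topological surjectivity must equal $h/g$, so $\Vert h/g\Vert_{L_p(\Omega,\mu)}\leq c'\Vert h\Vert_{L_q(\Omega,\mu)}$ holds for every $h$, which assembles into a bounded two-sided inverse $M_{1/g}$ with $\Vert M_{1/g}\Vert\leq c'$ for every $c'>c$. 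Both routes rest on the same measure-theoretic input (Proposition \ref{MultpOpSurjInjDesc}); yours is slightly longer but avoids Baire-category machinery, gives an explicit bound on $\Vert M_g^{-1}\Vert$, and identifies the inverse as a multiplication operator already at this stage, a fact the paper only extracts later (Theorem \ref{TopSurMultOpDescBtwnTwoMeasSp}) when it needs the right inverse $M_{\chi_{\Omega_c}/g}$; the paper's appeal to the open mapping theorem is shorter and needs no uniqueness bookkeeping. One small caveat: your phrase ``$M_{1/g}$ might fail to map $L_q$ into $L_p$'' is resolved exactly as you say, but it is worth noting explicitly that the argument uses that \emph{every} $h\in L_q(\Omega,\mu)$ admits a preimage with the uniform bound, i.e. the full strength of the paper's definition of topological surjectivity rather than mere surjectivity — with mere surjectivity you would be back to invoking the open mapping theorem, as the paper does.
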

\begin{proof} $i)\implies iii)$ Since $M_g$ be topologically surjective, then it is surjective and by proposition \ref{MultpOpSurjInjDesc} it is also injective. Thus $M_g$ is bijective. Since $L_p$ spaces are complete, from open mapping theorem we see that $M_g$ is an isomorphism. 

$ii)\implies i)$ If $M_g$ is an isomorphism, obviously, it is topologically surjective.

$i)\Longleftrightarrow iii)$ Follows from theorem \ref{TopInjMultOpCharacOnMeasSp}
\end{proof}
 
\begin{proposition}\label{TopSurMultOpCharacBtwnTwoContMeasSp} Let $(\Omega,\Sigma,\nu)$ be a $\sigma$-finite measure space, $p,q\in[1,+\infty]$ and $g,\rho\in L_0(\Omega,\rho\cdot\nu)$ and $\rho$ is non negative, then the following are equivalent

i) $M_g\in\mathcal{B}(L_p(\Omega,\rho\cdot\nu),L_q(\Omega,\nu))$ is topologically surjective 

2) $M_g$ is an isomorphism 

3) $\rho$ is positive, $|g\cdot \rho^{-1/p}|\geq c$ for some $c>0$, if $p\neq q$ the space $(\Omega,\Sigma,\mu)$ consist of finitely many atoms.
\end{proposition}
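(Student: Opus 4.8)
The plan is to reduce the two–measure–space problem to the single–measure–space characterisation of Theorem \ref{TopSurMultOpCharacOnMeasSp} by a change of density, paralleling exactly the treatment of the topologically injective case in Proposition \ref{TopInjMultOpCharacBtwnTwoContMeasSp}. The one structural difference is that here the density $\rho$ sits on the \emph{source} space $L_p(\Omega,\rho\cdot\nu)$ rather than on the target, so the auxiliary isometric isomorphism supplied by Proposition \ref{ChngOfDenst} must be composed on the right instead of on the left, and the correcting factor is $\rho^{-1/p}$ rather than $\rho^{1/q}$, which is precisely why condition $iii)$ features $|g\cdot\rho^{-1/p}|$.

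First I would establish positivity of $\rho$ in the direction $i)\implies iii)$. Set $E=\rho^{-1}(\{0\})$ and suppose for contradiction that $\nu(E)>0$. Since $(\rho\cdot\nu)(E)=\int_E\rho\,d\nu=0$, the source restricted to $E$ is trivial, $L_p(E,(\rho\cdot\nu)|_E)=\{0\}$, whereas $L_q(E,\nu|_E)\neq\{0\}$. Applying Proposition \ref{MultOpDecompDecomp} to the two–element partition $\Omega=E\cup(\Omega\setminus E)$, topological surjectivity of $M_g$ forces $M_g^{E}\colon\{0\}\to L_q(E,\nu|_E)$ to be topologically surjective, which is impossible onto a nonzero space. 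Hence $\nu(E)=0$ and $\rho$ is positive.

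With $\rho$ positive, Proposition \ref{ChngOfDenst} (applied with base measure $\nu$) furnishes the isometric isomorphism $\bar{I}_p\colon L_p(\Omega,\nu)\to L_p(\Omega,\rho\cdot\nu)$, $f\mapsto\rho^{-1/p}\cdot f$. The key algebraic identity is $M_g\circ\bar{I}_p=M_{g\cdot\rho^{-1/p}}$ as operators $L_p(\Omega,\nu)\to L_q(\Omega,\nu)$, equivalently $M_g=M_{g\cdot\rho^{-1/p}}\circ\bar{I}_p^{-1}$. Since precomposition with an isometric isomorphism preserves topological surjectivity (given $M_{g\cdot\rho^{-1/p}}x=y$ with $\|x\|\le c\|y\|$, the preimage $\bar{I}_p^{-1}x$ has the same norm and the same image) and likewise preserves isomorphy, the operator $M_g$ is topologically surjective, respectively an isomorphism, if and only if $M_{g\cdot\rho^{-1/p}}$ is. It then remains only to invoke Theorem \ref{TopSurMultOpCharacOnMeasSp}, which characterises topological surjectivity of $M_{g\cdot\rho^{-1/p}}$ on the single measure space $(\Omega,\Sigma,\nu)$ by $|g\cdot\rho^{-1/p}|\geq c$ together with finiteness of the atom set when $p\neq q$, and which simultaneously yields equivalence with being an isomorphism. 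Assembling these gives $i)\Leftrightarrow iii)$ and $i)\Leftrightarrow ii)$, the implication $ii)\implies i)$ being trivial.

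I expect the only genuine obstacle to be the positivity step, as it is the single place where the source–side density behaves differently from the target–side density of the injective analogue; once $\rho>0$ is secured, the change–of–density reduction is purely formal and all the real content is carried by Theorem \ref{TopSurMultOpCharacOnMeasSp}.
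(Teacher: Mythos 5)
Your proposal is correct and follows essentially the same route as the paper: positivity of $\rho$ from the failure of surjectivity onto functions supported on $E=\rho^{-1}(\{0\})$, then the reduction $M_g=M_{g\cdot\rho^{-1/p}}\circ\bar{I}_p^{-1}$ via Proposition \ref{ChngOfDenst}, and finally Theorem \ref{TopSurMultOpCharacOnMeasSp}. The only cosmetic differences are that you route the positivity step through Proposition \ref{MultOpDecompDecomp} where the paper argues directly that $\operatorname{Im}(M_g)\subset\{h:h|_E=0\}$, and that you state the correcting factor $\rho^{-1/p}$ consistently, whereas the paper's $i)\implies ii)$ paragraph carries over $\rho^{1/q}$ and $\bar{I}_q$ from the injective case by an evident typo.
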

\begin{proof} $i)\implies iii)$ Consider set $E=\rho^{-1}(\{0\})$. Assume $\nu(E)>0$ then $\chi_E\neq 0$ in $L_p(\Omega,\nu)$. On the other hand $(\rho\cdot\nu)(E)=\int_E\rho(\omega)d\nu(\omega)=0$, so $\chi_E=0$ in $L_q(\Omega,\rho\cdot\mu)$. Then for all $f\in L_p(\Omega,\rho\cdot\nu)$ holds $M_g(f)\chi_E=M_g(f\cdot\chi_E)=M_g(0)=0$ in $L_q(\Omega,\nu)$. The last equality means that $\operatorname{Im}(M_g)\subset\{h\in L_q(\Omega,\mu): h|_E=0\}$. Since $\nu(E)\neq 0$ we see that $M_g$ is not surjective and as the consequence it is not topologically surjective. Contradiction, so $\nu(E)=0$ and $\rho$ is positive. Hence by proposition \ref{ChngOfDenst} we have an isometric isomorphism $\bar{I}_p:L_p(\Omega,\nu)\to L_p(\Omega,\rho\cdot\nu),f\mapsto \rho^{-1/p}\cdot f$. Obviously $M_{g\cdot\rho^{-1/p}}=M_g \bar{I}_p\in\mathcal{B}(L_p(\Omega,\nu),L_q(\Omega,\nu))$. Since $\bar{I}_p$ is an isometric isomorphism and $M_g$ is topologically surjective, then $M_{g\cdot \rho^{-1/p}}$ is topologically surjective. It is remains to apply theorem \ref{TopSurMultOpCharacOnMeasSp}.

$iii)\implies i)$ By theorem \ref{TopSurMultOpCharacOnMeasSp} operator $M_{g\cdot\rho^{-1/p}}$ is topologically surjective. Since $\rho$ is positive by proposition \ref{ChngOfDenst} we have an isometric isomorohism $\bar{I}_p$. Then from equality $M_g= M_{g\cdot\rho^{-1/p}}\bar{I}_p^{-1}$ it follows that $M_g$ is also topologically surjective.

$i)\implies ii)$ As we proved above this operator $M_{g\cdot\rho^{1/q}}$ is topologically injective and $\bar{I}_q$ is an isometric isomorphism. By theorem \ref{TopSurMultOpCharacOnMeasSp} $M_{g\cdot\rho^{1/q}}$ is an isomorphism. Since $M_g=\bar{I}_q M_{g\cdot\rho^{1/q}}$ we see that $M_g$ is also an isomorphism, as composition of such.

$ii)\implies i)$. If $M_g$ is an isomorphism, obviously, it is topologically surjective.
\end{proof}

\begin{theorem}\label{TopSurMultOpCharacBtwnTwoMeasSp} Let $(\Omega,\Sigma,\mu)$, $(\Omega,\Sigma,\nu)$ be two $\sigma$-finite measure spaces, $p,q\in[1,+\infty]$ and $g\in L_0(\Omega,\mu)$, then the following are equivalent

i) $M_g\in\mathcal{B}(L_p(\Omega,\mu), L_q(\Omega,\nu))$ is topologically surjective 

ii) $M_g^{\Omega_c}$ is topologically surjective 

iii) $\rho_{\mu,\nu}$ is positive, $|g\cdot\rho_{\mu,\nu}^{-1/p}|_{\Omega_c}|\geq c$ for some $c>0$, if $p\neq q$ the space $(\Omega,\Sigma,\mu)$ consist of finitely many atoms.
\end{theorem}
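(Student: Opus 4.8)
The plan is to run the argument exactly in parallel with Theorem \ref{TopInjMultOpCharacBtwnTwoMeasSp}, but with the roles of source and target measures interchanged, since surjectivity is dual to injectivity. First I would apply the Lebesgue decomposition theorem \ref{LebMeasDecomp} to the pair $(\mu,\nu)$ with the measures swapped (decomposing $\mu$ against $\nu$, not $\nu$ against $\mu$), producing the density $\rho_{\mu,\nu}$, a $\sigma$-finite singular measure $\mu_s$, and a set $\Omega_s$ with $\Omega_c=\Omega\setminus\Omega_s$ satisfying $\mu=\rho_{\mu,\nu}\cdot\nu+\mu_s$ together with $\nu(\Omega_s)=\mu_s(\Omega_c)=0$. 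The key asymmetry to keep straight is that here the singular set $\Omega_s$ is where the \emph{target} measure $\nu$ vanishes, whereas in the injective case it was where the source measure $\mu$ vanished.

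Next I would decompose $\Omega=\Omega_c\cup\Omega_s$ as a disjoint union of two measurable sets and invoke part 2) of proposition \ref{MultOpDecompDecomp}: the operator $M_g$ is topologically surjective if and only if both restrictions $M_g^{\Omega_c}$ and $M_g^{\Omega_s}$ are topologically surjective. For the singular piece, since $\nu(\Omega_s)=0$ we have $L_q(\Omega_s,\nu|_{\Omega_s})=\{0\}$, so $M_g^{\Omega_s}$ maps into the trivial space and is automatically (strictly $1$-)topologically surjective. Consequently the topological surjectivity of $M_g$ reduces to that of $M_g^{\Omega_c}$ alone, which yields the equivalence $i)\Longleftrightarrow ii)$.

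It then remains to analyse the continuous piece. On $\Omega_c$ the relation $\mu_s(\Omega_c)=0$ gives $\mu|_{\Omega_c}=\rho_{\mu,\nu}\cdot\nu|_{\Omega_c}$, so $M_g^{\Omega_c}$ becomes an operator $L_p(\Omega_c,\rho_{\mu,\nu}\cdot\nu|_{\Omega_c})\to L_q(\Omega_c,\nu|_{\Omega_c})$, which is precisely the situation of proposition \ref{TopSurMultOpCharacBtwnTwoContMeasSp} with base measure $\nu|_{\Omega_c}$ and weight $\rho=\rho_{\mu,\nu}|_{\Omega_c}$. Applying that proposition identifies topological surjectivity of $M_g^{\Omega_c}$ with the conditions that $\rho_{\mu,\nu}$ be positive on $\Omega_c$, that $|g\cdot\rho_{\mu,\nu}^{-1/p}|_{\Omega_c}|\geq c$ for some $c>0$, and that $(\Omega,\Sigma,\mu)$ have finitely many atoms when $p\neq q$, thereby giving $ii)\Longleftrightarrow iii)$. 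I do not expect a genuine obstacle in this proof; the only point requiring care is the bookkeeping of the Lebesgue decomposition orientation, namely verifying that it is the target measure $\nu$ which is singular on $\Omega_s$ so that the singular restriction $M_g^{\Omega_s}$ is surjective for free rather than injective for free.
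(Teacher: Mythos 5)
Your proposal is correct and follows essentially the same route as the paper: decompose $\Omega=\Omega_c\cup\Omega_s$ via the Lebesgue decomposition of $\mu$ against $\nu$ (so that $\nu(\Omega_s)=0$ and $\mu|_{\Omega_c}=\rho_{\mu,\nu}\cdot\nu|_{\Omega_c}$), reduce to $M_g^{\Omega_c}$ by Proposition \ref{MultOpDecompDecomp} since $M_g^{\Omega_s}$ maps into the trivial space $L_q(\Omega_s,\nu|_{\Omega_s})=\{0\}$, and then apply Proposition \ref{TopSurMultOpCharacBtwnTwoContMeasSp}. Your observation that the target (not the source) space is the trivial one is exactly the right bookkeeping, and in fact states more cleanly what the paper's proof asserts with a small typo ($L_p$ where $L_q$ is meant).
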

\begin{proof}
By proposition \ref{MultOpDecompDecomp} operator $M_g$ is topologically surjective if and only if operators $M_g^{\Omega_c}:L_p(\Omega_c,\rho_{\mu,\nu}\cdot\nu|_{\Omega_c})\to L_q(\Omega_c,\nu|_{\Omega_c})$ and $M_g^{\Omega_s}:L_p(\Omega_s,\mu_s|_{\Omega_s})\to L_q(\Omega_s,\nu|_{\Omega_s})$ are topologically surjective. By proposition \ref{MultOpCharacBtwnTwoSingMeasSp} operator $M_g^{\Omega_s}$ is zero. Since $\nu(\Omega_s)=0$, then $L_p(\Omega_s,\nu|_{\Omega_s})=\{0\}$. From these two facts we conclude that $M_g^{\Omega_s}$ is topologically surjective. Thus topological surjectivity of $M_g$ is equivalent to topological injectivity of  $M_g^{\Omega_c}$. It is remains to apply proposition \ref{TopSurMultOpCharacBtwnTwoContMeasSp}.
\end{proof}

\begin{theorem}\label{TopSurMultOpDescBtwnTwoMeasSp} Let $(\Omega,\Sigma,\mu)$, $(\Omega,\Sigma,\nu)$ be two $\sigma$-finite measure spaces, $p,q\in[1,+\infty]$ and $g\in L_0(\Omega,\mu)$, then the following are equivalent 

i) $M_g\in\mathcal{B}(L_p(\Omega,\mu),L_q(\Omega,\nu))$ is topologically surjective 

ii) $M_{\chi_{\Omega_c}/g}\in\mathcal{B}(L_q(\Omega,\nu), L_p(\Omega,\mu))$ its right inverse topologically injective operator.
\end{theorem}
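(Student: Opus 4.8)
The plan is to follow the template of Theorem \ref{TopInjMultOpDescBtwnTwoMeasSp} verbatim, interchanging the roles of injectivity and surjectivity and of left and right inverses. The one structural choice to get right is the decomposition: here I work with the Lebesgue decomposition of $\mu$ relative to $\nu$ supplied by Theorem \ref{LebMeasDecomp}, so that $\Omega=\Omega_c\cup\Omega_s$ with $\nu(\Omega_s)=0$ and $\mu|_{\Omega_c}=\rho_{\mu,\nu}\cdot\nu|_{\Omega_c}$. The decisive consequence of $\nu(\Omega_s)=0$ is that $\chi_{\Omega_c}=\chi_\Omega$ in $L_q(\Omega,\nu)$, which is exactly what will make $M_{\chi_{\Omega_c}/g}$ a genuine right inverse.

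For $i)\implies ii)$ I would first apply Proposition \ref{MultOpDecompDecomp} to reduce topological surjectivity of $M_g$ to that of $M_g^{\Omega_c}$, and then Proposition \ref{TopSurMultOpCharacBtwnTwoContMeasSp} to conclude that $M_g^{\Omega_c}\colon L_p(\Omega_c,\mu|_{\Omega_c})\to L_q(\Omega_c,\nu|_{\Omega_c})$ is an isomorphism with inverse $M_{1/g}^{\Omega_c}$; in particular $\rho_{\mu,\nu}$ is positive and $g$ is nonzero $\mu$-a.e. on $\Omega_c$. Since $\chi_{\Omega_c}/g$ vanishes on $\Omega_s$, for $h\in L_q(\Omega,\nu)$ I would estimate
$$
\Vert M_{\chi_{\Omega_c}/g}(h)\Vert_{L_p(\Omega,\mu)}=\Vert M_{1/g}^{\Omega_c}(h|_{\Omega_c})\Vert_{L_p(\Omega_c,\mu|_{\Omega_c})}\leq\Vert M_{1/g}^{\Omega_c}\Vert\,\Vert h\Vert_{L_q(\Omega,\nu)},
$$
where the final step uses $\Vert h|_{\Omega_c}\Vert_{L_q(\Omega_c,\nu|_{\Omega_c})}=\Vert h\Vert_{L_q(\Omega,\nu)}$ because $\nu(\Omega_s)=0$; this gives boundedness of $M_{\chi_{\Omega_c}/g}$. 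Then, using that $g$ is a.e. nonzero on $\Omega_c$, I would compute $M_g(M_{\chi_{\Omega_c}/g}(h))=g\cdot(\chi_{\Omega_c}/g)\cdot h=\chi_{\Omega_c}\cdot h=h$ in $L_q(\Omega,\nu)$, exhibiting $M_{\chi_{\Omega_c}/g}$ as a right inverse of $M_g$.

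Topological injectivity of $M_{\chi_{\Omega_c}/g}$ then comes for free from the right-inverse identity together with boundedness of $M_g$: one has $\Vert h\Vert_{L_q(\Omega,\nu)}=\Vert M_g(M_{\chi_{\Omega_c}/g}(h))\Vert_{L_q(\Omega,\nu)}\leq\Vert M_g\Vert\,\Vert M_{\chi_{\Omega_c}/g}(h)\Vert_{L_p(\Omega,\mu)}$, so $M_{\chi_{\Omega_c}/g}$ is $\Vert M_g\Vert^{-1}$-topologically injective. For the converse $ii)\implies i)$ I would not even need injectivity: given that $M_{\chi_{\Omega_c}/g}$ is a bounded right inverse, for each $h\in L_q(\Omega,\nu)$ set $f=M_{\chi_{\Omega_c}/g}(h)$, so that $M_g(f)=h$ and $\Vert f\Vert_{L_p(\Omega,\mu)}\leq\Vert M_{\chi_{\Omega_c}/g}\Vert\,\Vert h\Vert_{L_q(\Omega,\nu)}$, which is precisely $\Vert M_{\chi_{\Omega_c}/g}\Vert$-topological surjectivity of $M_g$.

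I do not expect a genuine obstacle: once the decomposition of $\mu$ with respect to $\nu$ is fixed, the argument is almost entirely formal, since a bounded right inverse automatically yields topological surjectivity and the right-inverse relation automatically yields topological injectivity of the inverse. The only step drawing on real content from the earlier development is the boundedness of $M_{\chi_{\Omega_c}/g}$, which rests squarely on Proposition \ref{TopSurMultOpCharacBtwnTwoContMeasSp} identifying $(M_g^{\Omega_c})^{-1}$ with the multiplication operator $M_{1/g}^{\Omega_c}$; the remaining verifications ($\chi_{\Omega_c}=\chi_\Omega$ $\nu$-a.e. and $g$ nonzero $\mu$-a.e. on $\Omega_c$) are immediate from the structure theorems already established.
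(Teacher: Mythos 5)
Your proposal is correct and takes essentially the same route as the paper's own proof: reduce to $M_g^{\Omega_c}$ via Proposition \ref{MultOpDecompDecomp}, identify $(M_g^{\Omega_c})^{-1}=M_{1/g}^{\Omega_c}$ via Proposition \ref{TopSurMultOpCharacBtwnTwoContMeasSp} (using the decomposition of $\mu$ relative to $\nu$ with $\nu(\Omega_s)=0$), verify the right-inverse identity through $\chi_{\Omega_c}=\chi_{\Omega}$ in $L_q(\Omega,\nu)$, and then obtain topological injectivity of $M_{\chi_{\Omega_c}/g}$ and topological surjectivity of $M_g$ formally from that identity. Your constant $\Vert M_g\Vert^{-1}$ in the injectivity estimate is in fact the correct one (the paper's displayed inequality misplaces the inverse), so nothing is missing.
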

\begin{proof}
$i)\implies ii)$ By proposition \ref{MultOpDecompDecomp} operator $M_g^{\Omega_c}$ is topologically surjective. By proposition \ref{TopSurMultOpCharacBtwnTwoContMeasSp} it is invertable and $(M_g^{\Omega_c})^{-1}=M_{1/g}^{\Omega_c}$. Then for all $h\in L_q(\Omega,\nu)$ we have
$$
\Vert M_{\chi_{\Omega_c}/g}(h)\Vert_{L_p(\Omega,\mu)}=
\Vert M_{1/g}(h|_{\Omega_c})\Vert_{L_p(\Omega_c,\mu|_{\Omega_c})}=
\Vert M_{1/g}^{\Omega_c}(h|_{\Omega_c})\Vert_{L_p(\Omega_c,\mu|_{\Omega_c})}
$$
$$
\leq\Vert M_{1/g}^{\Omega_c}\Vert\Vert h|_{\Omega_c}\Vert_{L_q(\Omega_c,\nu|_{\Omega_c})}
\leq\Vert M_{1/g}^{\Omega_c}\Vert\Vert h\Vert_{L_q(\Omega,\nu)}
$$ 
So $M_{\chi_{\Omega_c}/g}$ is bounded. Now note that for all $h\in L_q(\Omega,\nu)$ we have 
$$
M_g(M_{\chi_{\Omega_c}/g}(h))
=M_g(\chi_{\Omega_c}/g\cdot h)
=g\cdot(\chi_{\Omega_c}/g)\cdot  h
=h\cdot\chi_{\Omega_c}
$$
Since $\nu(\Omega\setminus\Omega_c)=0$, then $\chi_{\Omega_c}=\chi_{\Omega}$, so $M_g(M_{\chi_{\Omega_c}/g}(h))=h\cdot\chi_{\Omega_c}=h\cdot\chi_{\Omega}=h$. This means that $M_g$ have right inverse multiplication operator. Take any $h\in L_q(\Omega,\nu)$, then
$$
\Vert M_{\chi_{\Omega_c}/g}(h)\Vert_{L_p(\Omega,\mu)}
\geq\Vert M_g\Vert\Vert M_g(M_{\chi_{\Omega_c}/g}(h))\Vert_{L_q(\Omega,\nu)}
\geq\Vert M_g\Vert\Vert h\Vert_{L_q(\Omega,\nu)}
$$
Since $h$ is arbitrary $M_{\chi_{\Omega_c}/g}$ is topologically injective.

$ii)\implies i)$ Take arbitrary $h\in L_q(\Omega,\nu)$ and consider $f=M_{\chi_{\Omega_c}/g}(h)$. Then $M_g(f)=M_g(M_{\chi_{\Omega_c}/g}(h))=h$ and $\Vert f\Vert_{L_p(\Omega,\mu)}\leq\Vert M_{\chi_{\Omega_c}/g}\Vert\Vert h\Vert_{L_q(\Omega,\nu)}$. Since $h$ is arbitrary $M_g$ is topologically surjective.
\end{proof}

\subsection{Coisometric operators}

Coisometric operators are particular case of topologically surjective ones, so we can use certain results of previous section. We will use several standards facts about coisometric operators from \cite{HelFA}.

\begin{theorem}\label{CoisomMultOpCharacOnMeasSp} Let $(\Omega,\Sigma,\mu)$ be a $\sigma$-finite measure space, $p,q\in[1,+\infty]$ and $g\in L_0(\Omega,\mu)$, then the following are equivalent

i) $M_g\in\mathcal{B}(L_p(\Omega,\mu),L_q(\Omega,\mu))$ is coisometric 

ii) $M_g$ is an isometric isomorphism 

iii) $|g|=\mu(\Omega)^{1/q-1/p}$, if $p\neq q$ the space $(\Omega,\Sigma,\mu)$ consist of single atom.
\end{theorem}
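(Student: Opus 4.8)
The plan is to bootstrap the theorem from the characterisation of isometric multiplication operators (Proposition \ref{IsomMultOpCharacOnMeasSp}) and of topologically surjective ones (Theorem \ref{TopSurMultOpCharacOnMeasSp}). The organising observation is that, for the operators $M_g$ at hand, topological surjectivity already forces bijectivity, and a bijective coisometry is automatically isometric, so that coisometric and isometric isomorphisms coincide here.

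First I would prove $i)\implies ii)$. A coisometric operator is in particular $1$-topologically surjective, hence topologically surjective, so Theorem \ref{TopSurMultOpCharacOnMeasSp} shows $M_g$ is an isomorphism. For a bijective operator the quotient-map (metric surjection) condition reads, for every $h\in L_q(\Omega,\mu)$,
\[
\Vert h\Vert_{L_q(\Omega,\mu)}=\inf\{\Vert f\Vert_{L_p(\Omega,\mu)}:M_g(f)=h\}=\Vert M_g^{-1}(h)\Vert_{L_p(\Omega,\mu)},
\]
where the first equality combines contractivity with $1$-topological surjectivity and the second uses that the preimage is unique; hence $M_g^{-1}=M_{1/g}$ is isometric and $M_g$ is an isometric isomorphism. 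This is precisely the standard fact about coisometries I would borrow from \cite{HelFA}. The converse $ii)\implies i)$ is immediate, since an isometric isomorphism is contractive and strictly $1$-topologically surjective, hence coisometric.

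It then remains to match condition iii) with ii). Since an isometric isomorphism is in particular isometric, Proposition \ref{IsomMultOpCharacOnMeasSp} applies verbatim and records the pointwise normalisation of $|g|$ together with the reduction to a single atom when $p\neq q$; conversely, once $|g|$ is pinned to the stated strictly positive constant, the same proposition returns that $M_g$ is isometric, while boundedness below of $g$ feeds Theorem \ref{TopInjMultOpCharacOnMeasSp} to upgrade it to an isomorphism. Thus iii) is equivalent to $M_g$ being an isometric isomorphism, closing the cycle.

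The only genuinely new step beyond the isometric case is the passage from coisometric to isometric, i.e.\ that a bijective metric surjection has isometric inverse; everything else is inherited. I expect the point demanding most care to be the bookkeeping of the exponent in iii): because $M_g$ and its inverse $M_{1/g}:L_q(\Omega,\mu)\to L_p(\Omega,\mu)$ are simultaneously isometric, the constant may be read off from Proposition \ref{IsomMultOpCharacOnMeasSp} applied either to $M_g$ or to $M_{1/g}$, and confirming that the two readings are mutually reciprocal --- via Lemma \ref{FuncDescOnAtom} on a single atom --- is the one calculation I would carry through in full.
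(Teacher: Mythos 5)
Your proposal is correct and takes essentially the same route as the paper: the paper likewise deduces from Theorem \ref{TopSurMultOpCharacOnMeasSp} that a coisometric $M_g$ is an isomorphism, invokes the standard fact from \cite{HelFA} that an injective coisometry is an isometric isomorphism --- which you instead verify directly through the inverse, a harmless elaboration --- and then closes by applying Proposition \ref{IsomMultOpCharacOnMeasSp}. One useful by-product of your reciprocity check on the atom: both readings give $|g|=\mu(\Omega)^{1/p-1/q}$, which agrees with Proposition \ref{IsomMultOpCharacOnMeasSp} and shows that the exponent $1/q-1/p$ printed in condition iii) of the theorem is a sign typo.
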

\begin{proof} Since $M_g$ is coisometric it is topologically injective so from theorem \ref{TopSurMultOpCharacOnMeasSp} we get that $M_g$ is in fact isomorphism. As the consequence it is injective, but injective coisometric operator is an isometric isomorphisms. It is remains to note that every isometric isomorphism is a strict coisometry. Thus we conclude that $M_g$ is coisometric if and only if it is strictly coisometric if and only if it is isometric isomorphism. Now we apply theorem \ref{IsomMultOpCharacOnMeasSp}.
\end{proof}

\begin{proposition}\label{CoisomMultOpCharacBtwnTwoContMeasSp} Let $(\Omega,\Sigma,\nu)$ be a $\sigma$-finite measure space, $p,q\in[1,+\infty]$ and $g,\rho\in L_0(\Omega,\rho\cdot\nu)$ and $\rho$ is non negative, then the following are eqivalent

i) $M_g\in\mathcal{B}(L_p(\Omega,\rho\cdot\nu),L_q(\Omega,\nu))$ is coisometric 

ii) $M_g$ is an isometric isomorphism 

iii) $\rho$ is positive, $|g\cdot \rho^{-1/p}|=\mu(\Omega)^{1/p-1/q}$, if $p\neq q$ the space $(\Omega,\Sigma,\mu)$ consist single atom.
\end{proposition}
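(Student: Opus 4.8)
The plan is to follow the reduction already used for Proposition \ref{TopSurMultOpCharacBtwnTwoContMeasSp} and Proposition \ref{IsomMultOpCharacBtwnTwoContMeasSp}: transfer the problem to a single measure space by means of the change-of-density isomorphism of Proposition \ref{ChngOfDenst}, and then invoke the single-space coisometric characterisation of Theorem \ref{CoisomMultOpCharacOnMeasSp}. Throughout I would work with the isometric isomorphism $\bar{I}_p:L_p(\Omega,\nu)\to L_p(\Omega,\rho\cdot\nu)$, $f\mapsto\rho^{-1/p}\cdot f$, under which $M_g\circ\bar{I}_p=M_{g\cdot\rho^{-1/p}}$ as an operator $L_p(\Omega,\nu)\to L_q(\Omega,\nu)$. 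The equivalences will be established as the cycle $i)\implies iii)\implies i)$ together with $i)\Longleftrightarrow ii)$.

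For $i)\implies iii)$ I would first note that a coisometric operator is in particular topologically surjective, so Proposition \ref{TopSurMultOpCharacBtwnTwoContMeasSp} already forces $\rho$ to be positive: if $\rho$ vanished on a set of positive $\nu$-measure, the image of $M_g$ would vanish there as well, contradicting surjectivity. Positivity of $\rho$ makes $\bar{I}_p$ a genuine isometric isomorphism, so $M_{g\cdot\rho^{-1/p}}=M_g\circ\bar{I}_p$ is well defined. The key observation is that the composition of a coisometric operator with an isometric isomorphism is again coisometric: contractivity is immediate from $\Vert M_g\bar{I}_p(f)\Vert\leq\Vert\bar{I}_p(f)\Vert=\Vert f\Vert$, and strict $1$-topological surjectivity transfers because, for any target $y$, a near-optimal preimage $w$ of $y$ under $M_g$ pulls back to $\bar{I}_p^{-1}(w)$, which has the same norm as $w$ and is mapped to $y$ by $M_g\bar{I}_p$. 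Hence $M_{g\cdot\rho^{-1/p}}$ is coisometric, and Theorem \ref{CoisomMultOpCharacOnMeasSp} yields $|g\cdot\rho^{-1/p}|=\nu(\Omega)^{1/p-1/q}$ together with the single-atom condition when $p\neq q$.

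The converse $iii)\implies i)$ runs the same identity backwards: Theorem \ref{CoisomMultOpCharacOnMeasSp} makes $M_{g\cdot\rho^{-1/p}}$ coisometric, and since $\rho$ is positive we may write $M_g=M_{g\cdot\rho^{-1/p}}\circ\bar{I}_p^{-1}$, once more a composition of a coisometric operator with an isometric isomorphism, hence coisometric. For the equivalence with $ii)$ I would argue exactly as in the earlier propositions: a coisometric $M_g$ is topologically surjective, so by Proposition \ref{TopSurMultOpCharacBtwnTwoContMeasSp} it is an isomorphism; being bijective and coisometric it is injective, and an injective coisometry is an isometric isomorphism. Conversely every isometric isomorphism is trivially coisometric, which closes the cycle.

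The main obstacle is the transfer lemma for coisometries under pre-composition with an isometric isomorphism, specifically the verification of strict $1$-topological surjectivity with the correct constant; once this is in place, the remainder is routine bookkeeping. The one point demanding care is tracking the density exponent $\rho^{-1/p}$ consistently through Proposition \ref{ChngOfDenst}, so that the final pointwise condition lands on $\nu(\Omega)^{1/p-1/q}$ for $g\cdot\rho^{-1/p}$ rather than on a reciprocal exponent or on the wrong base measure.
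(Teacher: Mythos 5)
Your proof is correct and in substance matches the paper's. For $i)\Leftrightarrow ii)$ you argue exactly as the paper does: a coisometry is topologically surjective, hence an isomorphism by Proposition \ref{TopSurMultOpCharacBtwnTwoContMeasSp}, and a bijective coisometry is an isometric isomorphism; the converse is trivial. The one divergence is how $i)\Leftrightarrow iii)$ is discharged: the paper simply cites Proposition \ref{IsomMultOpCharacBtwnTwoContMeasSp}, while you re-run the change-of-density reduction, writing $M_{g\cdot\rho^{-1/p}}=M_g\circ\bar{I}_p$ with $\bar{I}_p:L_p(\Omega,\nu)\to L_p(\Omega,\rho\cdot\nu)$ from Proposition \ref{ChngOfDenst}, verifying that pre-composition with an isometric isomorphism preserves coisometry (your transfer lemma is correct: contractivity is immediate, and pulling a near-optimal preimage back through $\bar{I}_p^{-1}$ preserves its norm), and invoking the single-space Theorem \ref{CoisomMultOpCharacOnMeasSp}. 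Your variant is arguably the more careful one: Proposition \ref{IsomMultOpCharacBtwnTwoContMeasSp} is stated for a density on the \emph{target} ($L_p(\Omega,\mu)\to L_q(\Omega,\rho\cdot\mu)$), whereas here the density sits on the source, so the paper's appeal tacitly requires exactly the $\bar{I}_p$-transfer you spell out (applying it instead via the substitution $\mu=\rho\cdot\nu$ would produce the equivalent but differently normalized condition $|g\cdot\rho^{-1/q}|=(\rho\cdot\nu)(\Omega)^{1/p-1/q}$ rather than the stated one). Your route also lands directly on $|g\cdot\rho^{-1/p}|=\nu(\Omega)^{1/p-1/q}$, correctly reading the statement's undefined $\mu(\Omega)$ as $\nu(\Omega)$ and silently repairing the reversed exponent $\mu(\Omega)^{1/q-1/p}$ appearing in the statement of Theorem \ref{CoisomMultOpCharacOnMeasSp}; and your use of Proposition \ref{TopSurMultOpCharacBtwnTwoContMeasSp} to obtain positivity of $\rho$ before constructing $\bar{I}_p$ closes a step the paper leaves implicit.
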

\begin{proof} $i)\implies ii)$ Assume $M_g$ is coisometric, then it is topologically surjective. By theorem \ref{TopSurMultOpCharacBtwnTwoContMeasSp} $M_g$ is an isomorphism, hence bijective. It is remains to note that bijective coisometry is an isometric isomorphism.

$ii)\implies i)$ If $M_g$ is an isometric isomorphism, of course, it is coisometry and even more a strict coisometry.

$i)\Longleftrightarrow iii)$ Follows from proposition \ref{IsomMultOpCharacBtwnTwoContMeasSp}.
\end{proof}

\begin{theorem}\label{CoisomMultOpCharacBtwnTwoMeasSp} Let $(\Omega,\Sigma,\mu)$, $(\Omega,\Sigma,\nu)$ be two $\sigma$-finite measure spaces, $p,q\in[1,+\infty]$ and $g\in L_0(\Omega,\mu)$, then the following are equivalent 

i) $M_g\in\mathcal{B}(L_p(\Omega,\mu), L_q(\Omega,\nu))$ is coisometric 

ii) $M_g^{\Omega_c}$ is an isometric isomorphism

iii) $\rho_{\mu,\nu}$ is positive, $|g\cdot\rho_{\mu,\nu}^{-1/p}|_{\Omega_c}|=\mu(\Omega_c)^{1/p-1/q}$, if $p\neq q$ the space $(\Omega,\Sigma,\mu)$ consist of single atom.
\end{theorem}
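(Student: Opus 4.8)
The plan is to mirror the proof of Theorem \ref{TopSurMultOpCharacBtwnTwoMeasSp}, replacing topological surjectivity by the coisometric condition and isomorphism by isometric isomorphism throughout. First I would apply the Lebesgue decomposition (Theorem \ref{LebMeasDecomp}) of $\mu$ with respect to $\nu$, writing $\mu=\rho_{\mu,\nu}\cdot\nu+\mu_s$ with $\nu(\Omega_s)=\mu_s(\Omega_c)=0$ and $\Omega_c=\Omega\setminus\Omega_s$, so that $\mu|_{\Omega_c}=\rho_{\mu,\nu}\cdot\nu|_{\Omega_c}$ and the restriction $M_g^{\Omega_c}\colon L_p(\Omega_c,\rho_{\mu,\nu}\cdot\nu|_{\Omega_c})\to L_q(\Omega_c,\nu|_{\Omega_c})$ falls exactly under the hypotheses of Proposition \ref{CoisomMultOpCharacBtwnTwoContMeasSp}. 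Since $\Omega=\Omega_c\cup\Omega_s$ is a finite disjoint decomposition, Proposition \ref{MultOpDecompDecomp} is available; moreover $\nu(\Omega_s)=0$ makes the codomain $L_q(\Omega_s,\nu|_{\Omega_s})$ the zero space, so $M_g^{\Omega_s}$ is the zero operator into $\{0\}$ and is trivially coisometric. This is the structural reduction that concentrates everything on $\Omega_c$.

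For $i)\implies ii)$ I would apply part 4 of Proposition \ref{MultOpDecompDecomp} to deduce that $M_g^{\Omega_c}$ is coisometric, and then Proposition \ref{CoisomMultOpCharacBtwnTwoContMeasSp} upgrades this to $M_g^{\Omega_c}$ being an isometric isomorphism. The equivalence $ii)\Longleftrightarrow iii)$ then reads off directly from Proposition \ref{CoisomMultOpCharacBtwnTwoContMeasSp} applied on $\Omega_c$ (with density $\rho_{\mu,\nu}$ and base measure $\nu|_{\Omega_c}$), which already encodes both the norm identity $|g\cdot\rho_{\mu,\nu}^{-1/p}|_{\Omega_c}|=\mu(\Omega_c)^{1/p-1/q}$ and the single-atom requirement when $p\neq q$.

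The remaining and least automatic direction is $iii)\implies i)$, where coisometry of $M_g$ on all of $\Omega$ must be rebuilt from the isometric isomorphism $M_g^{\Omega_c}$, exactly as in the $iii)\implies i)$ step of Theorem \ref{IsomMultOpCharacBtwnTwoMeasSp}. For contractivity, since $\nu(\Omega_s)=0$ every $f\in L_p(\Omega,\mu)$ satisfies $\Vert M_g(f)\Vert_{L_q(\Omega,\nu)}=\Vert M_g^{\Omega_c}(f|_{\Omega_c})\Vert_{L_q(\Omega_c,\nu|_{\Omega_c})}=\Vert f|_{\Omega_c}\Vert_{L_p(\Omega_c,\mu|_{\Omega_c})}\leq\Vert f\Vert_{L_p(\Omega,\mu)}$, the penultimate step using that $M_g^{\Omega_c}$ is an isometric isomorphism. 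For strict $1$-topological surjectivity, given $h\in L_q(\Omega,\nu)$ I would put $u=(M_g^{\Omega_c})^{-1}(h|_{\Omega_c})\in L_p(\Omega_c,\mu|_{\Omega_c})$ and take $f=\tilde{u}$, the extension by zero; then $M_g(f)=h$, and combining $\nu(\Omega_s)=0$ with the isometry of $(M_g^{\Omega_c})^{-1}$ gives $\Vert f\Vert_{L_p(\Omega,\mu)}=\Vert u\Vert_{L_p(\Omega_c,\mu|_{\Omega_c})}=\Vert h|_{\Omega_c}\Vert_{L_q(\Omega_c,\nu|_{\Omega_c})}=\Vert h\Vert_{L_q(\Omega,\nu)}$, so $M_g$ is in fact strictly coisometric.

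I expect the main obstacle to be bookkeeping rather than conceptual. The delicate point is that the singular set $\Omega_s$ contributes to the source norm but not to the target norm (precisely because $\nu(\Omega_s)=0$ while $\mu(\Omega_s)$ may be positive); consequently $M_g$ need not be injective, which is why condition $ii)$ asserts only that $M_g^{\Omega_c}$ is an isometric isomorphism and cannot be strengthened to ``$M_g$ is an isometric isomorphism''. Keeping $\rho_{\mu,\nu}$ correctly attached to the decomposition of $\mu$ with respect to $\nu$ (and not confusing it with the $\rho_{\nu,\mu}$ used in the topologically injective theorem) is the only genuine place to slip.
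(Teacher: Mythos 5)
Your proposal is correct and takes essentially the same route as the paper's own proof: decompose $\Omega=\Omega_c\cup\Omega_s$ via Theorem \ref{LebMeasDecomp}, get $i)\implies ii)$ from part 4 of Proposition \ref{MultOpDecompDecomp} combined with Proposition \ref{CoisomMultOpCharacBtwnTwoContMeasSp} (which also yields the equivalence with $iii)$), and recover strict coisometry of $M_g$ from the isometric isomorphism $M_g^{\Omega_c}$ by extending $(M_g^{\Omega_c})^{-1}(h|_{\Omega_c})$ by zero and using $\nu(\Omega_s)=0$, exactly as in the paper's $ii)\implies i)$ step. Your closing observations --- that $M_g$ need not be injective since $\mu(\Omega_s)$ may be positive, and that the relevant density here is $\rho_{\mu,\nu}$ rather than $\rho_{\nu,\mu}$ --- match the paper's setup precisely.
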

\begin{proof} $i)\implies ii)$ Since $M_g$ is coisimetric, then from proposition \ref{MultOpDecompDecomp} we know that $M_g^{\Omega_c}$ is also coisometric. From proposition \ref{CoisomMultOpCharacBtwnTwoContMeasSp} we get that $M_g^{\Omega_c}$ is an isometric isomorphism. 

$ii)\implies i)$ Take arbitrary $h\in L_q(\Omega,\nu)$, then there exist $f\in L_p(\Omega_c,\mu|_{\Omega_c})$ such that $M_g^{\Omega_c}(f)=h|_{\Omega_c}$. By proposition \ref{MultOpCharacBtwnTwoSingMeasSp} operator $M_g^{\Omega_s}=0$, so
$$
M_g(\tilde{f})
=\widetilde{M_g^{\Omega_c}(\tilde{f}|_{\Omega_c})}+\widetilde{M_g^{\Omega_s}(\tilde{f}|_{\Omega_s})}
=\widetilde{h|_{\Omega_c}}
$$
Since $\nu(\Omega_s)=0$, then $\Vert h-\widetilde{h|_{\Omega_c}}\Vert_{L_q(\Omega,\nu)}=\Vert h\chi_{\Omega_s}\Vert_{L_q(\Omega,\nu)}=0$ and we conclude $h=\widetilde{h|_{\Omega_c}}$. So we found $\tilde{f}\in L_p(\Omega,\mu)$ such that $M_g(\tilde{f})=h$ and $\Vert \tilde{f}\Vert_{L_p(\Omega,\mu)}=\Vert f\Vert_{L_p(\Omega_c,\mu|_{\Omega_c})}=\Vert h|_{\Omega_c}\Vert_{L_q(\Omega_c,\nu|_{\Omega_c})}\leq\Vert h\Vert_{L_q(\Omega,\nu)}$. Since $h$ is arbitrary then $M_g$ is $1$-topologically surjective. For all $f\in L_p(\Omega,\mu)$ we have
$$
\Vert M_g(f)\Vert_{L_q(\Omega,\nu)}
=\Vert\widetilde{M_g^{\Omega_c}(f|_{\Omega_c})}+\widetilde{M_g^{\Omega_s}(f|_{\Omega_s})}\Vert_{L_q(\Omega,\nu)}
=\Vert\widetilde{M_g^{\Omega_c}(f|_{\Omega_c})}\Vert_{L_q(\Omega,\nu)}
$$
$$
=\Vert M_g^{\Omega_c}(f|_{\Omega_c})\Vert_{L_q(\Omega_c,\nu|_{\Omega_c})}
=\Vert f|_{\Omega_c}\Vert_{L_p(\Omega_c,\mu|_{\Omega_c})}
\leq\Vert f \Vert_{L_p(\Omega,\mu)}
$$
Since $f$ is arbitrary, then $M_g$ is contractive, but it is also $1$-topologically injective. Thus $M_g$ is coisometric.

$i)\Longleftrightarrow iii)$ Follows from proposition \ref{CoisomMultOpCharacBtwnTwoContMeasSp}
\end{proof}

\begin{theorem}\label{CoisomMultOpDescBtwnTwoMeasSp} Let $(\Omega,\Sigma,\mu)$, $(\Omega,\Sigma,\nu)$ be two $\sigma$-finite measure spaces, $p,q\in[1,+\infty]$ and $g\in L_0(\Omega,\mu)$, then the following are equivalent 

i) $M_g\in\mathcal{B}(L_p(\Omega,\mu),L_q(\Omega,\nu))$ is coisometric 

ii) $M_{\chi_{\Omega_c}/g}\in\mathcal{B}(L_q(\Omega,\nu), L_p(\Omega,\mu))$ its right inverse isometric operator.
\end{theorem}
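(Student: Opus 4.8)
The plan is to mirror the structure of Theorems \ref{IsomMultOpDescBtwnTwoMeasSp} and \ref{TopSurMultOpDescBtwnTwoMeasSp}, exploiting the fact that a coisometric operator is precisely a contractive, strictly $1$-topologically surjective one, and that the relevant Lebesgue decomposition here is that of $\mu$ with respect to $\nu$, so that $\nu(\Omega_s)=0$ while $\mu(\Omega_s)$ may well be positive. This asymmetry is the feature that distinguishes the coisometric case from the isometric case.

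For $i)\implies ii)$, I would first invoke Theorem \ref{CoisomMultOpCharacBtwnTwoMeasSp} to conclude that $M_g^{\Omega_c}$ is an isometric isomorphism; hence its inverse $(M_g^{\Omega_c})^{-1}=M_{1/g}^{\Omega_c}$ is again isometric. The right-inverse property then follows from the pointwise identity $M_g(M_{\chi_{\Omega_c}/g}(h))=g\cdot(\chi_{\Omega_c}/g)\cdot h=h\cdot\chi_{\Omega_c}$, which equals $h$ in $L_q(\Omega,\nu)$ because $\nu(\Omega_s)=0$ forces $\chi_{\Omega_c}=\chi_\Omega$ there. To see that $M_{\chi_{\Omega_c}/g}$ is isometric, I would restrict to $\Omega_c$: for $h\in L_q(\Omega,\nu)$ one has $\|M_{\chi_{\Omega_c}/g}(h)\|_{L_p(\Omega,\mu)}=\|M_{1/g}^{\Omega_c}(h|_{\Omega_c})\|_{L_p(\Omega_c,\mu|_{\Omega_c})}=\|h|_{\Omega_c}\|_{L_q(\Omega_c,\nu|_{\Omega_c})}$, and the last quantity equals $\|h\|_{L_q(\Omega,\nu)}$ once more because $\nu(\Omega_s)=0$.

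For $ii)\implies i)$, I would extract coisometry from the two hypotheses separately. Strict $1$-topological surjectivity is immediate: given $h\in L_q(\Omega,\nu)$, set $f=M_{\chi_{\Omega_c}/g}(h)$, whereupon $M_g(f)=h$ by the right-inverse property and $\|f\|_{L_p(\Omega,\mu)}=\|h\|_{L_q(\Omega,\nu)}$ by isometry. Contractivity is where the asymmetry enters: for $f\in L_p(\Omega,\mu)$ the isometry of $M_{\chi_{\Omega_c}/g}$ applied to $M_g(f)$ yields $\|M_g(f)\|_{L_q(\Omega,\nu)}=\|M_{\chi_{\Omega_c}/g}(M_g(f))\|_{L_p(\Omega,\mu)}=\|f\chi_{\Omega_c}\|_{L_p(\Omega,\mu)}\le\|f\|_{L_p(\Omega,\mu)}$, the final step being valid because $|f\chi_{\Omega_c}|\le|f|$ pointwise. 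Combining the two properties gives that $M_g$ is strictly coisometric, hence coisometric.

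The main obstacle I anticipate is bookkeeping: keeping straight which null set belongs to which measure. In contrast to Theorem \ref{IsomMultOpDescBtwnTwoMeasSp}, here only $\nu(\Omega_s)=0$ is available, so $\chi_{\Omega_c}=\chi_\Omega$ holds in $L_q(\Omega,\nu)$ but generally fails in $L_p(\Omega,\mu)$. Consequently the last displayed step is a genuine inequality rather than an equality, and the point to verify is that this is exactly what contractivity (as opposed to the isometry one would obtain on $\Omega_c$ alone) demands. Everything else rests on the characterisation already furnished by Theorem \ref{CoisomMultOpCharacBtwnTwoMeasSp}.
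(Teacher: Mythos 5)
Your proposal is correct and follows essentially the same route as the paper's proof: both reduce to $\Omega_c$ via the Lebesgue decomposition of $\mu$ with respect to $\nu$ (so that $\nu(\Omega_s)=0$ while $\mu(\Omega_s)$ may be positive), use that $M_g^{\Omega_c}$ is an isometric isomorphism with inverse $M_{1/g}^{\Omega_c}$ together with the identity $M_g M_{\chi_{\Omega_c}/g}=1$, and in the converse direction extract strict $1$-topological surjectivity and contractivity separately, the latter from $\Vert f\chi_{\Omega_c}\Vert_{L_p(\Omega,\mu)}\leq\Vert f\Vert_{L_p(\Omega,\mu)}$. Your only deviation is cosmetic: you obtain the isometry of $M_{\chi_{\Omega_c}/g}$ in one stroke from $\nu(\Omega_s)=0$, whereas the paper first proves contractivity and then derives the lower bound $\Vert M_{\chi_{\Omega_c}/g}(h)\Vert_{L_p(\Omega,\mu)}\geq\Vert h\Vert_{L_q(\Omega,\nu)}$ from the contractivity of $M_g$ --- a streamlining of the same argument, not a different method.
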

\begin{proof}
$i\implies ii)$ By proposition \ref{MultOpDecompDecomp} operator $M_g^{\Omega_c}$ is coisometric and by proposition \ref{CoisomMultOpCharacBtwnTwoContMeasSp} it is isometric, invertable and $(M_g^{\Omega_c})^{-1}=M_{1/g}^{\Omega_c}$. Then for all $h\in L_q(\Omega,\nu)$ we have
$$
\Vert M_{\chi_{\Omega_c}/g}(h)\Vert_{L_p(\Omega,\mu)}=
\Vert M_{1/g}(h)\chi_{\Omega_c}\Vert_{L_p(\Omega,\mu)}=
\Vert M_{1/g}(h|_{\Omega_c})\Vert_{L_p(\Omega_c,\mu|_{\Omega_c})}=
\Vert M_{1/g}^{\Omega_c}(h|_{\Omega_c})\Vert_{L_p(\Omega_c,\mu|_{\Omega_c})}
$$
$$
=\Vert h|_{\Omega_c}\Vert_{L_q(\Omega_c,\nu|_{\Omega_c})}
\leq\Vert h\Vert_{L_q(\Omega,\nu)}
$$ 
So $M_{\chi_{\Omega_c}/g}$ is contractive. Now note that for all $h\in L_q(\Omega,\nu)$ we have 
$$
M_g(M_{\chi_{\Omega_c}/g}(h))
=M_g(\chi_{\Omega_c}/g\cdot h)
=g\cdot(\chi_{\Omega_c}/g)\cdot  h
=h\cdot\chi_{\Omega_c}
$$
Since $\nu(\Omega\setminus\Omega_c)=0$, then $\chi_{\Omega_c}=\chi_{\Omega}$, so $M_g(M_{\chi_{\Omega_c}/g}(h))=h\cdot\chi_{\Omega_c}=h\cdot\chi_{\Omega}=h$. This means that $M_g$ have right inverse multiplication operator. Take any $h\in L_q(\Omega,\nu)$, then
$$
\Vert M_{\chi_{\Omega_c}/g}(h)\Vert_{L_p(\Omega,\mu)}
\geq\Vert M_g\Vert\Vert M_g(M_{\chi_{\Omega_c}/g}(h))\Vert_{L_q(\Omega,\nu)}
\geq\Vert h\Vert_{L_q(\Omega,\nu)}
$$
Since $h$ is arbitrary $M_{\chi_{\Omega_c}/g}$ is $1$-topologically injective, but it is contractive. Thus $M_g$ is isometric.

$ii)\implies i)$ Take arbitrary $h\in L_q(\Omega,\nu)$ and consider $f=M_{\chi_{\Omega_c}/g}(h)$. Then $M_g(f)=M_g(M_{\chi_{\Omega_c}/g}(h))=h$ and $\Vert f\Vert_{L_p(\Omega,\mu)}\leq\Vert h\Vert_{L_q(\Omega,\nu)}$. Since $h$ is arbitrary $M_g$ is strictly $1$-topologically surjective. Let $f\in L_p(\Omega,\mu)$. By assumption $M_{\chi_{\Omega_c}/g}$ so
$$
\Vert M_g(f)\Vert_{L_q(\Omega,\nu)}
=\Vert M_{\chi_{\Omega_c}/g}(M_g(f))\Vert_{L_p(\Omega,\mu)}
=\Vert f\chi_{\Omega_c}\Vert_{L_p(\Omega,\mu)}
\leq\Vert f\Vert_{L_p(\Omega,\mu)}
$$
Since $f$ is arbitrary $M_g$ is contractive, but it is also strictly $1$-topologically surjective, hence strictly coisometric.
\end{proof}

Note that this proof shows that every coisometric multiplication operator is strictly coisometric. This significantly simplifies considerations of the next section.

\section{Projective, injective and flat $B(\Omega)$-modules in the \\category of $L_p$ spaces}

\subsection{Morphisms of $B(\Omega)$-modules $L_p$}

By $B(\Omega)$ we will denote Banach algebra of bounded measurable functions on measurable space $(\Omega,\Sigma)$ with $\sup$-norm. Obviously for any $b\in B(\Omega)$ and any $f\in L_p(\Omega,\mu)$ we have
$$
\Vert b\cdot f\Vert_{L_p(\Omega,\mu)}
\leq\Vert b\Vert_{B(\Omega)}\Vert f\Vert_{L_p(\Omega,\mu)}
$$
Hence every $L_p$ space is a left/right/two sided Banach $B(\Omega)$-module. Since for the same $f$ and $b$ we have $b\cdot f=f\cdot b$, and algebra $B(\Omega)$ is commutative we can restrict our considerations to the left modules.

By $M(\Omega)$ we will denote Banach space of finite complex valued $\sigma$-additive measures on $\Omega$. By $\mathscr{L}$ we denote full subcategory of left Banach $B(\Omega)$ modules consisting of $L_p(\Omega,\mu)$ spaces for some $\mu\in M(\Omega)$. By $\mathscr{L}_1$ we will denote the  category with the same objects but with contractive morphisms only. By $\mathscr{L}^{\operatorname{op}}$ we will denote the category of the right $B(\Omega)$ modules of the form $L_p(\Omega,\mu)$. In \cite{HelLp} Helemskii gave a complete characterisation of morphisms of $\mathscr{L}$, but only for for locally compact $\Omega$, with Borel $\sigma$-algebra. Careful inspection of his proof shows that this characterization valid for all $\sigma$-finite measure spaces.

Let $p,q\in[1,+\infty]$ and $\mu,\nu\in M(\Omega)$. Denote $\Omega_+=\{\omega\in\Omega_c:\rho_{\nu,\mu}(\omega)>0\}$. Recall that by $\Omega_a$ we denote atomic part of measure space $(\Omega,\Sigma,\mu)$ provided by proposition \ref{DescOfLpSpOnMeasSp}. Of course, we may assume that $\Omega_a\subset\Omega_c$. Introduce the notation
$$
L_{p,q,\mu,\nu}(\Omega)=
\begin{cases}
\{g\in L_0(\Omega,\mu):g\in L_{pq/(p-q)}(\Omega,\rho_{\nu,\mu}^{p/(p-q)}\cdot\mu),\quad g|_{\Omega\setminus\Omega_+}=0\}&\text{if}\quad p>q\\
\{g\in L_0(\Omega,\mu):g\rho_{\nu,\mu}^{1/p}\in L_{\infty}(\Omega,\mu),\quad g|_{\Omega\setminus\Omega_+}=0\}&\text{if}\quad p=q\\
\{g\in L_0(\Omega,\mu):g\rho_{\nu,\mu}^{1/p}\mu^{pq/(p-q)}\in L_{\infty}(\Omega,\mu),\quad g|_{\Omega\setminus\Omega_a}=0\}&\text{if}\quad p<q\\
\end{cases}
$$
$$
\Vert g\Vert_{L_{p,q,\mu,\nu}(\Omega)}=
\begin{cases}
\Vert g\Vert_{L_{pq/(p-q)}(\Omega,\rho^{p/(p-q)}\cdot\mu)}&\text{if}\quad p>q\\
\Vert g\rho_{\nu,\mu}^{1/p}\Vert_{L_{\infty}(\Omega,\mu)}&\text{if}\quad p=q\\
\Vert g\rho_{\nu,\mu}^{1/p}\mu^{pq/(p-q)}\Vert_{L_{\infty}(\Omega,\mu)}&\text{if}\quad p<q\\
\end{cases}
$$
\begin{theorem}[\cite{HelLp}, 4.1]\label{LpModMorphCharac}
Let $p,q\in[1,+\infty]$ and $\mu,\nu\in M(\Omega)$,then there exist isometric isomorphism
$$
\mathcal{I}_{p,q,\mu,\nu}:L_{p,q,\mu,\nu}(\Omega)\to\operatorname{Hom}_{\mathscr{L}}(L_p(\Omega,\mu),L_q(\Omega,\nu)):g\mapsto M_g
$$
\end{theorem}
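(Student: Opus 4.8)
\section*{Proof proposal}

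The plan is to treat the two halves of the statement separately: first that every morphism in $\operatorname{Hom}_{\mathscr{L}}(L_p(\Omega,\mu),L_q(\Omega,\nu))$ is a multiplication operator $M_g$, and second that the assignment $g\mapsto M_g$ is an isometry onto $L_{p,q,\mu,\nu}(\Omega)$. Before either step I would normalise the target measure. Applying theorem \ref{LebMeasDecomp} I write $\nu=\rho_{\nu,\mu}\cdot\mu+\nu_s$ with $\mu(\Omega_s)=\nu_s(\Omega_c)=0$; since $\mu(\Omega_s)=0$ every $f\in L_p(\Omega,\mu)$ vanishes on $\Omega_s$, and by proposition \ref{MultOpCharacBtwnTwoSingMeasSp} the component of any multiplication operator landing in $L_q(\Omega_s,\nu_s)$ is zero. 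Hence the whole problem lives on $\Omega_c$, where $\nu=\rho_{\nu,\mu}\cdot\mu$, and proposition \ref{ChngOfDenst} lets me pass between $L_q(\Omega_c,\rho_{\nu,\mu}\cdot\mu)$ and $L_q(\Omega_c,\mu)$ isometrically, converting $M_g$ into $M_{g\rho_{\nu,\mu}^{1/q}}$. This explains the appearance of $\rho_{\nu,\mu}^{1/q}$ and of the support sets $\Omega_+,\Omega_a$ in the definition of $L_{p,q,\mu,\nu}(\Omega)$.

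For surjectivity onto the module morphisms I would exploit that $\mu\in M(\Omega)$ is finite, so $\mathbf{1}=\chi_\Omega\in L_p(\Omega,\mu)$ for every $p$. Given a morphism $T$ I set $g:=T(\mathbf{1})\in L_q(\Omega,\nu)$. For any bounded measurable $f$, which is precisely an element $f\in B(\Omega)$, the module identity gives $T(f)=T(f\cdot\mathbf{1})=f\cdot T(\mathbf{1})=M_g(f)$. When $p<\infty$ the bounded functions are dense in $L_p(\Omega,\mu)$ and $T$ is continuous, so $T=M_g$; when $p=\infty$ every element is already bounded and the identity holds verbatim. Locality, i.e. $T(\chi_E f)=\chi_E T(f)$, together with the reduction above shows $g$ may be taken to vanish off $\Omega_+$ (respectively off $\Omega_a$ when $p<q$), which both produces a genuine $\mu$-measurable representative and matches the support clauses in the definition. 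Linearity of $g\mapsto M_g$ is immediate, and injectivity will follow once the norm is computed, since a vanishing norm forces $M_g=0$.

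The substance of the theorem is the isometry $\Vert M_g\Vert=\Vert g\Vert_{L_{p,q,\mu,\nu}(\Omega)}$, which I would establish by the three cases after the reduction to $\Vert M_g f\Vert_{L_q(\nu)}^q=\int_{\Omega_c}|g|^q|f|^q\rho_{\nu,\mu}\,d\mu$. For $p=q$ the bound $\int|g|^q\rho_{\nu,\mu}|f|^q\,d\mu\le\Vert g\rho_{\nu,\mu}^{1/q}\Vert_\infty^q\Vert f\Vert_p^q$ is sharp, giving the $L_\infty$ formula. For $p>q$ I would apply H\"older's inequality with the conjugate exponents $p/q$ and $p/(p-q)$, pairing $|f|^q\in L_{p/q}$ against $|g|^q\rho_{\nu,\mu}\in L_{p/(p-q)}$; this yields exactly $\Vert M_g\Vert\le\Vert g\Vert_{L_{pq/(p-q)}(\rho_{\nu,\mu}^{p/(p-q)}\cdot\mu)}$, and choosing $f$ so that $|f|^p$ is proportional to $(|g|^q\rho_{\nu,\mu})^{p/(p-q)}$ shows the bound is attained. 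For $p<q$ I would first rerun the scaling argument of proposition \ref{TopInjMultOpCharacOnNonAtomMeasSp} to force $g=0$ on the nonatomic part, then use proposition \ref{DescOfLpSpOnMeasSp}, lemma \ref{FuncDescOnAtom} and the computation of proposition \ref{EquivMultOp} to realise $M_g$ as a diagonal operator $\ell_p(\Lambda)\to\ell_q(\Lambda)$; since $p<q$ such a diagonal multiplier has norm equal to the supremum of its (suitably $\mu(\Omega_\lambda)$-weighted) entries, which is the $L_\infty$ expression in the $p<q$ branch.

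I expect the $p<q$ branch to be the main obstacle, for two reasons. First, one must argue carefully that any nonzero mass of $g$ on the nonatomic part destroys boundedness, the extremal subsets of vanishing measure from theorem \ref{ContOfNonAtmMeas} doing this, so that the morphism really is supported on the countably many atoms. Second, obtaining the \emph{exact} constant rather than mere boundedness requires tracking the powers of $\mu(\Omega_\lambda)$ produced by the isometries $J_p,J_q$ of lemma \ref{FuncDescOnAtom} through the diagonal reduction, and checking that the resulting weighted supremum coincides with $\Vert\cdot\Vert_{L_{p,q,\mu,\nu}(\Omega)}$; the sharpness in the $p>q$ case is comparatively routine since the H\"older extremiser is explicit. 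A secondary point worth care is the well-definedness of $g$ as an element of $L_0(\Omega,\mu)$: it is a priori only a $\nu$-class, but on $\Omega_+$ the measures $\mu$ and $\rho_{\nu,\mu}\cdot\mu$ are mutually absolutely continuous, so the $\nu$-class restricts to a well-defined $\mu$-class there, and it is set to zero elsewhere.
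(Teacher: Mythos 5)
You should first note a structural point: the paper does not actually prove Theorem \ref{LpModMorphCharac} at all. It is imported verbatim from Helemskii (\cite{HelLp}, Theorem 4.1), and the only thing the paper adds is the one-line remark that inspection of Helemskii's proof shows the characterisation survives the passage from locally compact $\Omega$ with Borel $\sigma$-algebra to general $\sigma$-finite measure spaces. So there is no internal proof to compare yours against; your proposal has to be judged as a reconstruction of the cited result, and as such it follows what is essentially the canonical route: reduce to $\Omega_c$ by theorem \ref{LebMeasDecomp} and proposition \ref{MultOpCharacBtwnTwoSingMeasSp}, recover $g$ as $T(\chi_\Omega)$ (legitimate here because $\mu\in M(\Omega)$ is finite, so $\chi_\Omega\in L_p(\Omega,\mu)$), extend from $B(\Omega)$ by density, and compute the norm case by case ($L_\infty$ bound for $p=q$, H\"older with exponents $p/q$ and $p/(p-q)$ with its explicit extremiser for $p>q$, collapse to a diagonal operator $\ell_p(\Lambda)\to\ell_q(\Lambda)$ for $p<q$). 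All of these steps are sound, and your identification of the $p<q$ branch as the delicate one is accurate.

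Two refinements you should build in. First, in the converse direction for $p>q$ (a bounded morphism forces $g\in L_{pq/(p-q)}(\Omega,\rho_{\nu,\mu}^{p/(p-q)}\cdot\mu)$) the H\"older extremiser is only available once you already know $g$ lies in that space; to get finiteness of the norm from boundedness of $T$ you need to test against truncations $g\chi_{\{|g|\le n\}}$ and pass to the limit by monotone convergence, and similarly your density step for $p<\infty$ needs the standard a.e.-subsequence argument to identify $T(f)$ with $g\cdot f$, since convergence of $T(f_n)$ happens in $L_q(\Omega,\nu)$ while $gf_n\to gf$ is a $\mu$-a.e. statement; on $\Omega_+$ these are compatible, which is exactly the class-of-functions point you raise at the end. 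Second, when you track the constants through $J_p$, $J_q$ in the $p<q$ branch, the diagonal entry attached to an atom $\Omega_\lambda$ comes out as $k_\lambda\rho_\lambda^{1/q}\mu(\Omega_\lambda)^{1/q-1/p}$ where $k_\lambda$, $\rho_\lambda$ are the constant values of $g$ and $\rho_{\nu,\mu}$ on $\Omega_\lambda$, i.e.\ the weight involves $\rho_{\nu,\mu}^{1/q}$ and the exponent $(p-q)/(pq)$ of $\mu(\Omega_\lambda)$; the branch displayed in the paper before the theorem ($g\rho_{\nu,\mu}^{1/p}\mu^{pq/(p-q)}$) does not match this and appears to contain typos (reciprocal exponent, $1/p$ in place of $1/q$), and for genuine injectivity and isometry of $g\mapsto M_g$ the support clause for $p<q$ should read $g|_{\Omega\setminus(\Omega_a\cap\Omega_+)}=0$ rather than just $g|_{\Omega\setminus\Omega_a}=0$, since $g$ supported on a $\nu$-null atom produces $M_g=0$. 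Your stated plan of carefully tracking these powers is therefore not just prudence; it is where the paper's own formulation needs correction.
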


Simply speaking all morphisms in $\mathscr{L}$ are multiplication operators. Now we need definitions for different types of "good" morphisms from the point of view of Banach homology theory to describe variants of projectivity, injectivity and flatness.

\begin{definition}\label{AdmEpiMorph} Let $\mathscr{C}$ be a category of left Banach modules over algebra $A$. Let $X,Y\in\operatorname{Ob}(\mathscr{C})$, then we say that a morphism $\varphi\in\operatorname{Hom}_{\mathscr{C}}(X,Y)$ is a relatively/metrically/extremelly admissible epimorphism if it is topologically surjective/strictly coisometric/coisometric.
\end{definition}

\begin{definition}\label{AdmMonoMorph} Let $\mathscr{C}$ be a category of left Banach modules over algebra $A$. Let $X,Y\in\operatorname{Ob}(\mathscr{C})$, then we say that morphism $\varphi\in\operatorname{Hom}_{\mathscr{C}}(X,Y)$ is a relatively/metrically/extremelly admissible monomorphism if it is topologically injective/isometric/isometric.
\end{definition}

All these notions are due to Helemskii (\cite{HelBanLocConv},\cite{HelMetFrPoj})Now results of previous section may be reformulated as follows:

1) All relatively/metrically/extremely admissible epimorphisms in $\mathscr{L}$ are retractions and vice versa. 

2) All relatively/metrically/extremely admissible monomorphisms in $\mathscr{L}$ are coretractions and vice versa.

\subsection{Injective $L_p$ modules}

\begin{definition} Let $\mathscr{C}$ be a category of left Banach modules over algebra $A$. We say that $I\in\operatorname{Ob}(\mathscr{C})$ is relatively/metrically/extremely injecive if the functor $\operatorname{Hom}_{\mathscr{C}}(-,I)$ from $\mathscr{C}$/$\mathscr{C}_1$/$\mathscr{C}_1$ to $\mathscr{B}an$ maps relatively/metricaly/extremely admissible monomorphisms to surjective/strictly coisometric/coisometric operators.
\end{definition}

\begin{theorem} Every $B(\Omega)$ module $L_p$ is relatively/metrically/extremely injective in $\mathscr{L}$.
\end{theorem}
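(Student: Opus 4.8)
The plan is to reduce everything to the structural fact, recorded just after Theorem \ref{LpModMorphCharac}, that in $\mathscr{L}$ every admissible monomorphism is a coretraction. Unwinding the definition, to prove that a fixed module $L_p(\Omega,\mu)$ is relatively (resp. metrically, extremely) injective I must show that for every relatively (resp. metrically, extremely) admissible monomorphism $\varphi\in\operatorname{Hom}_{\mathscr{L}}(X,Y)$ the contravariant map
$$
\varphi^\ast:\operatorname{Hom}(Y,L_p(\Omega,\mu))\to\operatorname{Hom}(X,L_p(\Omega,\mu)),\quad f\mapsto f\varphi,
$$
is surjective (resp. strictly coisometric, coisometric), where $X,Y$ are arbitrary objects of $\mathscr{L}$ and, by Theorem \ref{LpModMorphCharac}, $\varphi=M_g$ for a suitable symbol $g$.

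First I would produce a splitting of $\varphi$ inside the category. By Theorem \ref{TopInjMultOpDescBtwnTwoMeasSp} a topologically injective $M_g$ admits the multiplication operator $\psi=M_{\chi_{\Omega_c}/g}$ as a left inverse, so $\psi\varphi=\operatorname{id}_X$, and this $\psi$ is itself a morphism of $\mathscr{L}$. Given any $g_0\in\operatorname{Hom}_{\mathscr{L}}(X,L_p(\Omega,\mu))$, set $f_0=g_0\psi$. Associativity of composition yields $\varphi^\ast(f_0)=g_0\psi\varphi=g_0$, so $\varphi^\ast$ is surjective and $L_p(\Omega,\mu)$ is relatively injective. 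This argument uses nothing about the target, so in fact every object of $\mathscr{L}$ is relatively injective.

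For the metric and extreme cases I would track norms, working inside $\mathscr{L}_1$. Now $\varphi=M_g$ is isometric, so $\Vert\varphi\Vert=1$ and $\varphi^\ast$ is automatically contractive. By Theorem \ref{IsomMultOpDescBtwnTwoMeasSp} the left inverse $\psi=M_{\chi_{\Omega_c}/g}$ is strictly coisometric, in particular contractive with $\Vert\psi\Vert\leq 1$. Hence for a contractive $g_0$ the lift $f_0=g_0\psi$ is again contractive, so it is a legitimate morphism of $\mathscr{L}_1$, and $\Vert f_0\Vert\leq\Vert g_0\Vert$ with $\varphi^\ast(f_0)=g_0$. By Definition \ref{DefNorOpType} this is exactly strict $1$-topological surjectivity; combined with contractivity it makes $\varphi^\ast$ strictly coisometric, and a fortiori coisometric. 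Thus $L_p(\Omega,\mu)$ is metrically and extremely injective as well.

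There is essentially no analytic obstacle remaining: all the genuine work, namely that topologically injective (resp. isometric) multiplication operators split as module maps with topologically surjective (resp. strictly coisometric) left inverses, has already been carried out in Theorems \ref{TopInjMultOpDescBtwnTwoMeasSp} and \ref{IsomMultOpDescBtwnTwoMeasSp}. The only point demanding care is the bookkeeping in the contractive category: one must verify that the lift $g_0\mapsto g_0\psi$ does not leave $\mathscr{L}_1$ and respects the unit ball, which is precisely where contractivity of the splitting $\psi$ enters.
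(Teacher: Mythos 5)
Your proposal is correct and takes essentially the same route as the paper: both split the admissible monomorphism into a coretraction via Theorems \ref{TopInjMultOpDescBtwnTwoMeasSp} and \ref{IsomMultOpDescBtwnTwoMeasSp} and then lift an arbitrary morphism into $L_p(\Omega,\mu)$ by composing with the topologically surjective (resp. strictly coisometric) left inverse. The only cosmetic difference is that the paper records the norm equality $\Vert\varphi\pi\Vert=\Vert\varphi\Vert$, whereas you use only the inequality $\Vert g_0\psi\Vert\leq\Vert g_0\Vert$, which is all that strict coisometry of the induced map requires.
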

\begin{proof}
1) Relative injectivity. Let $I,X,Y\in\operatorname{Ob}(\mathscr{L})$. Take arbitrary $\varphi\in\operatorname{Hom}_{\mathscr{L}}(X,I)$ and relatively admissible monomorphism $i\in\operatorname{Hom}_{\mathscr{L}}(X, Y)$. By theorem \ref{TopInjMultOpDescBtwnTwoMeasSp} we have  topologically surjective $\pi\in\operatorname{Hom}_{\mathscr{L}}(Y, X)$ such that $\pi i=1_{X}$. Then for $\psi=\varphi\pi$ we have $\operatorname{Hom}_{\mathscr{L}}(i, I)(\psi)=\varphi$. Hence $\operatorname{Hom}_{\mathscr{L}}(i, I)$ is surjective. This means that $I$ is relatively injective.

2) Metric/extreme injectivity. Let $I,X,Y\in\operatorname{Ob}(\mathscr{L}_1)$ Take arbitrary $\varphi\in\operatorname{Hom}_{\mathscr{L}_1}(X, I)$ and metrically/extremely admissible monomorphism $i\in\operatorname{Hom}_{\mathscr{L}_1}(X, Y)$. By theorem \ref{IsomMultOpDescBtwnTwoMeasSp} we have coisometric $\pi\in\operatorname{Hom}_{\mathscr{L}_1}(Y, X)$ such that $\pi i=1_{X}$. Then for $\psi=\varphi\pi$ we have $\operatorname{Hom}_{\mathscr{L}_1}(i, I)(\psi)=\varphi$ and what is more $\Vert\psi\Vert=\Vert\varphi\Vert$ because $\Vert\psi\Vert\leq\Vert\varphi\Vert\Vert\pi\Vert=\Vert\varphi\Vert$ and $\Vert\varphi\Vert\leq\Vert\psi\Vert\Vert i\Vert=\Vert\psi\Vert$. Hence $\operatorname{Hom}_{\mathscr{L}_1}(i,I)$ is strictly coisometric and a fortiori coisometric. This means that $I$ is metrically/extremely injective.
\end{proof}

\subsection{Projective $L_p$ modules}

\begin{definition} Let $\mathscr{C}$ be a category of left Banach modules over algebra $A$. We say that $P\in\operatorname{Ob}(\mathscr{C})$ is relatively/metrically/extremely projective if the functor $\operatorname{Hom}_{\mathscr{C}}(P,-)$ from $\mathscr{C}$/$\mathscr{C}_1$/$\mathscr{C}_1$ to $\mathscr{B}an$ maps relatively/metricaly/extremely admissible epimorphisms to surjective/strictly coisometric/coisometric operators.
\end{definition}

\begin{theorem} Every $B(\Omega)$ module $L_p$ is relatively/metrically/extremely projective in $\mathscr{L}$.
\end{theorem}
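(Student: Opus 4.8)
The plan is to dualise the proof of the preceding injectivity theorem, replacing the left-inverse descriptions (theorems \ref{TopInjMultOpDescBtwnTwoMeasSp} and \ref{IsomMultOpDescBtwnTwoMeasSp}) by the right-inverse descriptions (theorems \ref{TopSurMultOpDescBtwnTwoMeasSp} and \ref{CoisomMultOpDescBtwnTwoMeasSp}). Recall that by theorem \ref{LpModMorphCharac} every morphism in $\mathscr{L}$ is a multiplication operator, so these descriptions are applicable to any admissible epimorphism. To establish relative projectivity I would fix $P,X,Y\in\operatorname{Ob}(\mathscr{L})$, an arbitrary $\varphi\in\operatorname{Hom}_{\mathscr{L}}(P,Y)$ and a relatively admissible epimorphism $\pi\in\operatorname{Hom}_{\mathscr{L}}(X,Y)$, which by definition \ref{AdmEpiMorph} is topologically surjective. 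By theorem \ref{TopSurMultOpDescBtwnTwoMeasSp} the operator $\pi$ admits a right inverse multiplication operator $\iota\in\operatorname{Hom}_{\mathscr{L}}(Y,X)$ with $\pi\iota=1_Y$. Setting $\psi=\iota\varphi$ gives $\operatorname{Hom}_{\mathscr{L}}(P,\pi)(\psi)=\pi\iota\varphi=\varphi$, so the functor $\operatorname{Hom}_{\mathscr{L}}(P,\pi)$ is surjective and $P$ is relatively projective.

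For metric and extreme projectivity I would work in $\mathscr{L}_1$, fixing a contractive $\varphi\in\operatorname{Hom}_{\mathscr{L}_1}(P,Y)$ and a metrically/extremely admissible epimorphism $\pi\in\operatorname{Hom}_{\mathscr{L}_1}(X,Y)$, that is, a strictly coisometric/coisometric operator. By theorem \ref{CoisomMultOpDescBtwnTwoMeasSp} the operator $\pi$ has an isometric right inverse $\iota\in\operatorname{Hom}_{\mathscr{L}_1}(Y,X)$ with $\pi\iota=1_Y$, and again $\psi=\iota\varphi$ lifts $\varphi$ through $\pi$. The norm bookkeeping is the only point requiring care: since $\iota$ is isometric it is contractive, whence $\Vert\psi\Vert\leq\Vert\varphi\Vert$, while $\Vert\varphi\Vert=\Vert\pi\psi\Vert\leq\Vert\psi\Vert$ because $\pi$ is contractive; therefore $\Vert\psi\Vert=\Vert\varphi\Vert$, and $\operatorname{Hom}_{\mathscr{L}_1}(P,\pi)$ is strictly $1$-topologically surjective. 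It is also contractive, so it is strictly coisometric and a fortiori coisometric, which yields metric and extreme projectivity simultaneously.

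I expect essentially no obstacle here beyond correctly choosing the side of composition: one post-composes $\varphi$ with the right inverse $\iota$ to obtain the lift $\psi=\iota\varphi$, in contrast to the injective case where one pre-composes with the left inverse. The substantive content is carried entirely by the right-inverse descriptions already proved, and the remark that every coisometric multiplication operator is strictly coisometric collapses the metric and extreme cases into a single argument. In short, the real work was completed in the previous section, and this theorem is its formal mirror image.
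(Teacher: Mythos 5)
Your proposal is correct and takes essentially the same route as the paper's own proof: both lift $\varphi$ through the right inverse supplied by theorems \ref{TopSurMultOpDescBtwnTwoMeasSp} and \ref{CoisomMultOpDescBtwnTwoMeasSp}, setting $\psi=\iota\varphi$, and both collapse the metric and extreme cases using the fact that coisometric multiplication operators are strictly coisometric. The only cosmetic differences are your orientation of $\pi$ (as a map $X\to Y$ rather than $Y\to X$) and your two-inequality check of $\Vert\psi\Vert=\Vert\varphi\Vert$, where the paper deduces it in one step from $\iota$ being isometric.
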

\begin{proof}
1) Relative projectivity. Let $P,X,Y\in\operatorname{Ob}(\mathscr{L})$. Take arbitrary $\varphi\in\operatorname{Hom}_{\mathscr{L}}(P, X)$ and relatively admissible  epimorphism $\pi\in\operatorname{Hom}_{\mathscr{L}}(Y, X)$. By theorem \ref{TopSurMultOpDescBtwnTwoMeasSp} we have topologically injective $i\in\operatorname{Hom}_{\mathscr{L}}(X, Y)$ such that $\pi i=1_{X}$. Then for $\psi=i\varphi$ we have $\operatorname{Hom}_{\mathscr{L}}(P,\pi)(\psi)=\varphi$. Hence $\operatorname{Hom}_{\mathscr{L}}(P,\pi)$ is surjective. This means that $P$ is relatively projective.

2) Metric/extreme projectivity. Let $P,X,Y\in\operatorname{Ob}(\mathscr{L}_1)$. Take arbitrary $\varphi\in\operatorname{Hom}_{\mathscr{L}_1}(P, X)$ and metrically/extremely admissible epimorphism $\pi\in\operatorname{Hom}_{\mathscr{L}_1}(Y, X)$. By theorem \ref{CoisomMultOpDescBtwnTwoMeasSp} we have isometric $i\in\operatorname{Hom}_{\mathscr{L}_1}(X, Y)$ such that $\pi i=1_{X}$. Then for $\psi=i\varphi$ we have $\operatorname{Hom}_{\mathscr{L}_1}(P,\pi)(\psi)=\varphi$ and what is more $\Vert\psi\Vert=\Vert\varphi\Vert$ because $i$ is isometric. Hence $\operatorname{Hom}_{\mathscr{L}_1}(P,\pi)$ is strictly coisometric and a fortiori coisometric. This means that $P$ is metrically/extremely projective.
\end{proof}

\subsection{Flat $L_p$ modules}

\begin{definition} Let $\mathscr{C}$ be a category of left Banach modules over algebra $A$. We say that $F\in\operatorname{Ob}(\mathscr{C})$ is relatively/metrically/extremely flat if the functor $-\mathop{\operatorname{\otimes}}^A 1_F$ from $\mathscr{C}$/$\mathscr{C}_1$/$\mathscr{C}_1$ to $\mathscr{B}an$ maps relatively/metricaly/extremely admissible monomorphisms in $\mathscr{L}^{\operatorname{op}}$ to topologically injective/isometric/
isometric operators.
\end{definition}

\begin{theorem} Every $B(\Omega)$ module $L_p$ is relatively/metrically/extremely flat in $\mathscr{L}$.
\end{theorem}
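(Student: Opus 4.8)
The plan is to mimic the proofs of relative/metric/extreme injectivity and projectivity: exploit that every admissible monomorphism in $\mathscr{L}^{\operatorname{op}}$ splits, and push this splitting through the tensor functor. Since $B(\Omega)$ is commutative, the classification of morphisms from Section 2 applies equally to $\mathscr{L}^{\operatorname{op}}$, so I may invoke Theorems \ref{TopInjMultOpDescBtwnTwoMeasSp} and \ref{IsomMultOpDescBtwnTwoMeasSp} for right modules without change.

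First I would treat relative flatness. Given a relatively admissible monomorphism $i\in\operatorname{Hom}_{\mathscr{L}^{\operatorname{op}}}(X,Y)$, that is a topologically injective morphism, Theorem \ref{TopInjMultOpDescBtwnTwoMeasSp} supplies a topologically surjective left inverse $\pi$ with $\pi i=1_X$. Applying $-\otimes^A 1_F$ and using functoriality gives $(\pi\otimes^A 1_F)(i\otimes^A 1_F)=1_{X\otimes^A F}$, so $i\otimes^A 1_F$ admits the bounded left inverse $\pi\otimes^A 1_F$ and is therefore topologically injective with constant $\|\pi\otimes^A 1_F\|^{-1}$.

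Next I would handle the metric and extreme cases simultaneously. Here $i$ is isometric, and Theorem \ref{IsomMultOpDescBtwnTwoMeasSp} gives a strictly coisometric, hence contractive, left inverse $\pi$ with $\pi i=1_X$. Since the module projective tensor product sends contractions to contractions, both $i\otimes^A 1_F$ and $\pi\otimes^A 1_F$ are contractive while their composite is the identity; the usual sandwich
$$
\|u\|=\|(\pi\otimes^A 1_F)(i\otimes^A 1_F)(u)\|\leq\|(i\otimes^A 1_F)(u)\|\leq\|u\|
$$
then forces $i\otimes^A 1_F$ to be isometric, which covers both metric and extreme flatness at once.

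The only delicate points, which I would state rather than recompute, are the two standard properties of the module projective tensor product over a Banach algebra on which the argument rests: that $-\otimes^A 1_F$ is a functor, so that it respects the factorisation $\pi i=1_X$, and that it carries contractions to contractions. The main conceptual step, and the only place where the earlier analysis is genuinely used, is the appeal to the splitting Theorems \ref{TopInjMultOpDescBtwnTwoMeasSp} and \ref{IsomMultOpDescBtwnTwoMeasSp}; once the coretraction is in hand the tensor functor does the rest automatically, so I do not expect any serious obstacle beyond correctly transporting the right-module descriptions.
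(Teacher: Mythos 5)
Your proposal is correct and follows essentially the same route as the paper: split the admissible monomorphism $i$ via Theorems \ref{TopInjMultOpDescBtwnTwoMeasSp} and \ref{IsomMultOpDescBtwnTwoMeasSp}, apply the functor $-\mathop{\operatorname{\otimes}}^{B(\Omega)} 1_F$, and use functoriality plus submultiplicativity of the module tensor product to get topological injectivity in the relative case and the isometric sandwich $\Vert u\Vert\leq\Vert(i\otimes 1_F)(u)\Vert\leq\Vert u\Vert$ in the metric/extreme cases. In fact your citation of Theorem \ref{IsomMultOpDescBtwnTwoMeasSp} for the contractive left inverse in the metric/extreme case is the correct one, silently repairing the paper's own slip of re-citing Theorem \ref{TopInjMultOpDescBtwnTwoMeasSp} there.
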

\begin{proof}
1) Relative flatness. Let $F,X,Y\in\operatorname{Ob}(\mathscr{L})$. Take arbitrary  relatively admissible monomorphism $i\in\operatorname{Hom}_{\mathscr{L}^{\operatorname{op}}}(X, Y)$. By theorem \ref{TopInjMultOpDescBtwnTwoMeasSp} we have topologically surjective $\pi\in\operatorname{Hom}_{\mathscr{L}^{\operatorname{op}}}(Y, X)$ such that $\pi i=1_{X}$. Then for arbitrary $u\in F\mathop{\operatorname{\otimes}}^{B(\Omega)} X$
we have
$$
\Vert(1_F \mathop{\operatorname{\otimes}}^{B(\Omega)} \pi)\Vert\Vert(1_F \mathop{\operatorname{\otimes}}^{B(\Omega)} i)(u)\Vert_{F\mathop{\operatorname{\otimes}}^{B(\Omega)} Y}
\geq 
\Vert(1_F \mathop{\operatorname{\otimes}}^{B(\Omega)} \pi)(1_F \mathop{\operatorname{\otimes}}^{B(\Omega)} i)(u)\Vert_{F\mathop{\operatorname{\otimes}}^{B(\Omega)} X}
=
\Vert(1_F \mathop{\operatorname{\otimes}}^{B(\Omega)} \pi i)(u)\Vert_{F\mathop{\operatorname{\otimes}}^{B(\Omega)} X}
$$
$$
=\Vert(1_F \mathop{\operatorname{\otimes}}^{B(\Omega)} 1_X)(u)\Vert_{F\mathop{\operatorname{\otimes}}^{B(\Omega)} X}
=\Vert u\Vert_{F\mathop{\operatorname{\otimes}}^{B(\Omega)} X}
$$
Also note that $\Vert(1_F \mathop{\operatorname{\otimes}}^{B(\Omega)} \pi)\Vert\leq\Vert 1_F\Vert\Vert\pi\Vert$, hence
$$
\Vert(1_F \mathop{\operatorname{\otimes}}^{B(\Omega)} i)(u)\Vert_{F\mathop{\operatorname{\otimes}}^{B(\Omega)} Y}
\geq
\Vert\pi\Vert^{-1}\Vert u\Vert_{F\mathop{\operatorname{\otimes}}^{B(\Omega)} X}
$$
Thus $1_F \mathop{\operatorname{\otimes}}^{B(\Omega)} i$ is topologically injective, so $F$ is relatively flat.

2) Metric/extreme projectivity. Let $F,X,Y\in\operatorname{Ob}(\mathscr{L})$. Take arbitrary metrically/extremely admissible monomorphism $i\in\operatorname{Hom}_{\mathscr{L}^{\operatorname{op}}}(X, Y)$. By theorem \ref{TopInjMultOpDescBtwnTwoMeasSp} we have coisometric $\pi\in\operatorname{Hom}_{\mathscr{L}^{\operatorname{op}}}(Y, X)$ such that $\pi i=1_{X}$. Fix $u\in F\mathop{\operatorname{\otimes}}^{B(\Omega)} X$. Since $\pi$ is cosiometric then from previous paragraph we get
$$
\Vert(1_F \mathop{\operatorname{\otimes}}^{B(\Omega)} i)(u)\Vert_{F\mathop{\operatorname{\otimes}}^{B(\Omega)} Y}
\geq
\Vert\pi\Vert^{-1}\Vert u\Vert_{F\mathop{\operatorname{\otimes}}^{B(\Omega)} X}
\geq
\Vert u\Vert_{F\mathop{\operatorname{\otimes}}^{B(\Omega)} X}
$$
On the other hand for the same $u$ we have
$$
\Vert(1_F \mathop{\operatorname{\otimes}}^{B(\Omega)} i)(u)\Vert_{F\mathop{\operatorname{\otimes}}^{B(\Omega)} Y}
\leq
\Vert 1_F \mathop{\operatorname{\otimes}}^{B(\Omega)} i\Vert\Vert u\Vert_{F\mathop{\operatorname{\otimes}}^{B(\Omega)} X}
\leq
\Vert 1_F\Vert\Vert i\Vert
\Vert u\Vert_{F\mathop{\operatorname{\otimes}}^{B(\Omega)} X}
=
\Vert u\Vert_{F\mathop{\operatorname{\otimes}}^{B(\Omega)} X}
$$
From these inequalities it follows that $1_F \mathop{\operatorname{\otimes}}^{B(\Omega)} i$ is isometric.
\end{proof}

Norbert Nemesh, Faculty of Mechanics and Mathematics, Moscow State University, Moscow 119991 Russia

\textit{E-mail address:} nemeshnorbert@yandex.ru


\begin{thebibliography}{999}
\bibitem{RoyJ}
\textit{Roy A. Johnson} Atomic and nonatomic measures —  Proc. Amer. Math. Soc. 25 (1970), 650-655. 
\bibitem{SierpW}
\textit{Sierpinski, W.} (1922) "Sur les fonctions d'ensemble additives et continues. — Fundamenta Mathematicae 3: 240–246.
\bibitem{RoyJ2}
\textit{Roy A. Johnson} On Lebesgue decomposition theorem — Proc. Amer. Math. Soc. 18 (1967), 628-632.
\bibitem{AlbiacKalton}
\textit{Fernando Albiac, Nigel J. Kalton} Topics in Banach space theory. — Springer Inc. 2006.
\bibitem{HelFA}
\textit{Helemskii A. Ya.} Lectures and Exercises on Functional Analysis   — AMS, 2006.
\bibitem{HelLp}
\textit{Helemskii A. Ya.} Tensor products and multiplicators of modules $L_p$ on locally compact spaces. - Preprint, 2012.
\bibitem{HelMetFrPoj}
\textit{Helemskii A. Ya.} Metric freedom and projectivity for classic and quantum normed modules. - Mat. Sb., 204:7 (2013), 127–158 
\end{thebibliography}
\end{document}